\numberwithin{equation}{section}
\newtheorem{theorem}{Theorem}[section]
\newtheorem{proposition}[theorem]{Proposition}
\newtheorem{remark}{Remark}[section]
\newtheorem{corollary}{Corollary}[section]
\newcommand{\x}{{{x}}}
\newcommand{\xx}{{y}}
\renewcommand{\d}{\mathop{}\!\mathrm{d}}
\newcommand{\D}{\Omega}
\newcommand{\DC}{\widetilde{\D}}
\newcommand{\DI}{\D_{I}}
\newcommand{\DD}{\D\cup\DI}
\newcommand{\R}{\mathbb{R}}
\newcommand{\NM}{\mathcal{N}}
\newcommand{\kernel}{\gamma}
\newcommand{\kerneld}{\gamma(\x,\xx)}
\newcommand{\V}{{V}}
\newcommand{\Va}{\V_A}
\newcommand{\Vao}{\V_{A_0}}
\newcommand{\Vb}{\V_B}
\newcommand{\Sh}{\widetilde{S}_h}
\newcommand{\Sho}{S_h}
\newcommand{\M}{M}
\renewcommand{\tt}{\tau}
\newcommand*{\norms}[1]{{\left|{#1}\right|}}
\newcommand*{\norm}[1]{{\left\lVert#1\right\rVert}}
\newcommand*{\duala}[1]{{\left\langle{#1}\right\rangle}_{\Va^\prime\times\Va}}
\newcommand*{\dualar}[1]{{\left\langle{#1}\right\rangle}_{\Va\times\Va^\prime}}
\newcommand*{\dual}[1]{{\left\langle{#1}\right\rangle}}
\newcommand{\cker}{c_{\kernel}}
\newcommand{\ckerr}{C_{\kernel}}
\newcommand{\ckerrgrad}{\hat{C}_{\kernel}}
\newcommand{\cpot}{c_{F}}
\newcommand{\param}{\xi}
\newcommand{\spann}{\mathrm{span}}
\newcommand{\G}{\mathcal{G}}
\newcommand{\inp}[1]{\overline{#1}_\tt}
\newcommand{\inpl}[1]{\underline{#1}_\tt}
\newcommand{\inpt}[1]{\hat{#1}_\tt}
\newcommand{\inppm}[1]{\overline{#1}_{\pm,\tt}}
\newcommand{\inpp}[1]{\overline{#1}_{+,\tt}}
\newcommand{\inpm}[1]{\overline{#1}_{-,\tt}}
\newcommand{\intm}[1]{\overline{#1}}
\newcommand{\Q}{Q}
\newcommand{\QC}{\widetilde{Q}}
\newcommand{\J}{J_k}
\DeclareMathOperator*{\esssup}{ess\,sup}
\begin{document}
\title[Nonlocal Cahn-Hilliard model permitting sharp interfaces]{On a nonlocal Cahn-Hilliard model\\ permitting sharp interfaces}
\author[O. Burkovska]{Olena Burkovska${}^{1}$}
\address{${}^1$ Computer Science and Mathematics Division, Oak Ridge National Laboratory, One Bethel Valley Road, TN 37831, USA}
\email{burkovskao@ornl.gov}
\author[M. Gunzburger]{Max Gunzburger${}^2$}
\address{${}^2$
Department of Scientific Computing, Florida State University, 400 
Dirac Science Library\\ Tallahasse, FL 32306-4120, USA\\
and the Oden Institute for Computer Engineering and Sciences, University of Texas at Austin\\ Austin, TX 78712, USA}
\email{mgunzburger@fsu.edu}

\thanks{This material is based upon work supported by the U.S. Department of Energy, Office of Advanced Scientific Computing Research, Applied Mathematics Program under the award numbers ERKJ345 and ERKJE45; and was performed at the Oak Ridge National Laboratory, which is managed by UT-Battelle, LLC under Contract No. De-AC05-00OR22725. 
U.S.\ Government retains a non-exclusive, royalty-free license to publish or reproduce
the published form of this contribution, or allow others to do so, for U.S.\ Government
purposes. 
MG was also supported by U.S. Department of Energy grant DE-SC0021077 as part of the AEOLUS Multifaceted Mathematics Integrated Capability Center.
}

\begin{abstract}
A nonlocal Cahn--Hilliard model with a nonsmooth potential of double-well obstacle type that promotes sharp interfaces in the solution is presented. To capture long-range interactions between particles, a nonlocal Ginzburg-Landau energy functional is defined which recovers the classical (local) model as the extent of nonlocal interactions vanish. In contrast to the local Cahn--Hilliard problem that always leads to diffuse interfaces, the proposed nonlocal model can lead to a strict separation into pure phases of the substance. Here, the lack of smoothness of the potential is essential to guarantee the aforementioned sharp-interface property. Mathematically, this introduces additional inequality constraints that, in a weak formulation, lead to a coupled system of variational inequalities which at each time instance can be restated as a constrained optimization problem. We prove the well posedness and regularity of the semi-discrete and continuous in time weak solutions, and derive the conditions under which  pure phases are admitted. Moreover, we develop discretizations of the problem based on finite element methods and implicit-explicit time stepping methods that can be realized efficiently. Finally, we illustrate our theoretical findings through several numerical experiments in one and two spatial dimensions that highlight the differences in features of local and nonlocal solutions and also the sharp interface properties of the nonlocal model.
\end{abstract}

\keywords{nonlocal Cahn-Hilliard model, variational inequality, well-posedness, regularity, finite elements}
\subjclass[2010]{45K05; 35K55; 35B65; 49J40; 65M60; 65K15}

\maketitle

\section{Introduction}

The Cahn-Hilliard model was proposed in~\cite{cahn1958} as the model to describe phase separation of a binary alloy. 
Since then, the model and its variants have been widely used in different areas of science such as, e.g., tumor growth, image segmentation and copolymer melts, cf.~\cite{bertozzi2007,fritz2019brinkman,garke2016,oden2010,Ohta1986,wise2008}.

Given a bounded domain $\D\subset\R^n$, $n\leq 3$, and a fixed final time $T>0$, the model is described by the coupled system of equations
\begin{align}
\begin{cases}
\partial_t u-\nabla\cdot(\sigma(u)\nabla w)=0,\\
w=-\varepsilon^2\upDelta u+F^{\prime}(u),
\end{cases}\quad\text{in }\;\; (0,T)\times\D,\label{eq:main}
\end{align}
{together with appropriate boundary conditions. Here,} $u$ is an {\it order parameter} taking the values in $[-1,1]$ and is related to the concentration of a substance, $w$ is a {\it chemical potential}, $F(u)$ is a {\it double-well potential}, $\sigma(\cdot)$ is {\it the mobility}, and $\varepsilon>0$ is the {\it interface parameter}, which is proportional to the thickness of the interface. The double-well potential promotes pure phases and attains its minimum close to pure phases, i.e., when $u=\pm 1$. Typically, a regular potential is employed in~\eqref{eq:main}, given by the fourth-order polynomial
\begin{align}
F(u):={\frac{\cpot}{4}}(u^2-1)^2,\quad{\cpot>0}.\label{potential_regular}
\end{align}
Being the simplest choice, the regular potential~\eqref{potential_regular} is not physically realistic, since it 
may provide a non-feasible solution $|u|>1$. In practice it serves as an approximation of the more complex, but physically more relevant, logarithmic potential {for $ u\in(-1,1)$}
\begin{align}
F(u)&:={\frac{\theta}{2}}((1+u)\log(1+u)+(1-u)\log(1-u))-{\frac{\cpot}{2}} u^2,\;\, {0<\theta<\cpot},\label{potential_log}
\end{align}
or the obstacle potential
\begin{align}
F(u)&:=\left.
\begin{cases}
F_0(u) & \text{if } |u|\leq 1\\
+\infty &\text{if } |u|> 1
\end{cases}
\right\} = F_0(u)+I_{[-1,1]}(u),\label{potential_obstacle}
\end{align}
where $I_{[-1,1]}$ is the convex indicator function of the admissible range $[-1,1]$ and $F_0(u)={\cpot}/{2}(1- u^2)$, $\cpot>0$. In contrast to~\eqref{potential_regular}, the logarithmic potential~\eqref{potential_log} and obstacle potential~\eqref{potential_obstacle} always provide a solution within an admissible range $|u|\leq 1$. However, the logarithmic potential does not allow $u$ to attain pure phases, i.e., $u\in (-1,1)$, whereas the obstacle potential promotes pure states in the model, i.e., $u=1$ or $u=-1$.

Mathematically, the Cahn-Hilliard model~\eqref{eq:main} can be derived as {the} $H^{-1}$-gradient flow of the Ginzburg-Landau energy 
\begin{equation}
\mathcal{E}(u)=\frac{\varepsilon^2}{2}\int_{\D}|\nabla u|^2\d\x+\int_{\D}F(u)\d\x.
\label{GL_local}
\end{equation}
The gradient square term in~\eqref{GL_local} represents short-range interactions between particles. To account for long-term interactions, {one can consider a nonlocal variant of the Cahn-Hilliard problem~\eqref{eq:main}:
\begin{align}
\begin{cases}
\partial_t u-\nabla\cdot(\sigma(u)\nabla w)=0,\\
w=Bu+F^{\prime}(u),
\end{cases}\quad\text{in }\;\; \D\times (0,T),\label{CH_nonlGL}
\end{align}
where $B$ is a nonlocal diffusion operator.}
In contrast to the derivation of the local counterpart~\eqref{eq:main}, {Giacomin and Lebowitz in~\cite{Giacomin1997}} (see also~\cite{Giacomin1998}) provide a rigorous microscopic derivation of the nonlocal model~\eqref{CH_nonlGL} {in the case when $\D\equiv\mathbb{T}^n$ is set to be the $n$-dimensional torus}. More specifically, the macroscopic continuum model is obtained via a hydrodynamic limit of particle models that are dynamic versions of lattice gases undergoing long interactions. 
{In this case the nonlocal operator $B=\mathcal{B}_{\mathbb{T}}$ is set to
\begin{equation*}
\mathcal{B}_{\mathbb{T}}u(\x)=\int_{\mathbb{T}^n}(u(\x)-u(\xx))\kernel(\x-\xx)\d\xx=\cker u(\x)-(\kernel*u)(\x),\quad\forall\x\in\mathbb{T}^n,
\end{equation*}
where} $\kernel\colon \mathbb{T}^n\to \R$ is {an integrable} convolution kernel that sets up the law for the nonlocal interactions. The kernel is assumed to depend only on the distance \(|x-y|_{\mathbb{T}^n}\) on the torus, the convolution is defined as $(\kernel*u)(\x)=\int_{\mathbb{T}^n}u(\xx)\kernel(\x-\xx)\d\xx$, and $\cker =\int_{\mathbb{T}^n}\kernel(\xx)\d\xx=(\kernel*1)(\x)$, which is independent of $\x$. The corresponding nonlocal free energy functional has the form
\begin{equation*}
\mathcal{E}(u)=\frac{1}{4}\int_{\mathbb{T}^n}\int_{\mathbb{T}^n}(u(\x)-u(\xx))^2\kernel(\x-\xx)\d\x\d\xx+\int_{\mathbb{T}^n}F(u)\d\x.
\end{equation*}
We note that the local model can be considered as an approximation of the nonlocal model for vanishing nonlocal interactions (cf., e.g.,~\cite{davoli2019convective,du2018CH} and references cited therein).
\begin{equation}
\cker u-\kernel*u 
\approx-\varepsilon^2\upDelta u\quad\text{with }\;\;\varepsilon^2=\frac{1}{{2n}}\int_{{\R^n}}\kernel(\xi)|\xi|^2\d\xi.\label{local_nonlocal_approx}
\end{equation}

An adoption of the Giacomin-Lebowitz model to the case when $\D\subset\R^n$ is a bounded domain and not a torus is possible by setting { $B=\mathcal{B}_{\mathcal{R}}$ in~\eqref{CH_nonlGL}}, where
\begin{equation}
\mathcal{B}_{\mathcal{R}}u(\x)=\int_{\D}(u(\x)-u(\xx))\kernel(\x-\xx)\d\xx=\cker(\x)u(\x)-(\kernel*u)(\x),\quad\forall\x\in\D. \label{nonl_oper_regional}
\end{equation}
Here, the kernel \(\gamma\colon \R^n \to \R\) is no longer defined on the torus (which corresponds to a periodic function on \(\R^n\)), but often depends on the distance in \(\R^n\), which implies that $c_\kernel(x) = (\kernel*1)(\x)=\int_{\D}\kernel(\x-\xx)\d\xx$ is no longer constant. The operator $\mathcal{B}_{\mathcal{R}}$ is often referred to as the \textit{regional} nonlocal operator, since it restricts the nonlocal interactions to the region $\D$, and has been {one of the most frequently} analyzed  in the context of nonlocal Cahn-Hilliard models; see, e.g.,~\cite{bates2005neumann,colli2012CH_NS,colli2007,davoli2019convective,gajewski2003,
gal_nonlocal_2017,gal2014longtime} and the references cited therein. While most of these works address integrable kernels, which is also a subject of the current work, singular kernels have also been investigated; see, e.g.,~\cite{coli2018,davoli2019}.
For an overview of early and recent references and extensions of the {local and} nonlocal Cahn-Hilliard model, we refer interested readers to~\cite{bates2006survey,miranville2017review,DuFeng2020}. 
In terms of the notation, the present modification has only minor differences to~\eqref{CH_nonlGL}. However, it has direct implications on the properties of the solutions, as will become apparent shortly. 

In this work, we are interested in the model~\eqref{CH_nonlGL} posed on a bounded domain~$\D$ with $\cker$ being constant throughout $\D$. For this, we consider {$B=\mathcal{B}_{\mathcal{N}}$ to be the following nonlocal operator:}
\begin{equation}
\mathcal{B}_{\mathcal{N}}u(\x)=\int_{\DD}(u(\x)-u(\xx))\kernel(\x-\xx)\d\xx=\cker u(\x)-(\kernel*u)(\x),\quad\forall\x\in\D,\label{nonl_oper_neumann}
\end{equation}
where $\cker=\int_{\DD}\kernel(\x-\xx)\d\xx$ for all $\x\in\D$ and $(\kernel*u)(\x)=\int_{\DD}u(\xx)\kernel(\x-\xx)\d\xx$. Here, $\DI$ is to as the {\em nonlocal interaction domain}, that represents the extent of nonlocal interactions outside of $\D$, and is defined as
\[
\DI:=\{\xx\in\R^n\setminus\D\colon \kerneld\neq 0,\;\; \x\in\D\}.
\]
To define~\eqref{nonl_oper_neumann}, one must know the values of $u$ also on the interaction domain $\DI$. Usually, the local Cahn--Hilliard model~\eqref{eq:main} is complemented with the homogeneous Neumann boundary condition (to guarantee the mass-conservation property), and in the nonlocal setting considered here we impose a homogeneous nonlocal flux condition for $u$ on $\DI$, which is analogous of the Neumann boundary conditions in the local setting:
\begin{equation}
\mathcal{N}u(\x)=\int_{\DD}(u(\x)-u(\xx))\kernel(\x-\xx)\d\xx =0 \quad\forall\x\in\DI.\label{eq:nonlocal_flux}
\end{equation}
The type of the nonlocal operator $\mathcal{B}_{\mathcal{N}}$~\eqref{nonl_oper_neumann} has gained significant attention recently in the context of various applications, cf.~\cite{DElia2020Acta,Dunonlocal2012,Du2019book} and references cited therein. In contrast to~\eqref{nonl_oper_regional}, the nonlocal interactions are not limited to $\D$ and occur in all of $\DD$. If  the kernel has infinite support then $\DD\equiv\R^n$, and the nonlocal interactions occur on the whole of $\R^n$.

\subsection{Model setting}

Finally, we present the model that encompasses both formulations. We denote $\DC$ to be either $\D$ or $\DD$, and we employ a non-smooth obstacle potential~\eqref{potential_obstacle}, and, for simplicity, consider  a constant mobility $\sigma(u)\equiv 1$. While the problem formulation~\eqref{CH_nonlGL} is appropriate for smooth potentials, the non-smooth obstacle potential~\eqref{potential_obstacle} {requires us to introduce} the concept of subdifferentials to define the derivative of $I_{[-1,1]}$. In this case, $F^\prime(u)$ should be replaced
by a generalized differential of $F$, $\partial F(u)=-\cpot u+\partial I_{[-1,1]}(u)$, where $\partial I_{[-1,1]}(u)$ is the subdifferential of the indicator function,
\begin{equation}
\partial I_{[-1,1]}(u)=\begin{cases}
(-\infty,0] & \text{if } u=-1,\\
0 &\text{for } u\in(-1,1),\\
[0,+\infty) &\text{if } u=1.
\end{cases}\label{subdiff}
\end{equation}
Finally, the nonlocal model we are interested to study becomes
\begin{align}
\begin{cases}
\partial_t u-\upDelta w=0,\\
w=\xi u-\kernel*u+\lambda,\quad \lambda\in\partial I_{[-1,1]}(u), 
\end{cases}\label{nonl_subdiff}
\end{align}
where $\xi(\x):=\int_{\DC}\kernel(\x-\xx)\d\xx-\cpot$ and $\kernel*u=\int_{\DC}u(\xx)\kernel(\x-\xx)\d\xx$. Here, we will already impose that $\xi(\x)\geq 0$ on $\D$, which is required to obtain a well-posed problem. We will show that $\xi$ is an appropriate interface parameter in the nonlocal model. We complement~\eqref{nonl_subdiff} with the local and nonlocal flux conditions
\begin{equation}
\frac{\partial w}{\partial n}=0\quad\text{on}\;\;\partial\D,\quad\text{and}\quad\mathcal{N}u(\x)=\int_{\DC}(u(\x)-u(\xx))\kernel(\x-\xx)\d\xx=0 \quad\text{on }\;\;\DC\setminus\D.\label{bc_nonlocal}
\end{equation}
We note that, in the variational form the problem~\eqref{nonl_subdiff} can be restated as a coupled system of equations involving variational inequalities. 
The corresponding nonlocal free energy is given as
\begin{align}
\mathcal{E}(u)=&\ \frac{1}{4}\int_{\DC}\int_{\DC}(u(\x)-u(\xx))^2\kernel(\x-\xx)\d\x\d\xx+\int_{\D}F(u)\d\x
\label{GL_nonl}
\end{align}
where $F$ is defined in~\eqref{potential_obstacle}.

\subsubsection*{Sharp interfaces}
The choice of the obstacle potential in~\eqref{nonl_subdiff} is not only physically motivated, but brings an interesting insight into the mathematical properties of the solution, such as the occurrence of sharp interfaces. 
In particular, as one of the main contributions of this work, we demonstrate that the solution of~\eqref{nonl_subdiff} is discontinuous and can admit sharp interfaces for some non-trivial and \textit{non-vanishing} nonlocal interactions. On the contrary, the local problem~\eqref{eq:main}, which is a diffuse interface model, does not {allow the appearance of} sharp interfaces, other than in the limiting case $\varepsilon\to 0$ which corresponds to \textit{vanishing} local interactions in~\eqref{GL_local}. 
In Figure~\ref{fig:intro} we illustrate the appearance of sharp interfaces for the local and nonlocal solutions of~\eqref{eq:main} and~\eqref{nonl_subdiff} with the obstacle potential~\eqref{potential_obstacle}. 
Here, we compare the solutions obtained with a nonlocal model~\eqref{nonl_subdiff} for $\xi\approx 0$ to the local model~\eqref{eq:main}, where we have replaced $\mathcal{B}_{\mathcal{N}}$ with the Neumann Laplacian $-\varepsilon^2\upDelta$ according to~\eqref{local_nonlocal_approx}. We point out that, in contrast to the local model, we do not need to perform the limit to obtain sharp interfaces in the nonlocal solution. Moreover, the sharp interface case $\xi=0$ for the nonlocal model is well-posed and can be solved numerically with the same time-dsicretization as for $\xi>0$, as we will discuss below.

\begin{figure}
\begin{subfigure}{.36\textwidth}
  \centering
  \includegraphics[width=\textwidth]{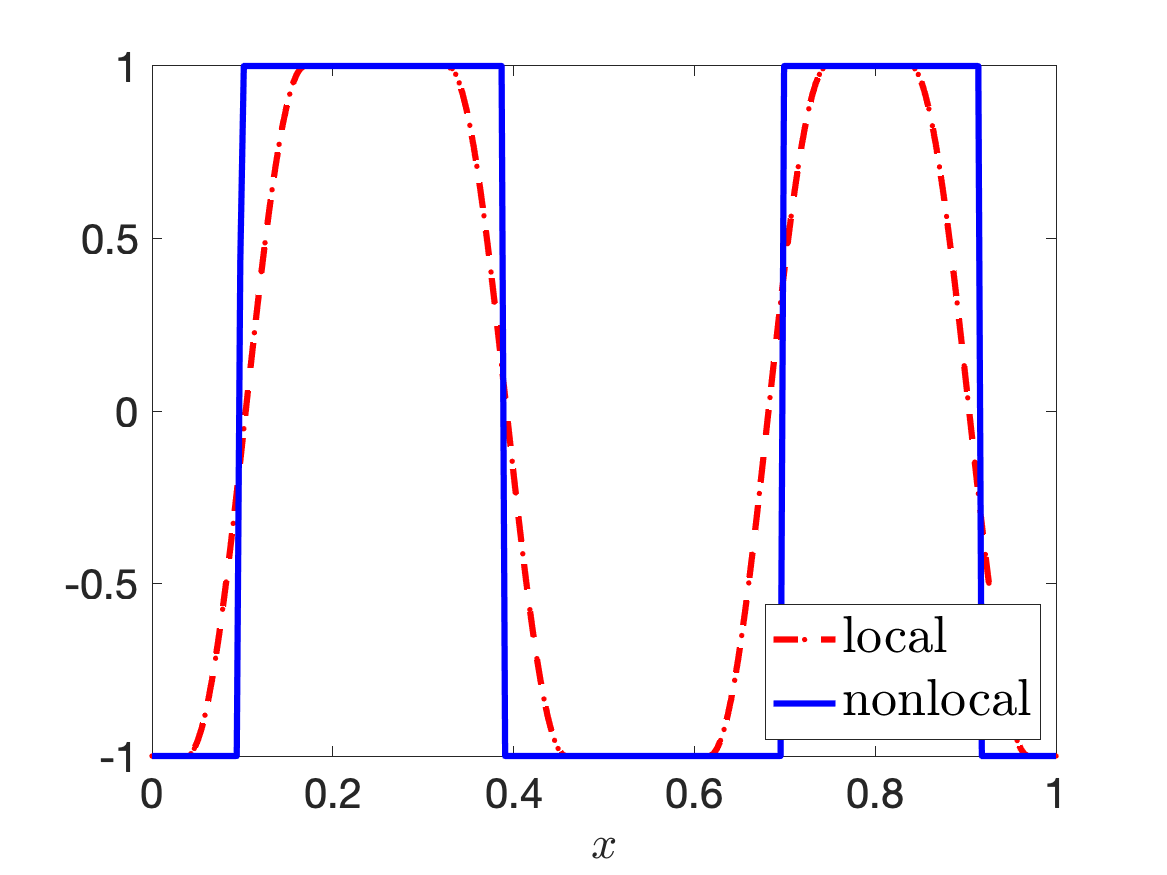}
  \caption{comparison in 1D}
 \end{subfigure}
 \begin{subfigure}{.3\textwidth}
  \centering
    \includegraphics[width=1.2\textwidth]{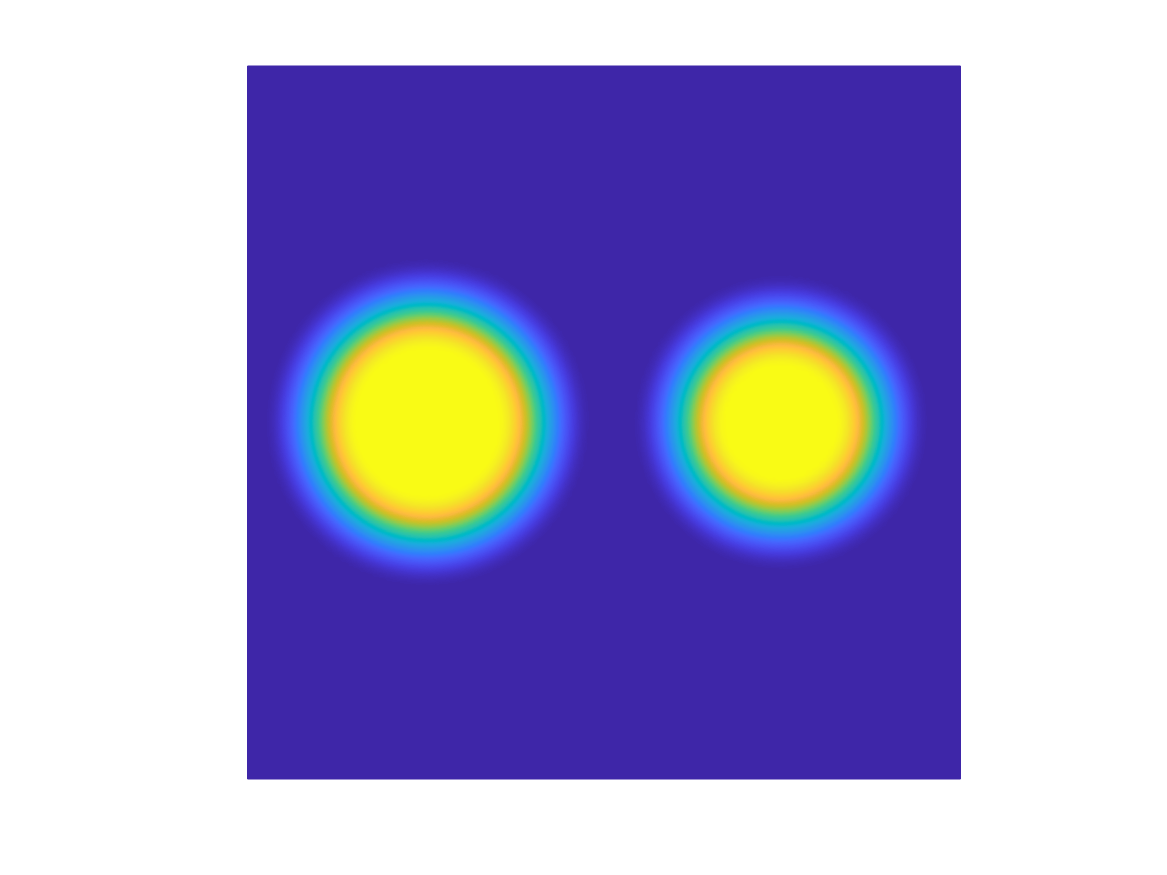}
  \caption{local solution (2D)}
 \end{subfigure}
 \begin{subfigure}{.3\textwidth}
  \centering
     \includegraphics[width=1.2\textwidth]{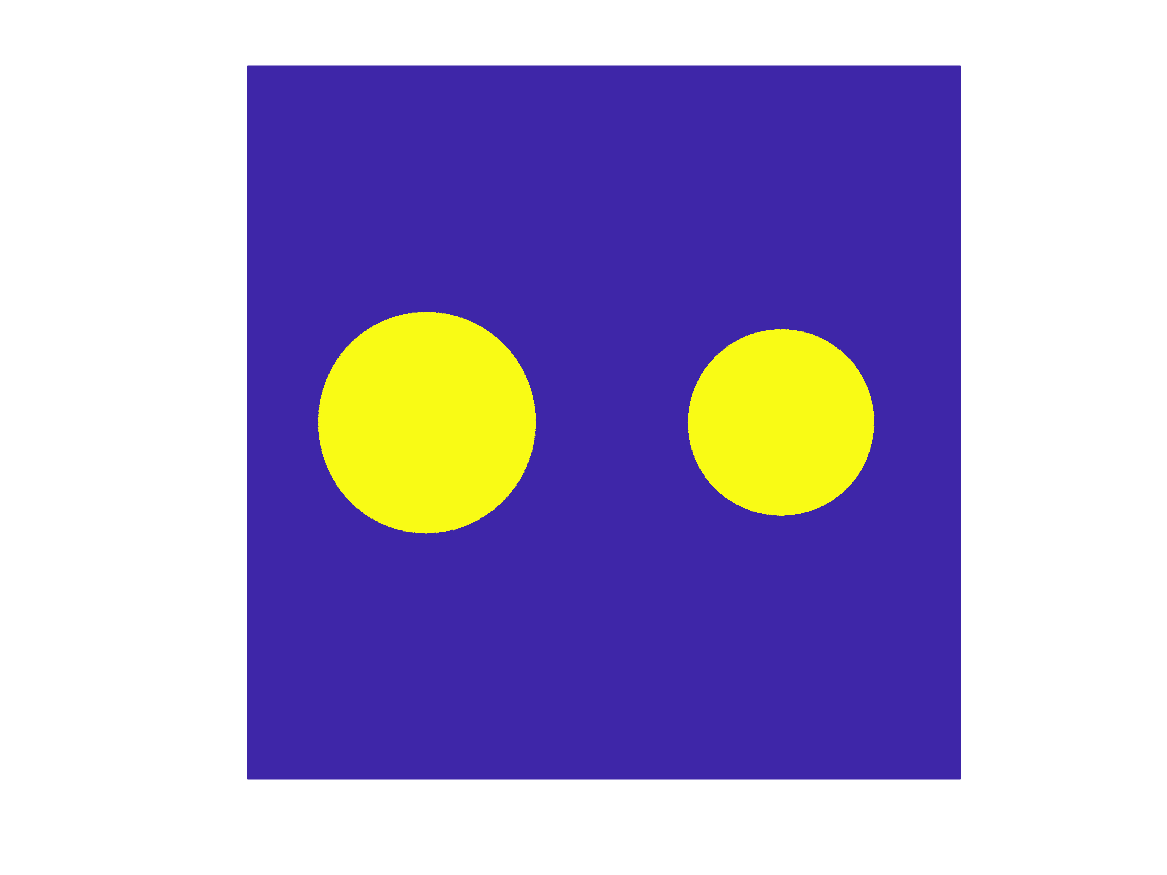}
  \caption{nonlocal solution (2D)}
 \end{subfigure}
 \caption{Comparison of the local and nonlocal solutions in one- and two-dimensions. The local and nonlocal solutions are initialized with the same initial condition, \(\cpot\), and plotted at the same time \(T\). For the nonlocal model we consider the nonlocal operator~\eqref{nonl_oper_neumann} with Gaussian type kernel~\eqref{kernel_gaussian}. The interface parameter \(\varepsilon\) is given by~\eqref{local_nonlocal_approx}, and with this choice the local model corresponds to the local limit of the nonlocal model for vanishing radius of {nonlocal} interactions.}\label{fig:intro}
 \end{figure} 

Mathematically, the appearance of sharp interfaces in the nonlocal model~\eqref{nonl_subdiff} is explained by realizing that the solution of~\eqref{nonl_subdiff} for $\xi>0$ can be obtained as a pointwise projection of $g=w+\kernel*u$ onto the admissible set,
\[
u(t) = P_{[-1,1]}\left\{\frac{1}{\xi}g(t)\right\}\quad\text{in }\;\;\D\times (0,T),\quad\xi>0.
\]
Note that, for $\xi=0$, the above turns into {(see also Theorem~\ref{thm:sharp_interfaces_t})}
\begin{equation*}
u(t) \in \operatorname{sign}\{g(t)\}{=\begin{cases}
\{1\}& \text{if } g(t)>0,\\
\{-1\}& \text{if } g(t)<0,\\
[-1,1]& \text{if } g(t)=0,\end{cases}}\quad\text{in }\;\;\D\times (0,T).\label{sharp_interf_intro}
\end{equation*}
{If} the level set of $\{g=0\}$ is of zero measure, this implies that the solution can admit only pure phases $u=1$ or $u=-1$. 
It becomes apparent that $\xi$ is not constant for the case of the regional operator~\eqref{nonl_oper_regional}, since $\cker$ is variable in $\D$. {Hence}, in general the corresponding solution does not posses sharp interfaces, unless the obstacle potential is equipped with a spatially-dependent $\cpot(\x)$.

\subsection{Contribution and related works}
One of the contributions of this work is to propose a well-posed nonlocal model~\eqref{nonl_subdiff} that admits discontinuous solutions with only pure phases, as described in the previous section. To the best of our knowledge this is the first result in this direction. However, we note that the jump-discontinuous behaviour of the solution closely resembles the bang-bang control principle, where almost all values of the control function lie on the boundary of the admissible set; see, e.g.,~\cite{troltzsch2010optimal}. Indeed, a single time-step of the model~\eqref{nonl_subdiff} can be also understood as a variational inequality, which serves as a necessary optimality condition for a specific bang-bang optimal control problem.

{Moreover, discontinuous solutions for related Allen-Cahn phase transition models have already been {addressed} in, e.g.,~\cite{bates1997,fife1997,BatesChmaj1999,Chen1997,ChmajRen1999}; see also~\cite{FifeReview2003} and the references cited therein. They are based on the same Ginzburg-Landau energy as in~\eqref{GL_nonl} and described by the evolution equation
\begin{align}
\partial_t u+\cker u-\kernel*u+F^{\prime}(u)=0.\label{eq:AC}
\end{align}
 In more recent works~\cite{DuYang2016,du2019,DuYang2017}, the authors also investigated numerically and theoretically the relation between kernel parameters with finite range of nonlocal interactions and the appearance of discontinuities in the steady state solution of the Allen-Cahn equation. In~\cite{DuYang2016} it was reported that the condition under which these solutions may admit discontinuities is related to the same parameter $\xi=\cker-\cpot$, defined for a smooth potential $F$ as in~\eqref{potential_regular} with $\cpot=-\min_{u\in[-1,1]}F^{\prime\prime}(u)=1$. More specifically, for a regular potential these discontinuities can occur for $\xi<0$, which is only possible for the Allen-Cahn equation. However, even though the solution contains a discontinuity, it does not admit only pure phases. That is, $u$ does not take values  only in the set $\{-1,1\}$ and it does not jump from $1$ to $-1$ at a single point. On the other hand, using an obstacle potential in the Allen-Cahn setting can permit exclusively pure phases in the steady-state solutions, $u=\pm 1$. However, the evolution law~\eqref{eq:AC} still does not allow for the discontinuities to migrate over time; cf.~\cite{fife1997,FifeReview2003}. This is due to the fact that the time derivative of $u$ takes values in $L^2(\D)$ for the evolution law~\eqref{eq:AC}.  
In contrast, we prove that the solution of the nonlocal Cahn-Hilliard model with the obstacle potential~\eqref{nonl_subdiff} can admit jump-discontinuities with only pure phases in the solution profile not only at the steady-state but also during the whole time evolution.}

In addition to studying the properties of the solution, we provide a detailed analysis of the nonlocal problem~\eqref{nonl_subdiff}. First, we establish general well-posedness results for an appropriate time-discrete formulation of~\eqref{nonl_subdiff} ({see} Theorem~\ref{thm:existence_semidiscrete}--\ref{thm:uniqueness_semidiscrete}), where only
integrability is required for the kernel. Moreover, we show that under certain conditions the solution is given by an $L^2$-projection on the admissible set. This representation allows us to deduce improved regularity properties of the solution ({see} Theorem~\ref{thm:reg_tk}) and the appearance of sharp-interfaces for $\xi=0$ {(see Theorem~\ref{thm:properties_tk})}. We derive a convergence result towards a continuous in time formulation. As a consequence we obtain well-posedness of the time continuous problem ({see} Theorem~\ref{thm:existence_t}--\ref{thm:uniqueness_t}), including an existence result for $\xi=0$. The latter is noteworthy since most of the available results focus on the settings equivalent to $\xi(\x)\geq {\xi}_{\min}>0$, which does not cover the more delicate case of sharp interfaces, cf.~\cite{davoli2019convective,gal_nonlocal_2017}.
One of the works we are aware of that covers the case $\xi=0$ is~\cite{colli2007}, where a model similar to~\eqref{nonl_subdiff} is studied in an abstract setting. However, there well-posedness is derived in the time-continuous framework by performing a limit of an appropriate regularized problem. In contrast, we do not regularize the non-smooth potential, but exploit its limited regularity to characterize the fine properties of the solution, including sharp interfaces. 
For the analysis, we rely solely on the time discretization, which also serves as a basis for the numerical realization introduced here, including the case $\xi=0$.

In addition to the complete analysis of the model, we also discuss efficient space and time approximations of the nonlocal solution based on the finite element method which we complement with corresponding numerical results. Numerical analysis of the local Cahn-Hilliard problem has been an active subject of research; see, e.g.,~\cite{blank2011,blowey_elliott1991,blowey_elliott_1992,bosch2014,brenner2018}. {The approximation of the nonlocal diffuse interface problem has been addressed in various works, e.g.,~\cite{ainsworth2017,akagi2016,du2018CH,du2019}. A design of appropriate time inegration schemes have been investigated in~\cite{GuanWW2014,GuanLWW2014,GuanLW2017,du2018CH,du2019}. The aforementioned works focused mainly on smooth potentials. }An inclusion of the obstacle potential in the model leads to a non-smooth and nonlinear system involving variational inequalities. The solvability of such a system can be realized by the semi-smooth Newton method~\cite{kunisch2002} which has been addressed in the {local} Cahn-Hilliard setting in, e.g.,~\cite{blank2011,hintermuller2011CH}, and which we adopt here {in the nonlocal setting}. 
Lastly, due to the complex structure of the Cahn-Hilliard system, in general, there is great demand for the development of efficient approximation techniques based on, e.g., Krylov solvers~\cite{bosch2014} or model order reduction methods~\cite{grassle2018,grassle2019}. 

The paper is organized as follows. In Section~\ref{sec:prelim} we introduce appropriate functional settings and the variational formulation of the problem we consider. Section~\ref{sec:semidiscrete} is devoted to the well-posedness analysis of the semi-discrete problem, improved regularity, and the derivation of the sharp-interface condition for the corresponding solution. The respective results for the time-continuous problem formulation are given in Section~\ref{sec:continuous}. Then, in Section~\ref{sec:numerics} we discuss finite-element approximations together with time-marching schemes for the nonlocal problem and provide various numerical examples. Finally, we conclude the paper in Section~\ref{sec:conclusion}.


\section{Preliminaries}\label{sec:prelim}
We denote by $L^p(\D)$, $p\in[1,\infty]$, the usual Lebesgue spaces and by $W^{k,p}(\D)$, $k\in\mathbb{N}$, $1\leq p\leq\infty$, we mean the Sobolev space of all functions in $L^p(\D)$ having all distributional derivatives up to order $k$ and endowed with the norm $\norm{u}^p_{W^{p,k}(\D)}=\sum_{|\alpha|\leq k}\norm{D^\alpha u}_{L^p(\D)}^p$. We denote by $L^p(0, T ; X)$, $1\leq p\leq \infty$ the Bochner space of all measurable functions $u:[0, T]\to X$, for which the norms 
\begin{align*}
\norm{u}_{L^p(0,T;X)}=\ \left(\int_{0}^T\norm{u(t)}^p_{X}\d t\right)^{1/p}, \;\;p<\infty,\;\;\text{and}\;\;
\norm{u}_{L^{\infty}(0,T:X)}=\esssup_{t\in (0,T)}\norm{u(t)}_{X}
\end{align*}
are finite. If not specified, we often use the relation $a\leq C b$, with some generic constant $C > 0$ that may be different in every instance, but is independent of $a$ and $b$.

\subsection{Nonlocal operators}
We introduce a kernel function $\kernel:\R^n\to\R^+$, which is radial, has finite support and integrable, i.e.,
\begin{equation}
\begin{cases}
\begin{aligned}
&\kernel(\x)=\hat{\kernel}(|\x|),\;\; \kernel\in L^1(\R^n),\\
&\text{with }\;{\rm supp}(\hat{\kernel})\subset(0,\delta],\;\delta>0,\\
&\text{and there exists $\sigma>0$, such that}\; (0,\sigma)\subset{\rm supp}(\hat{\kernel}).
\label{kernel_cond1}
\end{aligned}\tag{H1}
\end{cases}
\end{equation}
While most of the results in this paper are valid under the above conditions, in certain cases, we will need to require a higher regularity of the kernel, i.e., 
\begin{equation}
\kernel\in W^{1,1}(\R^n).
\label{kernel_cond2}\tag{H2}
\end{equation}
In the following we also employ (by a slight abuse of notation) the two-point version of $\kernel$ given by
\[
\kerneld=\kernel(\x-\xx),\quad\forall\x,\xx\in\R^n, 
\]
which will be used as an integration kernel to define the nonlocal operator $B$. 

We define the nonlocal operator $B$ that encompasses the definitions of  the ``Neumann'' ({\it Case~1})~\eqref{nonl_oper_neumann} and ``regional''  ({\it Case~2})~\eqref{nonl_oper_regional} type of nonlocal operators
\begin{align}
Bu(\x)=\int_{\DC}(u(\x)-u(\xx))\kerneld\d\xx=\cker(\x) u(\x)- (\kernel*u)(\x),
\label{operatorB}
\end{align}
where $\DC=\D$ (``regional'') or $\DC=\DD$ (``Neumann''). For convenience of notation we suppose that $u$ is extended by zero outside of $\DC$, and $(\kernel*u)(\x)$ denotes the convolution on $\DC$ of $\kernel$ with $u$, and we define
\begin{align}
\cker(\x):=\int_{\DC}\kerneld\d\xx,\quad \ckerr:=\int_{\R^n}\kerneld\d\xx, \quad\text{ and }\;\;\ckerrgrad:=\norm{\nabla\kernel}_{L^1(\R^n)},\label{cker}
\end{align}
and note that $0\leq\cker(\x)\leq\ckerr<\infty$ for all $\x\in\D$. Since $\kernel$ has finite support, $\DC$ is a bounded domain, and for $\DC=\DD$, $\cker$ is a constant for all $\x\in\D$ and can be computed explicitly as $\cker=\frac{2\pi^{n/2}}{\Gamma(n/2)}\int_{0}^\delta|\xi|^{n-1}\hat{\kernel}(|\xi|)\d\xi$.

\subsection*{Nonlocal Neumann-type boundary conditions}
The problem with the ``Neumann'' type nonlocal operator for which $\DC=\DD$ and
 $B=\mathcal{B}_{\mathcal{N}}$~(see \eqref{nonl_oper_neumann}) implies that there is a nonlocal flux from $\DI$ to $\D$ prescribed in the model. From a probabilistic perspective in the context of particle-based models such boundary conditions provide information about what happens to a particle upon leaving a domain $\D$, whereas 
 $u(x,t)$ in this case represents the probability density function of the position of a particle subject to a jump-diffusion process.   
Such boundary conditions have been proposed in~\cite{Dunonlocal2012}. A characterization of this condition  in terms of L\'evy flights for unsteady fractional Laplace equation has been studied in~\cite{deng_boundary_2018}. Additional discussions about Neumann type boundary conditions and their variation in the nonlocal setting can be found in, e.g.,~\cite{Cortazar2007,cortazar2008approximate,MengeshaDu2015,
dipierro2017,tao2017nonlocal,DeliaTianYu2020,YouLuTraskYu2021}; see also~\cite{DElia2020Acta} and the references therein. In contrast, for the ``regional'' nonlocal operator $B=\mathcal{B}_{\mathcal{R}}$~\eqref{nonl_oper_regional} we have that $\DC=\D$ and $\DC\setminus\D=\emptyset$, which implies that there is no nonlocal interactions between $\D$ and $\mathbb{R}^n\setminus\D$, and, hence, no nonlocal flux condition such as~\eqref{eq:nonlocal_flux} needs to be imposed. In this case, from the probabilistic point of view the random process of the movement of the particle occurs solely whithin $\D$, and while in the ``Neumann'' case the particle is not allowed to leave $\DD$, here, 
the particle is not allowed to jump out of $\D$; this is referred to as a censored process.

\subsection*{Function spaces}
We set $\Va:=H^1(\D)$, which is endowed with the usual $H^1$-norm, $\norm{v}^2_{\Va}:=\norm{v}^2_{H^1(\D)}=|v|^2_{H^1(\D)}+\norm{v}^2_{L^2(\D)}$, and $|v|_{\Va}:=|v|_{H^1(\D)}=\norm{\nabla v}_{L^2(\D)}$. We also define the space of mean-free functions
\[\Vao:=\{v\in\Va\colon (v,1)_{L^2(\D)}=0\},\]
endowed with the same semi-norm as on $\Va$. Thanks to the local Poincar\'e inequality 
\begin{equation}
\norm{u}_{L^2(\D)}\leq\widetilde{C}_P\norms{u}_{\Va},\quad\forall u\in\Vao,\quad \widetilde{C}_P>0,\label{poincare_local}
\end{equation}
the seminorm on $\Vao$ also defines a norm. We denote by $\Va^\prime=(H^1(\D))^\prime$ the dual space of $\Va$, and by $\duala{\cdot,\cdot}$ a corresponding duality pairing, and for $f\in\Va^\prime$, we denote
\[
\norm{f}_{\Va^\prime}:=\sup_{\substack{v\in\Va \\ \norm{v}_{\Va}\neq0}}\frac{\duala{f,v}}{\norm{v}_{\Va}}. 
\]
We introduce the dual space of $\Vao$, denoted by $\Vao^{\prime}=(\Vao)^{\prime}$, which consists of functionals with mean-value zero:
\begin{equation*}
\Vao^{\prime}:=\left\{f\in\Va^{\prime}\colon \duala{f,1}=0\right\}.
\end{equation*}
We note that for any $f\in\Vao^\prime$, we have that $\norm{f}_{\Vao^\prime}=\norm{f}_{\Va^\prime}$. Indeed, for $v\in\Va$, such that $\norm{v}_{\Va}\neq 0$, we obtain
\begin{align*}
\norm{f}_{\Va^\prime}=\sup_{v\in\Va}\frac{|\duala{f,v}|}{\norm{v}_{\Va}}
=\sup_{v\in\Vao,\; M} \frac{|\duala{f,v+M}|}{(\norm{v}^2_{\Va}+M^2)^{1/2}}=\sup_{v\in\Vao}\frac{|\duala{f,v}|}{\norm{v}_{\Va}}.
\end{align*}

Let $\G$ denote a Green's operator for the inverse of the Laplacian with zero Neumann boundary conditions. That is, given $f\in\Va^\prime$, we define $\G f\in\Vao$ as the unique solution of 
\begin{align}
(\nabla\G f,\nabla v)_{L^2(\D)}&=\duala{f,v},\quad\forall v\in\Vao.
\label{greens_problem}
\end{align}
We note that, if $f\in\Vao^\prime$, then~\eqref{greens_problem} also holds true for all $v\in\Va$:
\begin{align*}
(\nabla\G f,\nabla v)_{L^2(\D)}&=\duala{f,v},\quad\forall v\in\Va.
\end{align*}
The existence and uniqueness of $\G f$ follows from the Poincar\'e inequality and the Lax-Milgram lemma, and it holds that
\begin{equation}
\norm{f}^2_{\Va^\prime}=\norms{\G f}^2_{\Va}=\norm{\nabla\G f}^2_{L^2(\D)}=\duala{f,\G f}.
\label{greens_prop1}
\end{equation}
If $f\in\Va^\prime\cap L^2(\D)$, then $\norm{f}^2_{\Va^\prime}=(\G f,f)_{L^2(\D)}$. Moreover, owing to~\eqref{greens_prop1}, it also holds that
\begin{equation}
\duala{f^\prime(t),\G f(t)}=\frac{1}{2}\frac{\d}{\d t}\norm{f(t)}^2_{\Va^\prime},\;\;\text{for a.e. } t\in(0,T), \;\;\forall f\in H^1(0,T;\Va^\prime).
\label{greens_prop2}
\end{equation}

For the nonlocal space $\Vb$ we distinguish between two cases:
\begin{equation*}
\Vb:=\begin{cases}
\left\{v\in L^2(\DD)\colon\NM v=0\;\text{on }\DI\right\},\quad&\text{for  \textit{Case~1}},\\
L^2(\D),&\text{for  \textit{Case~2}},
\end{cases}
\end{equation*}
where an inclusion of the nonlocal flux conditions in the function space setting in {\it Case~1} is important as it will become apparent {shortly}. 
We recall the generalized \textit{nonlocal Green's first identity}~\cite{Dunonlocal2012}:
\begin{multline}
(Bu,v)_{L^2(\D)}=\int_{\D}v(\x) \int_{\DC}(u(\x)-u(\xx))\kerneld\d\xx\\
=\frac{1}{2}\int_{\DC}\int_{\DC}(u(\x)-u(\xx))(v(\x)-v(\xx))\kerneld\d\xx\d\x+\int_{\DC\setminus\D}v(\x)\NM u(\x)\d\x.\label{Greens}
\end{multline}
Next, we define the inner product and semi-norm on $\Vb$:
\begin{align*}
(u,v)_{\Vb}&:=\frac{1}{2} \int_{\DC}\int_{\DC}(u(\x)-u(\xx))(v(\x)-v(\xx))\kerneld\d\x\d\xx,\quad \norms{v}^2_{\Vb}:=(v,v)_{\Vb},\
\end{align*}
and endow $\Vb$ with the norm $\norm{v}_{\Vb}^2:=\norms{v}_{\Vb}^2+\norm{v}^2_{L^2(\D)}$.
We also define a bilinear form:
\begin{equation}
b(u,v):=(u,v)_{\Vb}.
\label{bilform_b}
\end{equation}
\begin{proposition}
For all $u,v\in\Vb$ we obtain that 
\begin{equation}
b(u,v)=(\cker u,v)_{L^2(\D)}-(\kernel*u,v)_{L^2(\D)},\label{eq:split_b}
\end{equation}
where $\cker$ is defined in~\eqref{cker}.
\end{proposition}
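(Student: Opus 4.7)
The plan is to recognize that the right-hand side of the claimed identity is exactly $(Bu,v)_{L^2(\Omega)}$ by the splitting already exhibited in~\eqref{operatorB}, so the statement reduces to proving
\[
b(u,v) \;=\; (Bu,v)_{L^2(\Omega)} \qquad \forall u,v\in V_B,
\]
which is a direct consequence of the nonlocal Green's first identity~\eqref{Greens}.

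First, I would write out $Bu(x)=c_\gamma(x)u(x)-(\gamma*u)(x)$ and note that under hypothesis~\eqref{kernel_cond1} we have $\gamma\in L^1(\R^n)$, hence $c_\gamma\in L^\infty(\Omega)$ and $\gamma*u\in L^2(\Omega)$ for every $u\in L^2(\widetilde\Omega)$, so that all integrals below are finite. Then I apply~\eqref{Greens} with the roles of $u$ and $v$ from the proposition, obtaining
\[
(v,Bu)_{L^2(\Omega)}=\frac{1}{2}\int_{\widetilde\Omega}\!\int_{\widetilde\Omega}(u(\x)-u(\xx))(v(\x)-v(\xx))\kerneld\,\d\xx\,\d\x + \int_{\widetilde\Omega\setminus\Omega} v(\x)\,\NM u(\x)\,\d\x.
\]
The first term on the right is exactly $(u,v)_{V_B}=b(u,v)$, so the proof reduces to showing that the boundary integral vanishes.

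For the vanishing of the boundary term I split into the two cases. In \emph{Case~2} (``regional''), $\widetilde\Omega=\Omega$ so $\widetilde\Omega\setminus\Omega=\emptyset$ and the integral is trivially zero. In \emph{Case~1} (``Neumann''), $\widetilde\Omega\setminus\Omega=\Omega_I$, but by the very definition of $V_B$ we have $\NM u(\x)=0$ for a.e.\ $\x\in\Omega_I$, so the integrand vanishes pointwise. In both cases one concludes $b(u,v)=(v,Bu)_{L^2(\Omega)}$, and combining with the pointwise decomposition of $Bu$ yields the claimed identity.

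There is no real obstacle here; the only subtlety worth highlighting is that the inclusion of the nonlocal flux condition into the definition of $V_B$ in \emph{Case~1} is precisely what is needed to kill the boundary contribution in the Green's identity, which is why that condition was built into the function space in the first place.
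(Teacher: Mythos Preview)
Your proposal is correct and follows essentially the same route as the paper: apply the nonlocal Green's identity~\eqref{Greens} to rewrite $(Bu,v)_{L^2(\Omega)}$ as $b(u,v)$ plus the boundary term $\int_{\widetilde\Omega\setminus\Omega} v\,\NM u$, then observe that this term vanishes (trivially in \emph{Case~2}, and via the flux condition $\NM u=0$ on $\Omega_I$ built into $V_B$ in \emph{Case~1}). The paper's proof is the same argument, only phrased more tersely and citing~\eqref{bc_nonlocal} rather than the definition of $V_B$ for the vanishing step.
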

\begin{proof}
{The proof follows directly from~\eqref{eq:split_b},~\eqref{operatorB} and an identity~\eqref{Greens}.}
\end{proof}
We recall the Young's inequality for products: 
\begin{equation}
ab\leq \frac{1}{2\epsilon}a^2+\frac{\epsilon}{2}b^2,\quad\forall a,b\geq 0,\quad\epsilon>0,
\label{youngs}
\end{equation}
and also make use of the Young's inequality for the convolution: letting $f\in L^p(\R^n)$ and $g\in L^q(\R^n)$ with $1\leq p\leq\infty$, $1\leq q\leq\infty$, and ${1}/{r}={1}/{p}+{1}/{q}-1\geq 0$, then $f*g\in L^r(\R^n)$ and
\begin{equation}
\norm{f*g}_{L^r(\R^n)}\leq\norm{f}_{L^p(\R^n)}\norm{g}_{L^q(\R^n)}.
\label{youngs_conv}
\end{equation}

\subsection{Exterior problem}
We note that for {\it Case~1} the nonlocal Neumann type volume constraints could be considered as an ``exterior" nonlocal problem posed on $\DI$ with non-homogeneous Dirichlet-type volume constraints imposed on $\D$. In particular, we consider the  problem 
\begin{equation}
\begin{aligned}
\NM u(\x) &= 0\quad\text{on }\DI,\\
u &= g\quad\text{on } \D.\label{exterior_strong}
\end{aligned}
\end{equation}
We define a nonlocal space $\Vb^0$ incorporating homogeneous volume constraints as 
\[
\Vb^0:=\big\{v\in L^2(\DD)\colon b(v,v)<\infty,\;\; v=0\;\; \text{on }\; \D\big\},\quad \norms{v}^2_{\Vb^0}:=b(v,v),
\]
where $b(\cdot,\cdot)$ is defined as in~\eqref{bilform_b} with $\DC=\DD$. 
For $\hat{u}:=u-g\in\Vb^0$, where $g$ is extended by zero outside of $\D$, and, by a slight abuse of notation, denoted by the same symbol,
we consider the corresponding weak formulation of the above problem:
\begin{equation}
b(\hat{u},v)=-(\NM g,v)_{L^2(\DI)}\quad\forall v\in\Vb^0.
\label{exterior_form}
\end{equation}
We recall a nonlocal Poincar\'e-type inequality (see, e.g.,~\cite[Proposition~1]{mengeshadu2013}):
\begin{equation}
\norm{u}_{L^2(\DI)}\leq C_P\norms{u}_{\Vb^0},\quad\forall u\in\Vb^0, \label{exterior_poincare}
\end{equation}
with $C_P=C_P(\kernel,\DC)$, and note that $b(\cdot,\cdot)$ is continuous and coercive on~$\Vb^0$
\[
b(u,u)\geq C_P^{-2}\norm{u}^2_{L^2(\DI)},\quad
|b(u,v)|\leq 4\ckerr\norm{u}_{L^2(\DI)}\norm{v}_{L^2(\DI)},\quad\forall u,v\in\Vb^0.
\]
Then, by the Lax-Milgram argument there exist a unique solution of~\eqref{exterior_form} with
\begin{equation}
\norm{u}_{L^2(\DI)}= \norm{\hat{u}}_{L^2(\DI)}\leq {C_P^2}\ckerr\norm{g}_{L^2(\D)},
\label{exterior_bdd}
\end{equation}
where the last inequality above has been obtained by using the definition of $\NM$,~\eqref{bc_nonlocal}, Young's inequality and the fact that $g=0$ on $\R^n\setminus\D$. 
Next, we show that, for both cases {\it Case~1} and {\it Case~2}, $\Vb\cong L^2(\D)$.
\begin{proposition}
The space $\left(\Vb,\norm{\cdot}_{\Vb}\right)$ is a Hilbert space that is equivalent to~$L^2(\D)$.
\end{proposition}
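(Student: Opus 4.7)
The plan is to show that the restriction map $R\colon\Vb\to L^2(\D)$, $Rv:=v|_{\D}$, is a linear bijection with both $R$ and $R^{-1}$ bounded. Once this is in place, the norm equivalence $\norm{v}_{\Vb}\sim\norm{Rv}_{L^2(\D)}$ follows at once, $\Vb$ inherits completeness from $L^2(\D)$, and the symmetric bilinear form $(u,v)_{\Vb}+(u,v)_{L^2(\D)}$, which is positive-definite by the norm equivalence, defines an inner product inducing $\norm{\cdot}_{\Vb}$.

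The easy direction $\norm{Rv}_{L^2(\D)}\leq\norm{v}_{\Vb}$ is immediate from the very definition of the $\Vb$-norm. For the reverse direction I would start from the decomposition~\eqref{eq:split_b} and apply Young's convolution inequality~\eqref{youngs_conv} with $\kernel\in L^1(\R^n)$ to control $(\kernel*v,v)_{L^2(\D)}$, yielding an estimate of the form $\norms{v}_{\Vb}^2\leq 2\ckerr\norm{v}_{L^2(\DC)}^2$. In \textit{Case~2}, where $\DC=\D$, this already gives $\norm{v}_{\Vb}\leq C\norm{v}_{L^2(\D)}$ and $R$ is the identity, so the claim is immediate.

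The principal obstacle is \textit{Case~1}, where one must additionally bound $\norm{v}_{L^2(\DI)}$ by $\norm{v}_{L^2(\D)}$; this is precisely what the exterior-problem machinery was set up for. Given $v\in\Vb$, I would set $g:=v|_{\D}$ (extended by zero outside $\D$), note that $\hat{u}:=v-g$ lies in $\Vb^0$ and solves the weak exterior problem~\eqref{exterior_form}, and then invoke~\eqref{exterior_bdd} to obtain $\norm{v}_{L^2(\DI)}\leq C_P^2\ckerr\norm{g}_{L^2(\D)}$. Feeding this back into the convolution bound of the previous paragraph produces the reverse norm inequality with a constant depending only on $\ckerr$ and $C_P$.

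Bijectivity of $R$ in \textit{Case~1} follows from the same exterior problem: for surjectivity, given any $g\in L^2(\D)$, the unique solution $\hat{u}\in\Vb^0$ of~\eqref{exterior_form} delivers $v:=g+\hat{u}\in\Vb$ with $Rv=g$; for injectivity, $Rv=0$ together with $\NM v=0$ on $\DI$ reduces to the exterior problem with zero data, whose unique solution is $0$. Thus $R$ is a bounded linear isomorphism onto the Hilbert space $L^2(\D)$, and the proposition follows.
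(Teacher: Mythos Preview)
Your proposal is correct and follows essentially the same route as the paper: both rely on the splitting~\eqref{eq:split_b}, Young's convolution inequality to bound $\norms{v}_{\Vb}^2$ by $\norm{v}_{L^2(\DC)}^2$, and the exterior-problem estimate~\eqref{exterior_bdd} to absorb the $\norm{v}_{L^2(\DI)}$ contribution in \textit{Case~1}. The only difference is one of presentation: the paper establishes the two-sided norm inequality directly and asserts that Hilbert-space equivalence follows, whereas you phrase the result as the restriction map $R$ being a bounded linear isomorphism and explicitly verify surjectivity and injectivity via the exterior problem---a step the paper leaves implicit but which is needed to conclude completeness of $\Vb$ from that of $L^2(\D)$.
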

\begin{proof}
To show that $\Vb$ is equivalent to $L^2(\D)$ (which immediately implies that $\Vb$ is a Hilbert space) it is suffices to show the norm equivalence between these spaces. First, it is clear that $\norm{v}_{L^2(\D)}\leq\norm{v}_{\Vb}$. On the other hand, for all $u\in\Vb$ using~\eqref{eq:split_b} and Young's inequality~\eqref{youngs_conv} we obtain
\begin{multline*}
\norm{u}^2_{\Vb}=\norms{u}_{\Vb}+\norm{u}_{L^2(\D)}=(\cker u,u)_{L^2(\D)}-(\kernel*u,u)_{L^2(\D)}+\norm{u}^2_{L^2(\D)}\\
\leq (\ckerr+1)\norm{u}^2_{L^2(\D)}+\ckerr\norm{u}_{L^2(\DC)}\norm{u}_{L^2(\D)}\\
\leq\left((2\ckerr+1)\norm{u}_{L^2(\D)}+\ckerr\norm{u}_{L^2(\DC\setminus\D)}\right)\norm{u}_{L^2(\D)}\leq C\norm{u}_{\Vb}\norm{u}_{L^2(\D)},
\end{multline*}
where in the last inequality we exploit the fact that for {\it Case~2}, $\DC\setminus\D=\emptyset$, and for {\it Case~1}, $u\in\Vb$ is a solution of the exterior problem~\eqref{exterior_strong} and~\eqref{exterior_bdd} holds true. Then, dividing by $\norm{u}_{\Vb}$ concludes the proof. 
\end{proof}
We denote by $\Vb^\prime$ the dual space of $\Vb$, $\Vb^\prime=(\Vb)^\prime$. Clearly, with the previous result at hand, we obtain that the embedding 
\[\Vb\hookrightarrow L^2(\D)\hookrightarrow\Vb^{\prime}
\]
forms a Gelfand triple, i.e., it is continuous and dense. 

\subsection{Variational formulation}
Now, we are ready to present a weak formulation of the problem~\eqref{nonl_subdiff}. To incorporate inequality constraints $|u|\leq 1$ in the variational formulation, we introduce the set
\begin{equation}
\mathcal{K}:=\{v\in\Vb\colon |v|\leq 1\;\; \text{a.\ e. in}\;\; \D\},\label{eq:set_K}
\end{equation}
and the set $\mathcal{K}_m:=\{v\in\mathcal{K}\colon (v,1)_{L^2(\D)}=m\}$ of functions with mass $m\in(-|\D|,|\D|)$, {where $|\D|$ is the volume of $\D$}. We also define a positive cone 
\[
\M=\{v\in \Vb^\prime\colon v\geq 0\;\text{a.\ e. on}\;\D\}.
\]
We introduce space-time cylinders $\Q:=\D\times(0,T)$ and $\QC:=\DC\times(0,T)$, $T>0$.
Then, we consider the following problem: Find $u(t)\in\mathcal{K}$, $w(t)\in\Va$ such that 
\begin{equation}
\begin{aligned}
\duala{\frac{\partial u(t)}{\partial t},\phi}+(\nabla w(t),\nabla\phi)_{L^2(\D)}&=0,\quad\forall\phi\in\Va,\\
 b(u(t),\psi-u(t))+(F^{\prime}_0(u(t))-w(t),\psi-u(t))_{L^2(\D)}&\geq 0,\quad\forall\psi\in\mathcal{K},
\end{aligned}
 \label{CH_VI}\tag{$\mathcal{P}$}
\end{equation}
subject to $u(0)=u_0\in\mathcal{K}_m$.
In many instances, it is convenient to work with an equivalent formulation, where we eliminate the first equation in~\eqref{CH_VI} by setting $w(t)=-\G(\partial_t u(t))-\mu(t)$, where $\G$ is a Green's operator defined in~\eqref{greens_problem} and {$\mu(t)=-\frac{1}{|\D|}\int_\D w(t)\d\x\in\R$ is the negative mean-value of $w(t)$}.  
In addition, to incorporate the inequality constraints~\eqref{eq:set_K}, we introduce additional Lagrange multipliers $\lambda_{\pm}(t)\in L^2(\D)$. Then, we arrive at the following problem having a saddle point structure: Find $u(t)\in\Vb$, $\lambda(t)\in\Vb^\prime$, $\lambda(t):=\lambda_{+}(t)-\lambda_{-}(t)$, $\lambda_{\pm}(t)\in\M$, $\mu(t)\in\R$ such that 
\begin{equation}
\begin{aligned}
(\G(\partial_t u(t))+Bu(t)+F^{\prime}_0(u(t))+\lambda(t)+\mu(t),\psi)_{L^2(\D)}&=0,\;\;\forall\psi\in\Vb,\\
(\eta-\lambda_{\pm}(t),1\mp u)_{L^2(\D)}&\geq 0,\;\;\forall\eta\in\M.
\end{aligned}\label{CH_VIG}
\end{equation}
We note that the inequalities in~\eqref{CH_VIG}, which refer to complementarity conditions, can be simplified further to
\begin{equation}
(\eta,u(t)\pm 1)_{L^2(\D)}\geq 0,\quad (\lambda_{\pm}(t),u(t)\mp 1)_{L^2(\D)}=0,\quad\forall\eta\in\M.
\label{CH_VIG_complem}
\end{equation}
The above equations are obtained by simply taking $\eta=2\lambda_{\pm}(t)$ and $\eta =0$ in~\eqref{CH_VIG}.

We note that the problem~\eqref{CH_VIG} is also equivalent to its time-integrated version: Find $u\in L^2(0,T;\mathcal{K})$ such that
\begin{equation}
\begin{aligned}
(\G(\partial_t u)+Bu+F^{\prime}_0(u)+\lambda+\mu,\psi)_{L^2(\Q)} =0,\;\;\forall\psi\in L^2(0,T;\Vb),\\
(\eta-\lambda_{\pm},1\mp u)_{L^2(\Q)}\leq 0,\;\;\forall\eta\in L^2(0,T;\M),
\end{aligned}\label{CH_VIGt}
\end{equation}
where $L^2(0,T;M):=\{v\in L^2(0,T;\Vb^\prime)\colon v\geq 0\;\text{a.\ e. in}\; \Q\}$ and $L^2(0,T;\mathcal{K}):=\{v\in L^2(0,T;\Vb)\colon |v|\leq 1\;\text{a.\ e. in}\; \Q\}.$

\section{Semi-discrete problem: Existence, uniqueness and a sharp interface condition}\label{sec:semidiscrete}
In this section we present the semi-discretization of the problem~\eqref{CH_VI} and establish existence, uniqueness and the main properties of the solution. In the course of our analysis we introduce and make use of different equivalent characterizations of the problem. 

For $T>0$ and $K\in\mathbb{N}$, we define $\tt=T/K$ and $u^k:=u(t^k)$, $w^k:=w(t^k)$ for $k=1,\dots,K$. 
Following~\cite{blowey_elliott_1992} we consider an implicit time stepping scheme. Given $u^0\in\mathcal{K}_m$, we consider for $k=1,\dots,K$ the following problem: Find $(u^k,w^k)\in\mathcal{K}\times\Va$ such that
\begin{equation}
\begin{aligned}
\frac{1}{\tt}(u^k-u^{k-1},\phi)_{L^2(\D)}+(\nabla w^k,\nabla\phi)_{L^2(\D)}&=0,\quad\forall\phi\in\Va,\\
b(u^k,\psi-u^k)+(F^{\prime}_0(u^{k})-w^k,\psi-u^k)_{L^2(\D)}&\geq 0,\quad\forall\psi\in\mathcal{K}.
\end{aligned}\tag{$\mathcal{P}^k$}\label{CH_VI_tk}\end{equation}

We point out that the above scheme is mass conserving. Indeed, taking $\phi=1$ in~\eqref{CH_VI_tk} we obtain that $(u^{k-1},1)=(u^{k},1)=m$. 

If $u^k\in\mathcal{K}_m$ is a solution of~\eqref{CH_VI_tk}, then setting $w^k=-\G ({1}/{\tt}(u^k-u^{k-1}))-\mu^k$, $\mu^k\in\R$, and restricting the test functions to $\psi\in\mathcal{K}_m$, we can restate the coupled system~\eqref{CH_VI_tk} as a single variational inequality: Given $u^0\in\mathcal{K}_m$, find $u^k\in\mathcal{K}_m$ such that for $k=1,\dots,K$ it holds that
\begin{equation}
\frac{1}{{\tt}}\left(\G\left(u^k-u^{k-1}\right),\psi-u^k\right)_{L^2(\D)}+b(u^k,\psi-u^k)+(F^{\prime}_0(u^{k}),\psi-u^k)_{L^2(\D)}\geq 0,\;\;\forall\psi\in\mathcal{K}_m.
\tag{$\mathcal{Q}^k$}\label{CH_VI_greens}
\end{equation}
Furthermore, the above problem~\eqref{CH_VI_greens} can be equivalently posed as a system of complementarity conditions: For a given $u^0\in\mathcal{K}_m$, find $(u^k,\lambda_{\pm}^k,\mu^k)\in\Vb\times\M\times\R$ such that for $k=1,\dots,K$, it holds that
\begin{equation}
\begin{aligned}
\left(\frac{1}{{\tt}}\G(u^k-u^{k-1})+Bu^k+F^{\prime}_0(u^k)+\lambda^k+\mu^k,\psi\right)_{L^2(\D)}&=0,\;\;\forall\psi\in\Vb,\\
(\eta-\lambda_{\pm}^k,1\mp u^k)_{L^2(\D)}&\geq 0,\;\;\forall\eta\in\M.
\end{aligned}\label{CH_VIG_tk}
\end{equation}
The variational inequality has a direct connection to a minimization problem.

\subsubsection*{Minimization problem}
For $u\in\Vb$ we define the functional
\begin{align*}
\J(u):=&\frac{1}{2}\norms{u}_{\Vb}^2+\int_{\D}F_0(u)\d\x+\frac{1}{2\tt}\norm{\nabla\G(u-u^{k-1})}^2_{L^2(\D)}\\
=&\frac{1}{2}(\xi u,u)_{L^2(\D)}-\frac{1}{2}(\kernel*u,u)_{L^2(\D)}+\frac{\cpot}{2}|\D|+\frac{1}{2\tt}\norm{u-u^{k-1}}^2_{\Va^\prime},
\end{align*}
where $\xi(\x):=\cker(\x)-\cpot$.
Then, we consider the following constrained minimization problem:
\begin{equation}
\min_{u\in\mathcal{K}} \J(u)
 \quad\text{subject to}\int_{\D}u\d\x=m,\quad m\in (-|\D|,|\D|).
\tag{$\mathcal{J}^k$}\label{min_problem}
\end{equation}

\begin{proposition}\label{prop:JQ}
If $u^k\in\mathcal{K}_m$ is a solution of~\eqref{min_problem}, then it solves the variational inequality~\eqref{CH_VI_greens}. Conversely, if $u^k\in\mathcal{K}_m$  is a solution of~\eqref{CH_VI_greens} and the conditions on the time step $\tt$ given in Theorem~\ref{thm:uniqueness_semidiscrete} hold true, then $u^k\in\mathcal{K}_m$ is also a solution to~\eqref{min_problem}. 
\end{proposition}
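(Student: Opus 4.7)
The plan is to establish the two implications separately, exploiting the fact that $\J$ is a \emph{quadratic} functional on $\Vb$ (each summand is quadratic in $u$, including $F_0(u)=\tfrac{\cpot}{2}(1-u^2)$), so that all Taylor expansions terminate after second order.

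\textbf{Forward direction.} Suppose $u^k\in\mathcal{K}_m$ minimizes $\J$. Since $\mathcal{K}_m$ is convex, the segment $u^k+t(\psi-u^k)$ lies in $\mathcal{K}_m$ for every $t\in[0,1]$ and every $\psi\in\mathcal{K}_m$, so $\phi(t):=\J(u^k+t(\psi-u^k))$ attains its minimum at $t=0$ on $[0,1]$, whence $\phi'(0^+)\ge0$. I will compute this derivative term-by-term: the seminorm contributes $b(u^k,\psi-u^k)$ and the potential contributes $(F_0'(u^k),\psi-u^k)_{L^2(\D)}$. For the dual-norm piece, the derivative is $\tt^{-1}(\nabla\G(u^k-u^{k-1}),\nabla\G(\psi-u^k))_{L^2(\D)}$; invoking~\eqref{greens_problem} with the legitimate test function $\G(\psi-u^k)\in\Vao$ collapses this to $\tt^{-1}(\G(u^k-u^{k-1}),\psi-u^k)_{L^2(\D)}$. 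Assembling the three contributions reproduces exactly~\eqref{CH_VI_greens}.

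\textbf{Converse.} Since $\J$ is quadratic in $u$, the second-order Taylor expansion about $u^k$ is exact. Setting $v:=\psi-u^k$ (which lies in $\Vao$, as both $\psi$ and $u^k$ belong to $\mathcal{K}_m$) and using $F_0''\equiv-\cpot$ together with the splitting~\eqref{eq:split_b}, I obtain
\begin{equation*}
\J(\psi)-\J(u^k)=\underbrace{b(u^k,v)+(F_0'(u^k),v)_{L^2(\D)}+\tfrac{1}{\tt}(\G(u^k-u^{k-1}),v)_{L^2(\D)}}_{\ge 0\ \text{by }\eqref{CH_VI_greens}}\;+\;R(v),
\end{equation*}
\begin{equation*}
R(v)=\tfrac12\bigl[(\param v,v)_{L^2(\D)}-(\kernel*v,v)_{L^2(\D)}+\tt^{-1}\norm{v}_{\Va^\prime}^{2}\bigr].
\end{equation*}
Hence it suffices to show $R(v)\ge0$ for every admissible increment $v\in\Vao$; equivalently, $\J$ is convex on $\mathcal{K}_m$ under the prescribed time-step condition.

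\textbf{The main obstacle} is to control the sign-indefinite convolution term $(\kernel*v,v)_{L^2(\D)}$, which is the only part of $R$ that can be negative. My plan is to dualize via $(\kernel*v,v)_{L^2(\D)}\le\norm{\kernel*v}_{\Va}\norm{v}_{\Va^\prime}$, then use Young's convolution inequality~\eqref{youngs_conv}, invoking hypothesis~\eqref{kernel_cond2} to estimate the gradient, to obtain $\norm{\kernel*v}_{\Va}\le(\ckerr+\ckerrgrad)\norm{v}_{L^2(\D)}$. A scalar Young inequality~\eqref{youngs} with parameter $\epsilon$ chosen so that the $\norm{v}_{\Va^\prime}^{2}$ contribution is absorbed into $\tt^{-1}\norm{v}_{\Va^\prime}^{2}$ then leaves a residual $\norm{v}_{L^2(\D)}^{2}$ term whose coefficient is proportional to $\tt(\ckerr+\ckerrgrad)^{2}$; this residual must be dominated by $(\param v,v)_{L^2(\D)}$, which produces precisely an upper bound on $\tt$ of the form asserted in Theorem~\ref{thm:uniqueness_semidiscrete}. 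The delicate aspect is the limiting case where the coercivity from $\param$ degenerates; there the absorption must instead rely on the full dual pairing and is treated separately in that theorem.
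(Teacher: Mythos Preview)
Your approach is essentially the paper's: both directions rest on the convexity of $\J$ on $\mathcal{K}_m$, and the time-step restriction from Theorem~\ref{thm:uniqueness_semidiscrete} is exactly what makes the quadratic remainder $R(v)$ nonnegative. The paper's own proof is a one-liner: it simply observes that $\J$ is convex and G\^ateaux differentiable (deferring the convexity verification to Theorem~\ref{thm:uniqueness_semidiscrete}) and then cites the standard fact that for a convex functional on a convex set the first-order variational inequality is both necessary and sufficient for minimality. You have unpacked this by writing out the exact second-order expansion and arguing directly that $R(v)\ge 0$, which is fine.

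Two points where your explicit computation does not quite reproduce the conditions of Theorem~\ref{thm:uniqueness_semidiscrete}. First, you invoke~\eqref{kernel_cond2} to bound $\norm{\kernel*v}_{\Va}$ via $\ckerrgrad$, whereas Theorem~\ref{thm:uniqueness_semidiscrete} is stated under~\eqref{kernel_cond1} alone and uses the $W^{1,1}$ approximants $\kernel_\eta$ of Proposition~\ref{prop:kernel_prop}; the constants $C_\eta,\eta$ in the time-step bound come from splitting $\kernel=\kernel_\eta+(\kernel-\kernel_\eta)$, not from $\ckerrgrad$ directly. Second, your estimate lands on $\norm{v}_{L^2(\D)}^2$, but in \emph{Case~1} the convolution is over $\DC=\DD$ and produces $\norm{v}_{L^2(\DC)}^2$; the extra $\norm{v}_{L^2(\DI)}$ piece is controlled via the exterior-problem bound~\eqref{exterior_bdd}, which is why the factor $(1+C_P^4\ckerr^2)$ appears in the Case~1 time-step condition. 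If you want your bound on $\tt$ to match the statement ``precisely'', you need to mirror these two refinements from the proof of Theorem~\ref{thm:uniqueness_semidiscrete}; otherwise your argument yields a correct but slightly more restrictive condition. Your closing remark about a ``limiting case where the coercivity from $\param$ degenerates'' is misplaced here: the hypotheses of Theorem~\ref{thm:uniqueness_semidiscrete} already require $\param>0$ on $\overline\D$, so no separate treatment is needed for the converse implication.
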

\begin{proof}
Since $\J$ is convex and G\^ateaux differentiable in $\Vb$ (see Theorem~\ref{thm:uniqueness_semidiscrete}), it follows that the variational inequality~\eqref{CH_VI_greens} is a necessary and sufficient first-order optimality condition for the minimization problem~\eqref{min_problem}; see~\cite[Lemma~2.21]{troltzsch2010optimal}.
\end{proof}
Next, we state a useful property for the function belonging to the set $\mathcal{K}$, which will be used to prove the existence result in the next section.
\begin{proposition}[Maximum principle]\label{prop:max_principle}
Let $u\in\mathcal{K}$, then it follows that $|u|\leq 1$ in $\DC$, and hence $u\in L^{\infty}(\DC)$.
\end{proposition}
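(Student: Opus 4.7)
The plan is to argue separately in the two cases in~\eqref{operatorB}. In \emph{Case~2} ($\DC=\D$) the conclusion $|u|\leq 1$ in $\DC$ is exactly the defining condition of $\mathcal{K}$ in~\eqref{eq:set_K}, so nothing remains to prove. The entire content is therefore \emph{Case~1} ($\DC=\DD$), where the bound $|u|\leq 1$ built into $\mathcal{K}$ must be promoted from $\D$ to the exterior set $\DI$ by exploiting the nonlocal Neumann condition $\NM u=0$ on $\DI$ encoded in the space $\Vb$.

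The strategy is a nonlocal truncation test. I would take
\[
v(\x):=\bigl(u(\x)-1\bigr)^{+}\quad\text{on }\DI,\qquad v(\x):=0\quad\text{on }\D,
\]
which is a consistent extension since $|u|\leq 1$ a.e.\ on $\D$, and clearly $v\in L^{2}(\DD)$. Applying the nonlocal Green's identity~\eqref{Greens} to this $v$, the left-hand side $(v,Bu)_{L^{2}(\D)}$ vanishes because $v\equiv 0$ on $\D$, and the boundary integral over $\DI$ vanishes because $\NM u\equiv 0$ on $\DI$, leaving
\[
0=\tfrac{1}{2}\int_{\DD}\int_{\DD}\bigl(u(\x)-u(\xx)\bigr)\bigl(v(\x)-v(\xx)\bigr)\kerneld\,\d\xx\,\d\x.
\]
Splitting $\DD\times\DD$ into its four pieces, the $\D\times\D$ piece is zero; on $\DI\times\DI$ the elementary inequality $(a-b)(a^{+}-b^{+})\geq (a^{+}-b^{+})^{2}$ (with $a=u(\x)-1$, $b=u(\xx)-1$) makes the integrand nonnegative; and on each mixed piece a direct pointwise check, using $|u|\leq 1$ on $\D$ to control the sign of $u(\x)-u(\xx)$, shows the integrand is bounded below by $\bigl((u-1)^{+}\bigr)^{2}\kerneld$ evaluated at the $\DI$-argument. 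Since all four contributions are nonnegative and sum to zero, each vanishes, and in particular from the mixed terms one obtains
\[
\bigl((u(\x)-1)^{+}\bigr)^{2}\int_{\D}\kerneld\,\d\xx=0\quad\text{for a.e.\ }\x\in\DI.
\]

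The only point I expect to require some care is that $\int_{\D}\kerneld\,\d\xx>0$ for a.e.\ $\x\in\DI$; this is however forced by the very definition of $\DI$ together with the kernel hypothesis~\eqref{kernel_cond1}: $\DI$ is precisely the set of exterior points that interact with $\D$ through $\kernel$, and because $(0,\sigma)\subset {\rm supp}(\hat{\kernel})$ the kernel $\kernel(\x-\cdot)$ does not vanish a.e.\ on a neighborhood of $\x$, so its integral over $\D$ is strictly positive off a negligible set of $\x\in\DI$. Hence $(u-1)^{+}=0$ a.e.\ on $\DI$, i.e., $u\leq 1$ a.e.\ on $\DI$. Replaying the argument with the test function $v:=(-u-1)^{+}$ on $\DI$ (zero on $\D$) yields $u\geq -1$ a.e.\ on $\DI$. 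Combined with the bound on $\D$, this gives $|u|\leq 1$ a.e.\ on $\DC=\DD$, and in particular $u\in L^{\infty}(\DC)$.
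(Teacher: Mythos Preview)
Your argument is correct and follows the standard truncation approach for nonlocal maximum principles; the paper itself gives no details here and simply defers to Theorem~4.4 in~\cite{BurkovskaGunzburger2019}, where the same test-function strategy is used. The one step you single out---strict positivity of $\int_{\D}\kerneld\,\d\xx$ for a.e.\ $\x\in\DI$---is indeed where the hypothesis $(0,\sigma)\subset{\rm supp}(\hat\kernel)$ in~\eqref{kernel_cond1} together with the openness of $\D$ does the work; should one also want to cover exterior points at distance $\geq\sigma$ from $\D$ (possible when ${\rm supp}(\hat\kernel)$ is not an interval), the vanishing of the $\DI\times\DI$ block you already recorded forces $(v(\x)-v(\xx))^{2}\kerneld=0$ a.e.\ and propagates $v=0$ outward, which is precisely the mechanism behind the Poincar\'e inequality~\eqref{exterior_poincare}.
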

\begin{proof}
The proof follows the same lines as in the proof of Theorem~4.4 in~\cite{BurkovskaGunzburger2019}. 
\end{proof}

\subsection{Existence and uniqueness}
To show the existence and uniqueness of the semi-discrete problem~\eqref{CH_VI_tk}, we first discuss the existence and uniqueness of the solution of~\eqref{CH_VI_greens}, and then demonstrate the equivalence of problem formulations~\eqref{CH_VI_tk} and~\eqref{CH_VI_greens}. 

We collect in Proposition~\ref{prop:kernel_prop} some properties of the kernel which will be useful later.
 
\begin{proposition}\label{prop:kernel_prop}
For the kernel $\kernel$ satisfying~\eqref{kernel_cond1} the following properties hold true.
\begin{enumerate}
\item[(i)] For $\eta>0$ there exists a family of functions $\kernel_\eta:\R^n\to\R^+$ satisfying~\eqref{kernel_cond1}--\eqref{kernel_cond2}, such that
\begin{equation}
\norm{\nabla\kernel_\eta}_{L^{1}(\R^n)}\leq C_\eta,\quad\text{and}\quad\norm{\kernel-\kernel_\eta}_{L^1(\R^n)}\leq\eta,\quad C_\eta>0.
\end{equation}
\item[(ii)] The sequence $\kernel_\eta*u\to\kernel*u$ converges uniformly in $L^{\infty}(\DC)$ for any $u\in L^{\infty}(\DC)$, and the limiting function is continuous
\begin{equation}
\kernel*u\in C(\DC),\quad\forall u\in L^{\infty}(\DC).
\end{equation}
\item[(iii)] If, in addition,  $\kernel$ satisfies~\eqref{kernel_cond2}, then 
\begin{equation}
\kernel*u\in W^{1,\infty}(\R^n),\quad\forall u\in L^{\infty}(\R^n).
\end{equation}
\end{enumerate}
\end{proposition}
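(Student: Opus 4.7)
The plan is to handle the three parts sequentially, since (ii) follows from (i) together with a Young-inequality estimate, and (iii) is essentially a direct application of Young's inequality once the extra regularity is assumed.

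For part (i), I would use standard mollification. Pick a radial, non-negative mollifier $\rho_\epsilon\in C_c^\infty(\R^n)$ with $\int\rho_\epsilon=1$ and $\operatorname{supp}(\rho_\epsilon)\subset B_\epsilon(0)$, and set $\kernel_\eta:=\rho_\epsilon*\kernel$ with $\epsilon=\epsilon(\eta)$ chosen small enough. Non-negativity and radiality are preserved because the convolution of two non-negative radial functions in $\R^n$ is non-negative and radial. Since $\operatorname{supp}(\kernel)\subset\overline{B_\delta(0)}$, we have $\operatorname{supp}(\kernel_\eta)\subset\overline{B_{\delta+\epsilon}(0)}$, so $\kernel_\eta$ is compactly supported; the condition that some interval $(0,\sigma')\subset\operatorname{supp}(\hat\kernel_\eta)$ survives with a possibly smaller $\sigma'$, because $\kernel>0$ on an annular neighbourhood of $0$ and averaging cannot destroy that in a set of positive measure. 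The standard approximation property $\kernel_\eta\to\kernel$ in $L^1(\R^n)$ gives $\norm{\kernel-\kernel_\eta}_{L^1(\R^n)}\leq\eta$ for $\epsilon$ small. Finally, since $\nabla\kernel_\eta=(\nabla\rho_\epsilon)*\kernel$, Young's inequality~\eqref{youngs_conv} yields $\norm{\nabla\kernel_\eta}_{L^1(\R^n)}\leq\norm{\nabla\rho_\epsilon}_{L^1(\R^n)}\norm{\kernel}_{L^1(\R^n)}=:C_\eta$, confirming (H2).

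For part (ii), let $u\in L^\infty(\DC)$ and extend by zero to $L^\infty(\R^n)$. Young's inequality~\eqref{youngs_conv} with $p=1$, $q=\infty$ gives
\[
\norm{\kernel_\eta*u-\kernel*u}_{L^\infty(\DC)}\leq\norm{\kernel_\eta-\kernel}_{L^1(\R^n)}\norm{u}_{L^\infty(\R^n)}\leq\eta\,\norm{u}_{L^\infty(\DC)},
\]
so $\kernel_\eta*u\to\kernel*u$ uniformly on $\DC$. For continuity of the limit, it is enough to observe that each $\kernel_\eta*u$ is continuous on $\R^n$: indeed, since $\kernel_\eta\in W^{1,1}(\R^n)$, part (iii) applied to $\kernel_\eta$ shows $\kernel_\eta*u\in W^{1,\infty}(\R^n)\subset C(\R^n)$. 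A uniform limit of continuous functions on $\DC$ is continuous, so $\kernel*u\in C(\DC)$.

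For part (iii), assume $\kernel\in W^{1,1}(\R^n)$ and let $u\in L^\infty(\R^n)$. The identity $\nabla(\kernel*u)=(\nabla\kernel)*u$ holds in the distributional sense, and by Young's inequality~\eqref{youngs_conv},
\[
\norm{\kernel*u}_{L^\infty(\R^n)}\leq\norm{\kernel}_{L^1(\R^n)}\norm{u}_{L^\infty(\R^n)},\qquad\norm{\nabla(\kernel*u)}_{L^\infty(\R^n)}\leq\norm{\nabla\kernel}_{L^1(\R^n)}\norm{u}_{L^\infty(\R^n)},
\]
so $\kernel*u\in W^{1,\infty}(\R^n)$.

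The only nontrivial step I foresee is the justification in part (i) that the mollified kernel still satisfies the positivity-near-zero condition in~\eqref{kernel_cond1}, but this is a straightforward consequence of $\kernel$ being non-negative with $\hat{\kernel}>0$ on an open interval $(0,\sigma)$. Everything else is a direct application of mollifier theory and Young's inequality for convolutions, and the existing assumption $\kernel\in L^1(\R^n)$ is exactly what is needed.
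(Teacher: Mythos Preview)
Your proposal is correct and follows essentially the same route as the paper: density/mollification for (i), Young's inequality for (iii), and then combining (i) and (iii) with the splitting $\kernel*u=\kernel_\eta*u-(\kernel-\kernel_\eta)*u$ to obtain uniform convergence of continuous functions for (ii). The paper's proof is considerably terser (it simply invokes ``the density argument'' and ``Young's inequality'' without spelling out the mollifier construction or the verification that $\kernel_\eta$ retains the structural conditions in~\eqref{kernel_cond1}), so your version is a more detailed realization of the same argument.
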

\begin{proof}
The conditions $(i)$ and $(iii)$ follow directly by density argument and Young's inequality~\eqref{youngs_conv}, respectively. Then, using the properties $(i)$, $(iii)$, and the fact that $\kernel*u=\kernel_\eta*u-(\kernel-\kernel_\eta)*u$, we obtain that $\kernel_\eta*u\in W^{1,\infty}(\R^n)$ for any $u\in L^{\infty}(\R^n)$, and
\[
\norm{\kernel*u-\kernel_\eta*u}_{L^{\infty}(\DC)}\leq\norm{\kernel-\kernel_\eta}_{L^1(\R^n)}\norm{u}_{L^{\infty}(\DC)}\leq\eta\norm{u}_{L^{\infty}(\DC)}.
\]
Now, letting $\eta\to 0$ we obtain $(ii)$ and conclude the proof. 
\end{proof}
Next, we establish the existence and uniqueness of the solution of the semi-discrete problem~\eqref{CH_VI_greens}.
\begin{theorem}[Existence]\label{thm:existence_semidiscrete}
Let $\kernel$ satisfy~\eqref{kernel_cond1} and let $\xi(\x):=\cker(\x)-\cpot\geq 0$ for all $\x\in\D$, then there exists a solution of~\eqref{CH_VI_greens}.
\end{theorem}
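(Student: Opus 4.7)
The plan is to reformulate the semi-discrete variational inequality~\eqref{CH_VI_greens} as the constrained minimization problem~\eqref{min_problem} and apply the direct method of the calculus of variations. By Proposition~\ref{prop:JQ}, any minimizer of $\J$ over $\mathcal{K}_m$ automatically solves~\eqref{CH_VI_greens} without any additional restriction on $\tt$, so it suffices to produce one.

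First I would observe that $\mathcal{K}_m$ is nonempty (it contains the constant $m/|\D|$ since $|m|<|\D|$), convex, and closed in $L^2(\D)$, and hence weakly closed in $\Vb\cong L^2(\D)$. Using $\xi(\x)\ge 0$ on $\D$, $|u|\le 1$ on $\DC$ (the extension to $\DC$ being supplied by Proposition~\ref{prop:max_principle}), and~\eqref{youngs_conv}, each term of $\J$ is bounded from below on $\mathcal{K}_m$: both $\tfrac{1}{2}(\xi u,u)_{L^2(\D)}$ and $\tfrac{1}{2\tt}\norm{u-u^{k-1}}_{\Va^\prime}^2$ are non-negative, while $-\tfrac{1}{2}(\kernel*u,u)_{L^2(\D)}\ge -\tfrac{1}{2}\ckerr|\D|$. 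Any minimizing sequence $\{u_j\}\subset\mathcal{K}_m$ is therefore uniformly bounded in $L^\infty(\D)$, and along a subsequence $u_j\rightharpoonup u^*$ weakly in $L^2(\D)$, with $u^*\in\mathcal{K}_m$ by weak closedness.

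Next I would verify weak lower semi-continuity of $\J$. The convex quadratic $\tfrac{1}{2}(\xi u,u)_{L^2(\D)}$ is weakly l.s.c.\ on $L^2(\D)$, and $\tfrac{1}{2\tt}\norm{u-u^{k-1}}_{\Va^\prime}^2$ inherits weak l.s.c.\ from the continuous embedding $L^2(\D)\hookrightarrow\Va^\prime$ together with the weak l.s.c.\ of the dual norm. For the only nonconvex contribution, $-\tfrac{1}{2}(\kernel*u,u)_{L^2(\D)}$, I would invoke a compactness argument: the uniform bound $|u_j|\le 1$ on $\DC$ combined with $\kernel\in L^1(\R^n)$ of compact support yields pointwise convergence $(\kernel*u_j)(\x)\to(\kernel*u^*)(\x)$ on $\DC$ under the uniform bound $\norm{\kernel*u_j}_{L^\infty(\DC)}\le\ckerr$, so dominated convergence delivers $\kernel*u_j\to\kernel*u^*$ strongly in $L^2(\D)$. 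Pairing this strong convergence with the weak convergence of $u_j$ yields $(\kernel*u_j,u_j)_{L^2(\D)}\to(\kernel*u^*,u^*)_{L^2(\D)}$, so this term is in fact weakly continuous. Combining these facts gives $\J(u^*)\le\liminf_{j\to\infty}\J(u_j)$, and Proposition~\ref{prop:JQ} then yields that $u^*$ solves~\eqref{CH_VI_greens}.

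The main obstacle is the non-convexity of $-\tfrac{1}{2}(\kernel*u,u)_{L^2(\D)}$, which precludes a routine convex weakly-l.s.c.\ argument; it is overcome precisely because the $L^\infty$-bound built into $\mathcal{K}$ and propagated to $\DC$ by the maximum principle is strong enough to upgrade the convolution to strong $L^2$-convergence. A secondary technicality is that in \textit{Case~1} the values on $\DI$ are coupled to those on $\D$ through the exterior problem~\eqref{exterior_form}, but since that coupling is linear and continuous in $L^2$ by~\eqref{exterior_bdd}, weak $L^2(\D)$-convergence of $u_j$ automatically propagates to weak $L^2(\DC)$-convergence, and the above argument goes through unchanged.
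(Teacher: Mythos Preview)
Your proposal is correct and follows essentially the same route as the paper: the direct method applied to~\eqref{min_problem}, with weak lower semicontinuity of the convex terms and weak continuity of the convolution term obtained via pointwise convergence of $\kernel*u_j$ together with dominated convergence. The only cosmetic difference is that the paper extracts a weak-$*$ convergent subsequence in $L^{\infty}(\DC)$ directly (which makes the duality with $\kernel(x-\cdot)\in L^1$ immediate), whereas you work with weak $L^2(\D)$ convergence and implicitly upgrade via the uniform $L^\infty$ bound; your explicit handling of \textit{Case~1} through the exterior problem~\eqref{exterior_form}--\eqref{exterior_bdd} is a detail the paper leaves tacit.
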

\begin{proof}
To show the existence of the solution of~\eqref{CH_VI_greens}, it is sufficient to show an existence result for the solution of~\eqref{min_problem}, which can also be equivalently written as $\min_{u\in\mathcal{K}_m}\J(u)$. 
Since $u\in\mathcal{K}_m$, and $\mathcal{K}_m\neq\emptyset$, it follows that $u\in L^{\infty}(\D)$ and $\norm{u}_{L^2(\D)}^2\leq|\D|$.
Then, we have that $\J(u)\geq 0$ for all $u\in\mathcal{K}_m$, and there exists $\hat{\J}\geq 0$ that realizes $\hat{\J}=\inf_{u\in\mathcal{K}}\J(u)$. Hence, there exists a minimizing sequence $\{u_n\}_{n\in\mathbb{N}}\subset\mathcal{K}_m$ for $\J(u)$ such that 
\[
\J(u_n)\rightarrow\hat{\J}:=\inf_{u\in\mathcal{K}_m}\J(u),\quad n\rightarrow\infty.
\]
Since $\{u_n\}\subset\mathcal{K}_m$, it follows by Proposition~\ref{prop:max_principle} that $u_n\in L^{\infty}(\DC)$. Then, by the Banach-Alaoglu theorem, we can extract a weak*-convergent subsequence $\{u_{n_j}\}_{n_j\in\mathbb{N}}\subset L^{\infty}(\DC)$: 
\[
u_{n_j}\stackrel{\ast}{\rightharpoonup}\hat{u},\quad j\rightarrow\infty.
\]
Since $\mathcal{K}_m$ is convex and closed, it is weakly closed and $\hat{u}\in\mathcal{K}_m$. Next, we are going to pass to the limit in $\J(u_{n_j})$. Since $\xi\geq 0$ and using expression~\eqref{eq:split_b}, we can write
\begin{equation}
\J(u_{n_j})=\frac{1}{2}\|\sqrt{\xi}u_{n_j}\|^2_{L^2(\D)}-\frac{1}{2}(\kernel*u_{n_j},u_{n_j})_{L^2(\D)}+\frac{\cpot}{2}|\D|+\frac{1}{2\tt}\norm{u_{n_j}-u^{k-1}}^2_{\Va^\prime},
\label{eq:limJ}
\end{equation}
where the first and the last terms in the expression above are continuous and convex, and, hence, weakly lower semi-continuous, i.e., 
\begin{equation*}
\|\sqrt{\xi}\hat{u}\|^2_{L^2(\D)}\leq\lim_{j\rightarrow\infty}\inf\|\sqrt{\xi}u_{n_j}\|^2_{L^2(\D)},\quad\norm{\hat{u}-u^{k-1}}^2_{\Va^\prime}\leq\lim_{j\rightarrow\infty}\inf\norm{u_{n_j}-u^{k-1}}^2_{\Va^\prime}.
\end{equation*}
Next, we consider the second term in~\eqref{eq:limJ}. Let $\phi_{n_j}:=\kernel*u_{n_j}$ and $\hat{\phi}:=\kernel*\hat{u}$. 
Since, $\hat{u}, u_{n_j}\in L^{\infty}(\DC)$, by Proposition~\ref{prop:kernel_prop} it follows that $\phi_{n_j},\hat{\phi}\in C(\DC)$.
Then, using the fact that $u_{n_j}\stackrel{\ast}{\rightharpoonup}\hat{u}$, $j\rightarrow\infty$, and $\kernel\in L^1(\R^n)$, we obtain that
\[
\int_{\DC}u_{n_j}(\xx)\kernel(|\x-\xx|)\d\xx\rightarrow\int_{\DC}\hat{u}(\xx)\kernel(|\x-\xx|)\d\xx,\quad j\rightarrow\infty,
\]
that is, $\phi_{n_j}(\x)\rightarrow\hat{\phi}(\x)$, $\forall\x\in\D$.
Then, by the Lebesgue dominated convergence theorem, we obtain
\[
\int_{\D}u_{n_j}\phi_{n_j}\d\x=\int_{\D}u_{n_j}\phi\d\x+\int_{\D}u_{n_j}(\phi_{n_j}-\phi)\d\x\rightarrow\int_{\D}\hat{u}\hat{\phi}\d\x,\quad j\rightarrow\infty.
\]
Finally, taking the limit in $\J(u_{n_j})$ we obtain
\[
\lim_{j\rightarrow\infty}\inf \J(u_{n_j})\geq \frac{1}{2}\|\sqrt{\xi}\hat{u}\|^2_{L^2(\D)}-(\hat{u},\kernel*\hat{u})_{L^2(\D)}+\frac{\cpot}{2}|\D|+
\norm{\hat{u}-u^{k-1}}^2_{\Va^\prime}=\J(\hat{u}).
\]
That is, 
\[
\J(\hat{u})\leq\lim_{j\rightarrow\infty}\inf \J(u_{n_j})=\lim_{n\rightarrow\infty}\J(u_n)=\inf_{u\in\mathcal{K}_m}\J(u)=\hat{\J}.
\]
On the other hand, by definition of $\hat{\J}$, it follows that $\hat{\J}\leq \J(\hat{u})$, and, hence $\hat{\J}=\J(\hat{u})$.
\end{proof}

\begin{theorem}[Uniqueness]\label{thm:uniqueness_semidiscrete}
Let $\kernel$ satisfy~\eqref{kernel_cond1} and $\xi(\x):=\cker(\x)-\cpot> 0$ for all $\x\in\overline{\D}$. Then, the solution of~\eqref{CH_VI_greens} is unique for $\tt<4C_\eta^{-2}(\xi/(1+C_P^4\ckerr^2)-\eta)$ (Case~1), and $\tt<4(\xi-\eta)/C_\eta^2$ ({\it Case~2}), where $C_\eta$, $0<\eta<\xi$ are as in Proposition~\ref{prop:kernel_prop}. 
\end{theorem}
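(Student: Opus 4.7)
The plan is the standard comparison argument. I suppose $u_1^k, u_2^k\in\mathcal{K}_m$ are two solutions of~\eqref{CH_VI_greens}, take $\psi=u_2^k$ as test in the inequality for $u_1^k$ and $\psi=u_1^k$ in the one for $u_2^k$, and add them. Setting $e:=u_1^k-u_2^k$, which satisfies $(e,1)_{L^2(\D)}=0$ by mass conservation, and using $F_0^\prime(u)=-\cpot u$ together with the splitting~\eqref{eq:split_b} of $b(e,e)$ and the identity $\cker=\xi+\cpot$, the $\cpot$-contributions cancel and one arrives at
\begin{equation*}
\tfrac{1}{\tt}\norm{e}_{\Va^\prime}^2+(\xi e,e)_{L^2(\D)}\leq(\kernel\ast e,e)_{L^2(\D)}.
\end{equation*}
The remainder of the proof is devoted to showing that, under the stated smallness condition on $\tt$, this inequality forces $e\equiv 0$.

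The main step is a careful bound on the convolution term. Applying Proposition~\ref{prop:kernel_prop}(i) I split $\kernel=\kernel_\eta+(\kernel-\kernel_\eta)$ with $\norm{\nabla\kernel_\eta}_{L^1(\R^n)}\leq C_\eta$ and $\norm{\kernel-\kernel_\eta}_{L^1(\R^n)}\leq\eta$. Young's convolution inequality~\eqref{youngs_conv} combined with Cauchy--Schwarz yields
\begin{equation*}
\bigl|((\kernel-\kernel_\eta)\ast e,e)_{L^2(\D)}\bigr|\leq\eta\norm{e}_{L^2(\DC)}\norm{e}_{L^2(\D)}\leq\eta\norm{e}_{L^2(\DC)}^2.
\end{equation*}
For the regularized part, $\kernel_\eta\ast e\in\Va$ (Young's again), and by the Green's operator identity~\eqref{greens_prop1} applied with test function $\kernel_\eta\ast e\in\Va$ (valid because $e$ is mean-free on $\D$),
\begin{equation*}
(\kernel_\eta\ast e,e)_{L^2(\D)}=(\nabla(\kernel_\eta\ast e),\nabla\G e)_{L^2(\D)}\leq\norm{\nabla(\kernel_\eta\ast e)}_{L^2(\D)}\norm{e}_{\Va^\prime}\leq C_\eta\norm{e}_{\Va^\prime}\norm{e}_{L^2(\DC)},
\end{equation*}
where I exploited $\nabla(\kernel_\eta\ast e)=(\nabla\kernel_\eta)\ast e$ together with~\eqref{youngs_conv}. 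Applying~\eqref{youngs} with parameter $\epsilon=\tt/2$ then absorbs a factor $\tfrac{1}{\tt}\norm{e}_{\Va^\prime}^2$ into the left-hand side and leaves $\tfrac{\tt C_\eta^2}{4}\norm{e}_{L^2(\DC)}^2$ on the right.

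Only the relation between $\norm{e}_{L^2(\DC)}$ and $\norm{e}_{L^2(\D)}$ distinguishes the two cases. In \emph{Case~2}, $\DC=\D$ and the two norms coincide, so (using $\xi(\x)\geq\xi>0$ on $\overline{\D}$) one obtains $\bigl(\xi-\eta-\tfrac{\tt C_\eta^2}{4}\bigr)\norm{e}_{L^2(\D)}^2\leq 0$; the announced bound $\tt<4(\xi-\eta)/C_\eta^2$ makes the coefficient strictly positive, forcing $e\equiv 0$. In \emph{Case~1}, since $\NM u_1^k=\NM u_2^k=0$ on $\DI$ by the definition of $\Vb$, the difference $e$ solves the exterior problem~\eqref{exterior_strong} with boundary datum $e|_{\D}$, so~\eqref{exterior_bdd} gives $\norm{e}_{L^2(\DI)}\leq C_P^2\ckerr\norm{e}_{L^2(\D)}$ and hence $\norm{e}_{L^2(\DC)}^2\leq(1+C_P^4\ckerr^2)\norm{e}_{L^2(\D)}^2$. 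Substituting produces $\bigl[\xi-(1+C_P^4\ckerr^2)\bigl(\eta+\tfrac{\tt C_\eta^2}{4}\bigr)\bigr]\norm{e}_{L^2(\D)}^2\leq 0$, which is strictly positive exactly when $\tt<4C_\eta^{-2}\bigl(\xi/(1+C_P^4\ckerr^2)-\eta\bigr)$.

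The main obstacle is that $\kernel$ is only assumed to lie in $L^1$, which prevents any direct $\Va^\prime$--$\Va$ pairing of $\kernel\ast e$ against $e$. The regularization device of Proposition~\ref{prop:kernel_prop} circumvents this by trading kernel smoothness for an approximation error, and the final bound on $\tt$ reflects that trade-off; it remains non-vacuous because one can always choose $\eta>0$ small enough to ensure $\xi/(1+C_P^4\ckerr^2)-\eta>0$ whenever $\xi>0$.
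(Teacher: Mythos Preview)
Your proof is correct and follows essentially the same approach as the paper's own proof: the comparison argument to obtain the key inequality, the splitting $\kernel=\kernel_\eta+(\kernel-\kernel_\eta)$ via Proposition~\ref{prop:kernel_prop}, the $\Va$--$\Va^\prime$ pairing for the smooth part (you phrase it through the Green's operator identity, the paper through the duality bracket, but these are the same step), Young's inequality with the same weight, and the exterior-problem estimate~\eqref{exterior_bdd} to handle \emph{Case~1}. The resulting conditions on $\tt$ are identical.
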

\begin{proof}
We prove it by contradiction. Let $u_1^k$, $u_2^k\in\mathcal{K}_m$ be two solutions of~\eqref{CH_VI_greens}, and let $\theta^k:=u_1^k-u_2^k\in\mathcal{K}_0$. Taking an addition of~\eqref{CH_VI_greens} tested with $\psi=u^k_2$, when $u_1^k$ is a solution, and vice-versa, and using~\eqref{eq:split_b} we obtain
\begin{multline*}
0\geq b(\theta^k,\theta^k)-\cpot\|\theta^k\|^2_{L^2(\D)}+\frac{1}{\tt}(\G (\theta^k),\theta^k)_{L^2(\D)}\\
=(\xi\theta^k,\theta^k)_{L^2(\D)}-(\kernel*\theta^k,\theta^k)_{L^2(\D)}+\frac{1}{\tt}\norm{\theta^k}^2_{\Vao^\prime}.
\end{multline*}
Invoking Proposition~\ref{prop:kernel_prop} and~\eqref{youngs_conv}, we can estimate the second term above
\begin{multline*}
\norms{(\kernel*\theta^k,\theta^k)_{L^2(\D)}}\leq
\norms{\dualar{\kernel_\eta*\theta^k,\theta^k}}+\norms{\left((\kernel-\kernel_\eta)*\theta^k,\theta^k\right)_{L^2(\D)}}\\
\leq\norm{\nabla\kernel_\eta*\theta^k}_{L^2(\D)}\norm{\theta^k}_{\Va^\prime}+\norm{\kernel-\kernel_\eta}_{L^1(\R^n)}\norm{\theta^k}_{L^2(\DC)}^2\\
\leq C_\eta\norm{\theta^k}_{L^2(\DC)}\norm{\theta^k}_{\Va^\prime}+\eta\norm{\theta^k}_{L^2(\DC)}^2\\
\leq \left({C_\eta^2\tt}/{4}+\eta\right)\norm{\theta^k}_{L^2(\DC)}^2+\frac{1}{\tt}\norm{\theta^k}_{\Va^\prime}^2,
\end{multline*}
where the last estimate is obtained by using Young's inequality~\eqref{youngs} with $\epsilon=2/(C_\eta\tt)$. 
Since $\theta^k\in\mathcal{K}_0$, we have that $\norm{\theta^k}_{\Va^\prime}=\norm{\theta^k}_{\Vao^\prime}$. 
By combining the previous estimates we obtain
\begin{equation}
(\xi\theta^k,\theta^k)_{L^2(\D)}-(C_\eta^2\tt/4+\eta)\norm{\theta^k}_{L^2(\DC)}^2\leq 0.\label{eq:tmp_xi}
\end{equation}
For \textit{Case~2}, the above simplifies to $((\xi-C_\eta^2\tt/4-\eta)\theta^k,\theta^k)_{L^2(\D)}\leq 0$, and, since $\tt<4(\xi-\eta)/C_\eta^2$, it follows that $\theta^k=0$, and hence $u_1^k=u_2^k$.
For \textit{Case~1}, we denote $\theta^k_\D:=\theta^k|_{\D}$, and using the fact that $\theta^k\in\Vb$, we obtain that $\theta^k$ solves the exterior problem~\eqref{exterior_form} on $\DI$ with $g=\theta^k_\D$ on $\D$, and from~\eqref{exterior_bdd}, we get
\[
\norm{\theta^k}_{L^2(\DI)}\leq C_P^2\ckerr\norm{\theta_\D^k}_{L^2(\D)}=C_P^2\ckerr\norm{\theta^k}_{L^2(\D)}.
\]
Then, the above estimate together with~\eqref{eq:tmp_xi} brings us to
\[
\left(\left(\xi-(C_\eta^2\tt/4+\eta)(1+C_P^4\ckerr^2)\right)\theta^k,\theta^k\right)_{L^2(\D)}\leq 0,
\]
which implies for $\tt<4C_\eta^{-2}(\xi/(1+C_P^4\ckerr^2)-\eta)$, that $\theta^k=0$, and hence $u_1^k=u_2^k$.
\end{proof}
\begin{corollary}
If under the assumptions of Theorem~\ref{thm:uniqueness_semidiscrete}, the kernel $\kernel$ additionally  satisfies~\eqref{kernel_cond2}, then $\eta=0$, $C_{\eta}=\ckerrgrad$, and the solution of~\eqref{CH_VI_greens} is unique if $\tt<4\xi/(\ckerrgrad^2(1+C_P^4\ckerr^2))$ (Case~1), and $\tt<4\xi/\ckerrgrad^2$ ({\it Case~2}).
\end{corollary}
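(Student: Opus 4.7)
The corollary is essentially an immediate specialization of Theorem~\ref{thm:uniqueness_semidiscrete} once we no longer need the mollification argument. My plan is therefore to revisit the single place in the proof of Theorem~\ref{thm:uniqueness_semidiscrete} where $\kernel$ was split as $\kernel = \kernel_\eta + (\kernel-\kernel_\eta)$, and observe that under the extra assumption \eqref{kernel_cond2} this splitting becomes unnecessary.

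More precisely, when $\kernel \in W^{1,1}(\R^n)$ we may take $\kernel_\eta := \kernel$ itself in Proposition~\ref{prop:kernel_prop}, so that $\kernel - \kernel_\eta \equiv 0$ and the parameter $\eta$ in \eqref{kernel_cond1}--\eqref{kernel_cond2} can be chosen equal to $0$; correspondingly $C_\eta = \norm{\nabla\kernel_\eta}_{L^1(\R^n)} = \norm{\nabla\kernel}_{L^1(\R^n)} = \ckerrgrad$ by the definition in~\eqref{cker}. Then the estimate on $(\kernel*\theta^k,\theta^k)_{L^2(\D)}$ carried out in the proof of Theorem~\ref{thm:uniqueness_semidiscrete} reduces to just the term $\norms{\dualar{\kernel*\theta^k,\theta^k}} \le \ckerrgrad\norm{\theta^k}_{L^2(\DC)}\norm{\theta^k}_{\Va^\prime}$, followed by Young's inequality \eqref{youngs} with $\epsilon = 2/(\ckerrgrad\tt)$, yielding a bound involving only $\ckerrgrad^2\tt/4$ instead of $C_\eta^2\tt/4 + \eta$.

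Substituting $\eta=0$ and $C_\eta=\ckerrgrad$ into the two bounds $\tt < 4 C_\eta^{-2}(\xi/(1+C_P^4\ckerr^2) - \eta)$ and $\tt < 4(\xi-\eta)/C_\eta^2$ from Theorem~\ref{thm:uniqueness_semidiscrete} then gives precisely the stated thresholds $\tt < 4\xi/(\ckerrgrad^2(1+C_P^4\ckerr^2))$ in \textit{Case~1} and $\tt < 4\xi/\ckerrgrad^2$ in \textit{Case~2}, so uniqueness follows at once.

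There is no real obstacle: the entire content is the observation that the mollification is only a device to handle kernels that are merely $L^1$, and is redundant as soon as $\kernel$ is already in $W^{1,1}(\R^n)$. The proof can therefore be written in just a few lines, simply invoking Theorem~\ref{thm:uniqueness_semidiscrete} with the specialization $\kernel_\eta = \kernel$ and pointing out the resulting simplification of the constants.
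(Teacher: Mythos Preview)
Your proposal is correct and matches the paper's approach: the paper does not even give an explicit proof of this corollary, treating it as an immediate consequence of Theorem~\ref{thm:uniqueness_semidiscrete} via the specialization $\kernel_\eta=\kernel$ (hence $\eta=0$, $C_\eta=\ckerrgrad$) that you describe. Your explanation of why this specialization is legitimate under~\eqref{kernel_cond2} and how the constants simplify is exactly the intended reasoning.
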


We have shown existence and uniqueness for~\eqref{CH_VI_greens}. Next, we show that~\eqref{CH_VI_greens} and~\eqref{CH_VI_tk} are equivalent. 
\begin{proposition}\label{prop:JP}
Under conditions of Theorem~\ref{thm:uniqueness_semidiscrete}, problems~\eqref{CH_VI_greens} and~\eqref{CH_VI_tk} admit unique solutions and are equivalent.
\end{proposition}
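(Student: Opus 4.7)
The plan is to leverage existence and uniqueness for~\eqref{CH_VI_greens}, already established in Theorems~\ref{thm:existence_semidiscrete}--\ref{thm:uniqueness_semidiscrete}, and transfer these properties to~\eqref{CH_VI_tk} by proving a direct equivalence of the two formulations. Once $u^k$ is unique, uniqueness of $w^k$ in~\eqref{CH_VI_tk} will follow: its mean-zero part is pinned down by the Green's operator, and its constant part by a scalar Lagrange multiplier associated with the mass constraint.

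For the implication \eqref{CH_VI_tk}$\Rightarrow$\eqref{CH_VI_greens}, I would first test the first equation with $\phi\equiv 1$ to obtain mass conservation, which forces $u^k\in\mathcal{K}_m$. Since $u^k-u^{k-1}\in L^2(\D)\cap\Vao^\prime$, the defining property~\eqref{greens_problem} of $\G$ yields the representation $w^k=-\tt^{-1}\G(u^k-u^{k-1})+\mu^k$ with $\mu^k:=(w^k,1)_{L^2(\D)}/|\D|\in\R$. Substituting this into the second inequality of~\eqref{CH_VI_tk} and restricting test functions to $\psi\in\mathcal{K}_m$ causes the contribution $(\mu^k,\psi-u^k)_{L^2(\D)}$ to vanish, and we recover~\eqref{CH_VI_greens}.

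For the reverse implication \eqref{CH_VI_greens}$\Rightarrow$\eqref{CH_VI_tk}, I would invoke Proposition~\ref{prop:JQ} to view $u^k\in\mathcal{K}_m$ as the unique minimizer of $\J$ on $\mathcal{K}$ subject to the affine mass constraint $(u,1)_{L^2(\D)}=m$. Slater's condition holds: for $m\in(-|\D|,|\D|)$ the constant $m/|\D|$ is strictly feasible for the pointwise constraint $|u|\leq 1$ while satisfying the equality constraint. Convex KKT duality then supplies a Lagrange multiplier $\mu^k\in\R$ such that the first-order optimality condition holds against all variations $\psi\in\mathcal{K}$. Setting $w^k:=-\tt^{-1}\G(u^k-u^{k-1})+\mu^k\in\Va$, the resulting inequality is exactly the second relation in~\eqref{CH_VI_tk}, and the first equation follows from~\eqref{greens_problem} together with the fact that mass conservation places $u^k-u^{k-1}\in\Vao^\prime$, so that~\eqref{greens_problem} extends from $\Vao$ to all of $\Va$.

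The main obstacle is the reverse direction: the inequality in~\eqref{CH_VI_greens} is posed against test functions in the mass-constrained set $\mathcal{K}_m$, and promoting it to the full set $\mathcal{K}$ used in~\eqref{CH_VI_tk} is not algebraic. It hinges on the existence of the scalar multiplier $\mu^k$ for the mass constraint, which is precisely what produces the constant part of $w^k$. Verifying Slater's condition for the affine mass constraint together with the pointwise constraint $|u|\leq 1$ is therefore the crucial analytical step; once this is in hand, the remainder of the argument reduces to a direct verification using the identities for $\G$ and $b(\cdot,\cdot)$ recorded in the Preliminaries, and existence and uniqueness for~\eqref{CH_VI_tk} then follow from Theorems~\ref{thm:existence_semidiscrete}--\ref{thm:uniqueness_semidiscrete}.
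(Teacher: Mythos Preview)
Your proposal is correct and follows essentially the same route as the paper: reduce the equivalence to the existence of a scalar Lagrange multiplier $\mu^k$ for the mass constraint, obtained from the optimization formulation~\eqref{min_problem} via a constraint qualification, and then define $w^k$ from $\G(u^k-u^{k-1})$ and $\mu^k$. The only minor difference is the specific constraint qualification: you invoke Slater's condition with the strictly feasible constant $m/|\D|$, whereas the paper verifies the Zowe--Kurcyusz--type regularity condition from \cite[Theorem~6.3]{troltzsch2010optimal} by showing that $G'(u^k)$ maps the tangent cone of $\mathcal{K}$ at $u^k$ onto all of $\R$; both are valid here and yield the same multiplier.
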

\begin{proof}
Since the problems~\eqref{CH_VI_greens} and~\eqref{min_problem} are equivalent, it suffices to show that~\eqref{min_problem} and~\eqref{CH_VI_tk} are equivalent. First, we show the existence of the Lagrange multiplier $\mu^k\in\R$ by verifying a constraint qualification condition~\cite[Theorem~6.3]{troltzsch2010optimal}. For $u\in\mathcal{K}$ and $m\in(|\D|,|\D|)$, we define the equality constraint $G(u)=(u,1)-m$. If $u^k\in\mathcal{K}_m$ is a unique minimizer of~\eqref{min_problem}, then $G(u^k)=0$, and we define the tangent cone
\[
C(u^k)=\{\alpha(u-u^k)\colon \alpha\geq 0,\; u\in\mathcal{K}\}.
\]
Taking into account that $G^\prime(u)=1$, we derive that
\[
S:=G^\prime({u}^k)\mathcal{C}({u}^k)=\left\{\alpha\int_{\D}(u-{u}^k)\d\x,\quad\alpha\geq 0,u \in \mathcal{K}\right\}.
\]
Since $u\in\mathcal{K}$ and $u^k\in\mathcal{K}_m$, it follows that $S\equiv\R$, {and a} constraint qualification is fulfilled. {By}~\cite[Theorem~6.3]{troltzsch2010optimal}, there exist $\mu^k\in\R$ such that
\[
(\nabla_{u}L({u}^k,\mu^k),v-{u}^k)_{L^2(\D)}\geq 0,\quad\forall v\in\mathcal{K},
\]
where $L(u,\mu):=\J(u)+\mu G(u)$ is the Lagrange function, and 
\[
\nabla_u L({u}^k,\mu)=\nabla \J({u}^k)+\mu\nabla G({u}^k)= B{u}^k+F_0^\prime({u}^k)+\frac{1}{\tt}\G({u}^k-u^{k-1})+{\mu^k}.
\]
Setting $w^k:=-\frac{1}{\tt}\G({u}^k-u^{k-1})-{\mu^k}$, and noting that $\frac{1}{\tt}\G({u}^k-u^{k-1})\in\Vao$, $\mu^k\in\R$, we obtain that $w^k\in\Va$, and $({u}^k,w^k)\in\mathcal{K}\times\Va$ is a solution of~\eqref{CH_VI_tk}. 
  \end{proof}

\subsection{Properties of the solution}\label{sec:sec:regularity_tk}
In this section we prove that, under certain conditions, the solution of the nonlocal Cahn-Hilliard problem~\eqref{CH_VI_tk} at each time step admits discontinuous solutions that imply sharp interfaces in the model. 

From~\eqref{CH_VIG_tk} and~\eqref{eq:split_b} we obtain that the following relation holds for $k=1,\dots,K$ a.e. in $\D$:
\begin{equation}
w^k= Bu^k+F^{\prime}_0(u^k)+\lambda^k =\xi u^k-\kernel*u^k+\lambda^k, 
\label{eq:w1}
\end{equation}
where, we recall $\xi(\x)=\cker(\x)-\cpot$. We define for $k=1,\dots,K$
\begin{equation*}
{g^k:=w^k+\kernel*u^k=-\frac{1}{\tt}\G({u}^k-u^{k-1})-{\mu^k}+\kernel*u^k,}
\end{equation*}
and from~\eqref{eq:w1} it follows that $g^k=\param u^k+\lambda^k$. 
Hence, for $\param(\x)>0$, $\x\in\overline{\D}$, {since $\lambda^k\in\partial{I}_{[-1,1]}(u^k)$, where we recall
\begin{equation}
\lambda^k\in\partial I_{[-1,1]}(u^k)=\begin{cases}
(-\infty,0] \quad&\text{if } \;\;u^k=-1,\\
0 \quad&\text{for } u^k\in(-1,1),\\
[0,+\infty) \quad&\text{if } \;\;u^k=1,
\end{cases}\label{eq:subdiff_tk}
\end{equation}
the solution $u^k$} at each time step $t^k$ can be {calculated} as a pointwise projection of $g^k/\param$ onto $[-1,1]$:
\begin{align}
u^k=P_{[-1,1]}\left(\frac{1}{\param}g^k\right)=\begin{cases}
1,\quad&\text{if }\;\; g^k\geq\param,\\
-1,\quad&\text{if }\;\; g^k\leq -\param,\\
{g^k}/{\param},\quad&\text{if }\;\; g^k\in(-\param,\param).
\end{cases}
\label{proj_u}
\end{align}
The projection formula~\eqref{proj_u} provides a crucial insight into the properties of the solution, such as {the} regularity stated below.
\begin{theorem}[Improved regularity]\label{thm:reg_tk}
Let $(u^k,w^k)\in\mathcal{K}\times\Va$ be the solution pair of~\eqref{CH_VI_tk}, and the kernel $\kernel$ satisfies~\eqref{kernel_cond1}--\eqref{kernel_cond2}. If $\xi(\x)>0$ for all $\x\in\overline{\D}$, then $u^k\in H^1(\D)$, for all $K=1,\dots,K$, and 
\begin{equation}
\norm{\nabla u^k}_{L^2(\D)}\leq\norm{\nabla(\xi^{-1}g^k)}_{L^2(\D)}\leq C\norm{g^k}_{H^1(\D)},\label{reg_tk_stability}
\end{equation}
where $C>0$ depends only on $\xi$ and $\kernel$. 
\end{theorem}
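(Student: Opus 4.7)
My plan is to exploit the pointwise projection representation derived just before the theorem statement, namely
\[
u^k = P_{[-1,1]}\!\left(\xi^{-1} g^k\right) \quad\text{a.e.\ in } \D, \qquad g^k := w^k + \kernel * u^k,
\]
and deduce the $H^1$-regularity of $u^k$ in three steps: (i) show $g^k \in H^1(\D)$; (ii) show that $\xi^{-1}$ is a bounded Lipschitz multiplier on $\D$; (iii) invoke the standard Stampacchia-type property that truncation by $P_{[-1,1]}$ preserves $H^1$ and does not increase the $L^2$-norm of the gradient.

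For step (i), $w^k \in \Va = H^1(\D)$ by definition of the solution pair. For the convolution term, Proposition~\ref{prop:max_principle} gives $u^k \in L^\infty(\DC)$, and under hypothesis~\eqref{kernel_cond2} Proposition~\ref{prop:kernel_prop}(iii) yields $\kernel * u^k \in W^{1,\infty}(\R^n) \hookrightarrow H^1(\D)$, so $g^k \in H^1(\D)$ with a quantitative bound in terms of $\|w^k\|_{H^1(\D)}$ and $\ckerrgrad$. For step (ii), in \emph{Case~1} $\cker$ is constant and there is nothing to do, while in \emph{Case~2} we write $\cker(\x) = (\kernel * \chi_{\D})(\x)$ and apply Proposition~\ref{prop:kernel_prop}(iii) once more to conclude $\cker \in W^{1,\infty}(\D)$; in either case $\xi = \cker - \cpot$ belongs to $W^{1,\infty}(\D)$. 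The strict positivity $\xi > 0$ on the compact set $\overline{\D}$ gives a uniform lower bound $\xi \geq \xi_{\min} > 0$, whence $\xi^{-1} \in W^{1,\infty}(\D)$ with $\|\nabla(\xi^{-1})\|_{L^\infty(\D)} \leq \xi_{\min}^{-2}\|\nabla\xi\|_{L^\infty(\D)}$. Consequently $\xi^{-1} g^k \in H^1(\D)$ and
\[
\|\nabla(\xi^{-1} g^k)\|_{L^2(\D)} \leq \|\xi^{-1}\|_{L^\infty(\D)}\|\nabla g^k\|_{L^2(\D)} + \|\nabla(\xi^{-1})\|_{L^\infty(\D)}\|g^k\|_{L^2(\D)} \leq C\|g^k\|_{H^1(\D)},
\]
with $C = C(\xi,\kernel)$.

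For step (iii), the pointwise truncation $P_{[-1,1]}(v) = \max(-1,\min(1,v))$ is the classical Lipschitz truncation: if $v \in H^1(\D)$ then $P_{[-1,1]}(v) \in H^1(\D)$ with $\nabla P_{[-1,1]}(v) = \chi_{\{|v|<1\}}\nabla v$ a.e., so $|\nabla P_{[-1,1]}(v)| \leq |\nabla v|$ pointwise. Applying this to the projection formula gives $u^k \in H^1(\D)$ and
\[
\|\nabla u^k\|_{L^2(\D)} \leq \|\nabla(\xi^{-1} g^k)\|_{L^2(\D)} \leq C\|g^k\|_{H^1(\D)},
\]
which is exactly~\eqref{reg_tk_stability}. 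The main delicate point in the argument is step (ii) for \emph{Case~2}: it is precisely the strengthened kernel regularity~\eqref{kernel_cond2}, promoting $\kernel \in L^1$ to $\kernel \in W^{1,1}$, that is needed to obtain $\cker \in W^{1,\infty}$ and hence to control $\nabla(\xi^{-1})$; without (H2) the multiplier $\xi^{-1}$ would not be differentiable and the Lipschitz truncation step would lose its quantitative gradient bound.
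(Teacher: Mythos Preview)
Your proposal is correct and follows essentially the same approach as the paper: both proofs establish $g^k\in H^1(\D)$ via $w^k\in\Va$ and Proposition~\ref{prop:kernel_prop}(iii), then use the projection formula together with the Stampacchia-type stability of the truncation $P_{[-1,1]}$ in $H^1$, handling \emph{Case~1} (constant $\xi$) trivially and \emph{Case~2} by showing $\xi^{-1}\in W^{1,\infty}(\D)$ from~\eqref{kernel_cond2} and the strict positivity of $\xi$ on $\overline\D$. Your write-up is in fact slightly more explicit than the paper's, spelling out $\cker=\kernel*\chi_\D$ in \emph{Case~2} and the pointwise gradient identity $\nabla P_{[-1,1]}(v)=\chi_{\{|v|<1\}}\nabla v$, whereas the paper simply cites ``stability of the $L^2$-projection in $H^1$''.
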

\begin{proof}
Since $\kernel\in W^{1,1}(\R^n)$ and $u^k\in L^{\infty}(\R^n)$ (by Proposition~\ref{prop:max_principle}), invoking Proposition~\ref{prop:kernel_prop} it follows that $\kernel*u^k\in W^{1,\infty}(\R^n)$. Since $w^k\in H^1(\D)$, we obtain that $g^k=w^k+\kernel*u^k\in {H^1(\D)}$.   Invoking the projection formula~\eqref{proj_u} and using the stability of the $L^2$-projection in $H^1$, we obtain the following estimate
\begin{equation}
\norm{\nabla u^k}_{L^2(\D)}\leq\norm{\nabla(\xi^{-1}g^k)}_{L^2(\D)}.\label{eq:stability_estimate}
\end{equation}
We note that for {\it Case~1}, $\xi(\x)$ is constant for all $\x\in\D$. Therefore, from the above estimate we immediately deduce that $\norms{u^k}_{H^1(\D)}\leq{\xi^{-1}}\norms{g^k}_{H^1(\D)}\leq C$, and, hence, $u\in H^1(\D)$. For {\it Case~2}, since $\kernel\in W^{1,1}(\R^n)$, we have that $\xi,\xi^{-1}\in W^{1,\infty}(\D)$. Indeed, since $\xi(\x)>0$ in $\overline{\D}$, it follows that there exists $\xi_{\min}>0$, such that $\xi(\x)\geq\xi_{\min}>0$, and 
\begin{equation}
\norm{\nabla(\xi^{-1})}_{L^{\infty}(\D)}\leq\xi_{\min}^{-2}\norm{\nabla\xi}_{L^{\infty}(\D)}\leq\xi_{\min}^{-2}\norm{\nabla\kernel}_{L^1(\R^n)}<\infty.\label{eq:lipschitz_xi}
\end{equation}
Then, using the product rule, the estimate~\eqref{eq:stability_estimate} reduces to
\begin{multline*}
\norms{u^k}_{H^1(\D)}\leq\norms{\xi^{-1}g^k}_{H^1(\D)}\leq\ckerr\norms{g^k}_{H^1(\D)}
+\norm{\nabla(\xi^{-1})}_{L^{\infty}(\D)}\norm{g^k}_{L^2(\D)}\\
\leq C\norm{g^k}_{H^1(\D)},
\end{multline*}
and, thus, $u^{{k}}\in H^1(\D)$. 
\end{proof}
\begin{corollary}
Let $\kernel$ satisfy~\eqref{kernel_cond1}--\eqref{kernel_cond2}. Then, for $\xi(\x)\geq 0$ for all $\x\in\D$, the Lagrange multiplier in~\eqref{CH_VIG_tk} fulfills $\lambda^k\in H^1(\D)$, $k=1,\dots,K$.
\end{corollary}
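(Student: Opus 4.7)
The plan is to read $\lambda^k$ directly off the complementarity system~\eqref{CH_VIG_tk} and express it in a form that is manifestly in $H^1(\D)$, bypassing the need to invoke $u^k \in H^1(\D)$ (which is available from Theorem~\ref{thm:reg_tk} only under the strict sign condition $\xi > 0$, whereas here only $\xi \geq 0$ is assumed). From~\eqref{eq:w1} one has a.e.\ in $\D$
\[
\lambda^k = w^k - \xi u^k + \kernel*u^k = g^k - \xi u^k, \qquad g^k := w^k + \kernel*u^k.
\]
A case analysis on the projection formula~\eqref{proj_u} shows that whenever $\xi(\x)>0$ the pointwise identity $\xi(\x)u^k(\x) = P_{[-\xi(\x),\xi(\x)]}(g^k(\x))$ holds; the same identity is trivially true on $\{\xi = 0\}$ since both sides vanish there. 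Rewriting the projection onto the symmetric interval through positive parts then yields
\[
\lambda^k(\x) = g^k(\x) - P_{[-\xi(\x),\xi(\x)]}\bigl(g^k(\x)\bigr) = (g^k - \xi)_+ - (-g^k - \xi)_+ \quad \text{a.e.\ in } \D.
\]

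Next, I would verify that each ingredient above lives in $H^1(\D)$. Since $u^k \in L^\infty(\R^n)$ by Proposition~\ref{prop:max_principle} and $\kernel \in W^{1,1}(\R^n)$ by~\eqref{kernel_cond2}, Proposition~\ref{prop:kernel_prop}(iii) gives $\kernel*u^k \in W^{1,\infty}(\R^n)$, so $g^k = w^k + \kernel*u^k \in H^1(\D)$ because $w^k \in \Va$. Moreover, $\xi = \cker - \cpot$ is constant in \textit{Case~1} and belongs to $W^{1,\infty}(\D)$ in \textit{Case~2}, as already used in~\eqref{eq:lipschitz_xi}. Hence $g^k \pm \xi \in H^1(\D)$, and the Stampacchia chain rule (i.e., $f \in H^1(\D) \Rightarrow f_+ \in H^1(\D)$ with $\nabla f_+ = \chi_{\{f > 0\}}\nabla f$) applied to the two positive parts and combined by subtraction yields $\lambda^k \in H^1(\D)$ for every $k=1,\dots,K$.

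The only mild obstacle is the potentially nontrivial set $\{\xi = 0\}$, on which the projection representation of $u^k$ used in Theorem~\ref{thm:reg_tk} is no longer meaningful. The proposed argument circumvents this by working with $\lambda^k$ instead of $u^k$: the map $(g,\xi)\mapsto g - P_{[-\xi,\xi]}(g)$ is globally Lipschitz, and the positive-part decomposition reduces the problem to the standard Sobolev truncation lemma, so the degeneracy of $\xi$ never enters the estimate.
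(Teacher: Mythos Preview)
Your argument is correct. The pointwise identity $\lambda^k = (g^k-\xi)_+ - (-g^k-\xi)_+$ indeed holds a.e.\ in $\D$ (you verified it on $\{\xi>0\}$ via the projection~\eqref{proj_u} and on $\{\xi=0\}$ directly from $\lambda^k=g^k$), and since $g^k\in H^1(\D)$ and $\xi\in W^{1,\infty}(\D)$ (the latter following from $\kernel\in W^{1,1}(\R^n)$ via $\cker=\kernel*\chi_{\DC}$, not from~\eqref{eq:lipschitz_xi} itself, which concerns $\xi^{-1}$), the Stampacchia truncation lemma gives $\lambda^k\in H^1(\D)$.

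The paper does not spell out a proof of the corollary, but its intended argument is transparent from the surrounding material and from the corresponding passage in the proof of Theorem~\ref{thm:existence_t}: one reads $\lambda^k = g^k - \xi u^k$ off~\eqref{eq:w1}, uses $g^k\in H^1(\D)$, and then treats the remaining term by a dichotomy---if $\xi>0$ on $\overline\D$, Theorem~\ref{thm:reg_tk} gives $u^k\in H^1(\D)$ and hence $\xi u^k\in H^1(\D)$; if $\xi\equiv 0$, the term $\xi u^k$ simply vanishes. Your route is genuinely different: by rewriting $\xi u^k = P_{[-\xi,\xi]}(g^k)$ and expressing $\lambda^k$ through positive parts, you bypass the regularity of $u^k$ altogether. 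This buys you a single unified argument that also covers the mixed situation (possible in \textit{Case~2}) where $\xi\geq 0$ vanishes on a set of positive measure but is not identically zero---a case the paper's two-branch reasoning does not literally address. The trade-off is that the paper's route is a one-line consequence of Theorem~\ref{thm:reg_tk} in the strictly positive regime, whereas yours requires the extra (though elementary) algebraic identity and the truncation lemma.
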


In contrast to the previous result, we show next that if $\xi=0$, the solution $u^k$ is generally not smoother {than $u^k\in L^{\infty}(\D)$}, and furthermore, it can posses jump-discontinuities, which {allow} that the solution can admit only pure phases.
\begin{theorem}[Sharp interfaces]\label{thm:properties_tk}
Let $(u^k,w^k)\in\mathcal{K}\times\Va$ be the solution pair of~\eqref{CH_VI_tk}, and the kernel $\kernel$ satisfies~\eqref{kernel_cond1}. If $\param(\x):=\cker(\x)-\cpot=0$, $\forall\x\in\D$, then it holds for all $k=1,\dots,K$ a.e. in $\D$:
\begin{equation}
u^k\in\begin{cases}
\{1\},\quad&\text{if }\;\; g^k > 0,\\
\{-1\},\quad&\text{if }\;\; g^k < 0,\\
[-1,1],\quad&\text{if }\;\; g^k = 0,
\end{cases}
\end{equation}
where $g^k=w^k+\kernel*u^k$. Thus, in the case that the function $g^k$ assumes the values zero only on a set of measure zero, i.e., $|\{g^k=0\}|=0$, the variable $u^k$ assumes only the extreme values $-1$ and $1$ almost everywhere. That is, the solution $u^k$, $k=1,\dots,K$ is discontinuous and {consists} only of pure phases $u^k=1$ and $u^k=-1$.
\end{theorem}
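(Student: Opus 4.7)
The plan is to combine the pointwise relation for $w^k$ derived in \eqref{eq:w1} with the complementarity conditions \eqref{CH_VIG_complem} for the Lagrange multipliers $\lambda_\pm^k$, exactly as in the derivation of the projection formula \eqref{proj_u}, but now in the degenerate regime $\xi\equiv 0$.

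First, I would recall that by Proposition~\ref{prop:JP} problem \eqref{CH_VI_tk} is equivalent to the KKT system \eqref{CH_VIG_tk}, so there exist $\lambda_\pm^k\in M$ with $\lambda^k:=\lambda_+^k-\lambda_-^k$ such that \eqref{eq:w1} holds pointwise a.e.\ in $\Omega$. Substituting $\xi\equiv 0$ gives $w^k=-\kernel*u^k+\lambda^k$, hence
\[
g^k:=w^k+\kernel*u^k=\lambda^k=\lambda_+^k-\lambda_-^k\quad\text{a.e.\ in }\D.
\]
Next, I would invoke the pointwise complementarity: $\lambda_\pm^k\geq 0$, $-1\le u^k\le 1$, together with
\[
\lambda_+^k\,(1-u^k)=0,\qquad \lambda_-^k\,(1+u^k)=0\quad\text{a.e.\ in }\D,
\]
which are the a.e.\ versions of the inequalities in \eqref{CH_VIG_complem} (obtained by choosing $\eta=0$ and $\eta=2\lambda_\pm^k$ in the variational form). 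A short observation is that $\lambda_+^k$ and $\lambda_-^k$ cannot both be strictly positive at the same point, since this would force $u^k=1$ and $u^k=-1$ simultaneously.

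With these tools in hand I would split into three pointwise cases. If $g^k(\x)>0$, then $\lambda_+^k(\x)-\lambda_-^k(\x)>0$ forces $\lambda_+^k(\x)>0$ (as $\lambda_-^k\ge 0$), and the complementarity relation $\lambda_+^k(1-u^k)=0$ yields $u^k(\x)=1$. Symmetrically, if $g^k(\x)<0$, then $\lambda_-^k(\x)>0$ and therefore $u^k(\x)=-1$. Finally, if $g^k(\x)=0$, we only know $\lambda_+^k(\x)=\lambda_-^k(\x)$, and since at most one of them is strictly positive they both vanish; the only surviving constraint is $u^k(\x)\in[-1,1]$.

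The sharp-interface conclusion is then immediate: on the complement of the level set $\{g^k=0\}$, which by assumption has positive measure, $u^k$ takes only the values $\pm 1$, so if $|\{g^k=0\}|=0$ the solution $u^k$ is discontinuous (as soon as both $\{g^k>0\}$ and $\{g^k<0\}$ have positive measure) and consists purely of the two phases. The only subtle point is the a.e.\ passage from the $L^2$-inequalities \eqref{CH_VIG_complem} to the pointwise complementarity used above; this is standard given that $\lambda_\pm^k\in M$ and $1\pm u^k\ge 0$ a.e., since the product of two nonnegative functions has zero integral iff it vanishes a.e.
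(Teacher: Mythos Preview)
Your argument is correct and follows essentially the same route as the paper: set $\xi=0$ in \eqref{eq:w1} to obtain $g^k=\lambda^k$ and then read off the three cases from the complementarity/subdifferential structure of $\lambda^k$. One minor quibble: your appeal to Proposition~\ref{prop:JP} is misplaced, since that result assumes the hypotheses of Theorem~\ref{thm:uniqueness_semidiscrete} (in particular $\xi>0$); you should instead cite directly the equivalence with \eqref{CH_VIG_tk} and the pointwise relation \eqref{eq:w1}, exactly as the paper does.
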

\begin{proof}
We recall that from~\eqref{eq:w1}, $g^k=\param u^k+\lambda^k$, where $\lambda^k=\lambda_{+}^k-\lambda_{-}^k$, $\lambda_{\pm}^k\geq 0$. Then, if $\param=0$, we obtain that $g^k=\lambda^k$, where we recall that for $k=1,\dots,K$ { $\lambda^k\in\partial{I}_{[-1,1]}(u^k)$~\eqref{eq:subdiff_tk} holds true.}
Hence, if $g^k>0$ we get $\lambda^k>0$ and $u^k=1$. Similarly, if $g^k<0$, then $\lambda^k<0$ and $u^k=-1$. Finally, for $g^k=0$ we obtain that $\lambda^k=0$ and $u^k\in[-1,1]$, and we conclude the proof.
\end{proof}
\begin{remark}
There are settings  for which we can guarantee that $|\{g^k=0\}|=0$. In particular, if we 
consider the steady state problem in $\D\subset\R^1$, i.e., for $K\to \infty$, $w^k \to-\hat{\mu}$ and $u^k\to\hat{u}$, where $\hat{u}$ and $\hat{\mu}$ are the steady state solution and mean-value, respectively. Then, taking, e.g., an analytic convolution kernel $\kernel$ we obtain that $\hat{g}:= \kernel*\hat{u}-{\hat{\mu}}$ is analytic and not constantly equal to zero, {unless $\hat{u}\equiv m$ everywhere}. Hence, {if $\hat{u}\not\equiv m$}, $|\{\hat{g}=0\}|=0$ always holds true and the solution admits only pure phases.
\end{remark}


\section{Continuous problem: Existence, uniqueness and a sharp interface condition}\label{sec:continuous}
In this section, we analyze the continuous in time problem~\eqref{CH_VI}. To derive the corresponding existence and uniqueness results, we analyze the semidiscrete problem defined in the previous section by taking the limit as $\tt\to 0$. We demonstrate that the projection formula~\eqref{proj_u} also remains valid for the continuous case. 

\subsection{Existence of a solution}
Let ${X}$ be either $\Vb$, $L^2(\D)$ or $\Va$, and recall that $\Q=(0,T)\times{\D}$ and $\QC=(0,T)\times\DC$. Then, for $K\in\mathbb{N}$, $\tt=T/K$, $T>0$, and a given sequence of functions $\{z^k\}_{k=1}^K\subset X$, we introduce piecewise constant and piecewise linear interpolants: 
\begin{equation}
\begin{aligned}
\inp{z}(t):=z^k,\quad \inpl{z}(t):=z^{k-1},\quad t\in(t_{k-1},t_k],\quad k=1,\dots,K,\\
\inpt{z}(t):=\frac{t-t_{k-1}}{\tt} z^{k}+\frac{t_k-t}{\tt}z^{k-1},\quad t\in[t_{k-1},t_k],\quad k=1,\dots,K.
\end{aligned}\label{eq:t_interpolant}
\end{equation}
In terms of the above notations, it follows that $\partial_t\inpt{z}=(z^{k}-z^{k-1})/\tt$, $t\in[t_{k-1},t_k]$, $k=1,\dots,K$. Then by setting $\inp{w}(t):=-\G(\partial_t\inpt{z}(t))-{\inp{\mu}(t)}$, the semi-discrete problem~\eqref{CH_VIG_tk} can be recast for a.e. $t\in (0,T)$ as follows: 
\begin{subequations}
\begin{align}
(\G(\partial_t\inpt{u}(t))+\xi\inp{u}(t)-\kernel*\inp{u}(t)+\inp{\lambda}(t)+\inp{\mu}(t),\psi)_{L^2(\D)}&=0,\;\;\forall\psi\in\Vb,\label{CH_VI_t1}\\
(\eta-\inppm{\lambda}(t),1\mp\inp{u}(t))_{L^2(\D)}&\geq 0,\;\;\forall\eta\in\M.\label{CH_VI_t2}
\end{align}\label{CH_VI_t}
\end{subequations}
Next, we establish the existence result, and invoking the projection formula~\eqref{proj_u} derive an improved regularity of the solution.

\begin{theorem}[Existence and improved regularity]\label{thm:existence_t}
Let $\kernel$ satisfy~\eqref{kernel_cond1}--\eqref{kernel_cond2}, and $\xi(\x):=\cker(\x)-\cpot\geq 0$ for all $\x\in{\D}$, then there exists a solution pair $(u(t),w(t))$ of~\eqref{CH_VI}, such that 
\[u\in L^{\infty}(\QC),\quad \partial_t u\in L^2(0,T;\Va^{\prime}), \quad\text{and}\quad w\in L^2(0,T;\Va).
\]
Moreover, if $\xi(\x)>0$ for all $\x\in\overline{\D}$, then 
\[
u\in W(0,T)\cap L^{\infty}(\QC) \quad\text{and}\;\; w\in L^2(0,T;\Va),
\]
where 
$W(0,T):=\left\{v\in L^2(0,T; \Va)\colon \partial_t v \in L^2(0,T;\Va^{\prime})\right\}$.
\end{theorem}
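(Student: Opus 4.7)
The plan is to pass to the limit $\tau\to 0$ in the time-discrete formulation~\eqref{CH_VI_t}, exploiting the variational structure of $\J$ and the pointwise projection formula~\eqref{proj_u} from Section~\ref{sec:sec:regularity_tk}.

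\emph{A priori estimates.} Since $u^k$ minimizes $\J$ over $\mathcal{K}_m$ and $u^{k-1}\in\mathcal{K}_m$, the energy decay $\J(u^k)\leq \J(u^{k-1})$ yields
\[
\tfrac12|u^k|_\Vb^2 + \int_\D F_0(u^k)\,\d\x + \tfrac{1}{2\tt}\|u^k-u^{k-1}\|_{\Va'}^2 \leq \tfrac12|u^{k-1}|_\Vb^2 + \int_\D F_0(u^{k-1})\,\d\x.
\]
Invoking the splitting~\eqref{eq:split_b} and using $\xi\geq 0$, Proposition~\ref{prop:max_principle} to get $|u^k|\leq 1$, and Young's inequality~\eqref{youngs_conv} to absorb $(\kernel*u^k,u^k)$, the right-hand side is uniformly bounded by the initial data. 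Summation over $k$ therefore provides $\|\inpt{u}\|_{L^\infty(\QC)}\leq 1$ and $\|\partial_t\inpt{u}\|_{L^2(0,T;\Va')}\leq C$. Because $w^k=-\G(\partial_t\inpt{u})-\mu^k$, identity~\eqref{greens_prop1} gives $|w^k|_\Va=\|\partial_t\inpt{u}\|_{\Va'}$, so $\inp{w}$ is uniformly bounded in $L^2(0,T;\Vao)$. Control of the mean $\mu^k$ is obtained by testing $w^k=Bu^k+F_0'(u^k)+\lambda^k$ against $1$ and using~\eqref{CH_VIG_complem} to bound $\int_\D\lambda^k$, yielding $\inp{w}$ bounded in $L^2(0,T;\Va)$.

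\emph{Compactness and passage to the limit.} Since $\Va\hookrightarrow\hookrightarrow L^2(\D)$ compactly, its dual $L^2(\D)\hookrightarrow\hookrightarrow\Va'$ is compact as well, so Aubin--Lions applied to $\inpt{u}\in L^\infty(0,T;L^2(\D))\cap H^1(0,T;\Va')$ yields a subsequence with $\inpt{u}\to u$ strongly in $C([0,T];\Va')$; combined with the uniform $L^\infty$ bound and interpolation this gives $\inpt{u},\inp{u},\inpl{u}\to u$ strongly in $L^p(\QC)$ for any $p<\infty$. By Proposition~\ref{prop:kernel_prop}, $\kernel*\inp{u}\to\kernel*u$ uniformly on $\DC$. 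Together with $\inp{w}\rightharpoonup w$ in $L^2(0,T;\Va)$ and $\partial_t\inpt{u}\rightharpoonup\partial_t u$ in $L^2(0,T;\Va')$, passage to the limit in~\eqref{CH_VI_t1} is immediate. For the complementarity inequality~\eqref{CH_VI_t2}, I use weak convergence of $\inppm{\lambda}=\inp{w}-\xi\inp{u}+\kernel*\inp{u}\mp \lambda_\mp$ in a suitable space together with the strong $L^2(\Q)$ convergence of $\inp{u}$ and the weak-$*$ closedness of the cone $\M$ to deduce $u(t)\in\mathcal{K}$ and the limiting complementarity conditions a.e.\ in time.

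\emph{Improved regularity when $\xi>0$.} If $\xi(\x)>0$ on $\overline\D$, Theorem~\ref{thm:reg_tk} applied pointwise in time yields
\[
\|\nabla u^k\|_{L^2(\D)}\leq C\bigl(\|w^k\|_\Va + \|\kernel*u^k\|_{H^1(\D)}\bigr)\leq C\bigl(\|w^k\|_\Va+\ckerrgrad\bigr),
\]
the last bound by~\eqref{youngs_conv} applied to $\nabla\kernel$ and $u^k\in L^\infty$. Squaring and summing over $k$ against $\tt$ produces a uniform bound on $\inp{u}$ in $L^2(0,T;\Va)$, so by weak lower semicontinuity $u\in L^2(0,T;\Va)$ and hence $u\in W(0,T)$.

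\emph{Main obstacles.} The principal difficulty is identifying the weak limit of $\inppm{\lambda}$ as an element of the subdifferential of the indicator function; this requires combining the strong $L^2(\Q)$-convergence of $\inp{u}$ from Aubin--Lions with the weak convergence of $\inppm{\lambda}$ and a monotonicity/lower semicontinuity argument on the complementarity pairing. A secondary but essential technical point is the uniform-in-$\tt$ bound on the mean $\mu^k$ of $w^k$, which must be extracted from the complementarity structure and is needed to upgrade the seminorm bound $|\inp{w}|_\Va$ to a full $\Va$-bound underlying the projection-formula estimate in the $\xi>0$ step.
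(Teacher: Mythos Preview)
Your compactness step contains a genuine gap: the interpolation claim that strong convergence in $C([0,T];\Va')$ together with a uniform $L^\infty(\QC)$ bound yields strong convergence in $L^p(\QC)$ for $p<\infty$ is false. A clean counterexample in one dimension is $u_n(t,x)=\sin(n\pi x)$ on $(0,T)\times(0,1)$: it is bounded by $1$ in $L^\infty$, has $\partial_t u_n=0$, and since $L^2(\D)\hookrightarrow\Va'$ compactly and $u_n\rightharpoonup 0$ in $L^2(\D)$, one gets $u_n\to 0$ strongly in $C([0,T];\Va')$; yet $\|u_n\|_{L^2(\Q)}^2=T/2$ does not tend to zero. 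Thus your argument provides only $\inp{u}\to u$ strongly in $L^2(0,T;\Va')$, not in $L^2(\Q)$, and the passage to the limit in the complementarity product $(\inppm{\lambda},1\mp\inp{u})_{L^2(\Q)}$ cannot be justified as you describe.

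The paper circumvents this by not seeking strong $L^2(\Q)$ convergence of $u$. Instead, it first obtains a uniform bound on $\inp{\lambda}$ in $L^2(0,T;\Va)$ (this is where the bound on $\inp{\mu}$ and the regularity of $\kernel*\inp{u}$ via \eqref{kernel_cond2} are used; for $\xi>0$ one additionally uses $\xi\inp{u}\in L^2(0,T;\Va)$ from the projection estimate). Then the complementarity product is rewritten as the $L^2(0,T;\Va)\times L^2(0,T;\Va')$ duality pairing, and weak convergence of $\inppm{\lambda}$ in $L^2(0,T;\Va)$ paired with strong convergence of $1\mp\inp{u}$ in $L^2(0,T;\Va')$ suffices. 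Note also that your proposed bound on $\mu^k$ by ``testing $w^k=Bu^k+F_0'(u^k)+\lambda^k$ against $1$'' is circular: it expresses $\mu^k$ in terms of $\int_\D\lambda^k$, which is itself unknown. The paper instead tests with $\psi=m-\inp{u}(t)$, which annihilates $\inp{\mu}$ and, via the complementarity identities $(\inppm{\lambda},1\mp\inp{u})=0$, yields a direct bound on $(\inpp{\lambda},1-m)+(\inpm{\lambda},1+m)$, hence on $\int_\D\inppm{\lambda}$.
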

\begin{proof}

\textbf{A~priori estimates.}
Since $|\inp{u}(t)|\leq 1$ {for $t\in (0,T)$ a.e.}, it immediately follows that $u\in L^{\infty}(\QC)$.
Next, we show that $\partial_t\inpt{u}\in L^2(0,T;\Va^\prime)$. From the energy minimization~\eqref{min_problem} and Proposition~\ref{prop:JQ} it follows that $\J(u^k)\leq \J(u^{k-1})$, $k=1,\dots,K$. That is,
\begin{align*}
\frac{1}{2\tt}\norm{u^k-u^{k-1}}^2_{\Va^\prime}\leq\frac{1}{2}\norms{u^{k-1}}^2_{\Vb}-\frac{1}{2}\norms{u^k}_{\Vb}^2-\frac{\cpot}{2}\norm{u^{k-1}}^2_{L^2(\D)}+\frac{\cpot}{2}\norm{u^k}^2_{L^2(\D)}.
\end{align*}
Then, utilizing the above expression we deduce
\begin{multline}
\norm{\partial_t\inpt{u}}^2_{L^2(0,T;\Va^\prime)}=\int_0^T\norm{\partial_t\inpt{u}}^2_{\Va^\prime}\d t=\sum_{k=1}^K\int_{t_{k-1}}^{t_k}\frac{1}{\tt^2}\norm{u^k-u^{k-1}}_{\Va^\prime}^2\d t\\
\leq \norms{u^0}^2_{\Vb}+\cpot\norm{u^K}^2_{L^2(\D)}\leq C. \label{eq:time_der}
\end{multline}

\textbf{Case~$\xi>0$.}
We consider~\eqref{CH_VI_t} with $\psi={\intm{m}}-u(t)\in\Vb$, where {$\intm{m}=\frac{1}{|\D|}\int_\D u(\x)\d\x={m}/{|\D|}$, $\intm{m}\in(-1,1)$}, and taking into account that 
$(\mu(t),{\intm{m}}-u(t))_{L^2(\D)}=0$ for $t\in (0,T)$ {a.e.}, we arrive at
\begin{align*}
-(\G(\partial_t\inpt{u}(t)),\inp{u}(t))_{L^2(\D)}+\left(\xi\inp{u}(t)-\kernel*\inp{u}(t)+\inp{\lambda}(t),\intm{m}-\inp{u}(t)\right)_{L^2(\D)}=0.
\end{align*}
From the complementarity conditions~\eqref{CH_VI_t2} it follows that $\inp{u}(t)=\pm 1$ if $\inppm{\lambda}(t)>0$. Then, using repeatedly Cauchy and Young's inequalities, and~\eqref{poincare_local}, we obtain
\begin{multline*}
0\leq (\inpp{\lambda},1-\intm{m})_{L^2(\D)}+(\inpm{\lambda}(t),1+\intm{m})_{L^2(\D)}=-(\inp{\lambda}(t),\intm{m}-\inp{u}(t))_{L^2(\D)}\\
\leq \norms{\dualar{\G(\partial_t\inpt{u}(t)),\inp{u}(t)}}+\norm{\xi\inp{u}(t)-\kernel*\inp{u}(t)}_{L^2(\D)}\norm{\intm{m}-\inp{u}(t)}_{L^2(\D)}\\
\leq\norm{\G(\partial_t\inpt{u}(t))}_{\Va}\norm{\inp{u}(t)}_{\Va^\prime}
+C\left(\norm{\xi\inp{u}(t)}_{L^2(\D)}+\norm{\inp{u}(t)}_{L^2(\DC)}\right)\\
\leq\norm{\G(\partial_t\inpt{u}(t))}^2_{\Va}
+C\left(\norm{\inp{u}(t)}_{\Va^\prime}^2+\norm{\xi\inp{u}(t)}_{L^2(\D)}+\norm{\inp{u}(t)}_{L^2(\DC)}\right)\\
\leq C\left(\norms{\G(\partial_t\inpt{u}(t))}^2_{\Va}+1\right).
\end{multline*}
Now integrating the last expression from $(0,T)$ and invoking the estimate~\eqref{eq:time_der}, we obtain that
\begin{equation*}
\int_0^T|\inp{\lambda}(t)|\d t\leq \int_{0}^T(\inpp{\lambda}(t)+\inpm{\lambda}(t))\d t \leq C\left(\int_{0}^T\norm{\partial_t\inpt{u}(t)}^2_{\Va^\prime}\d t+T\right)\leq C,
\end{equation*}
and, hence $\inp{\lambda}\in L^2(0,T;L^1(\D))$. Therefore, from~\eqref{CH_VI_t1} it follows that 
{
\[
\inp{\mu}=-\G(\partial_t\inpt{u})-\xi\inp{u}+\kernel*\inp{u}-\inp{\lambda}\in L^2(0,T;\R),
\]} and, hence, $-\G(\partial_t\inpt{u})-\inp{\mu}=:\inp{w}\in L^2(0,T;\Va)$, with the uniform bounds $\norm{\inp{\mu}}^2_{L^2(0,T;\R)}\leq C$ and $\norm{\inp{w}}^2_{L^2(0,T;\Va)}\leq C$. From the derived regularity estimates, and invoking again~\eqref{CH_VI_t1}, we immediately conclude that $\inp{\lambda}\in L^2(\Q)$, and $\norm{\inp{\lambda}}_{L^2(\Q)}\leq C$. 

Next, we are going to invoke the higher regularity results from Theorem~\ref{thm:reg_tk}. We introduce an interpolant of $g^k$, defined in Section~\ref{sec:sec:regularity_tk}, $\inp{g}(t)=\inp{w}(t)+\kernel*\inp{u}(t)$. From the projection formula~\eqref{proj_u}, we obtain that $\inp{u}=P_{[-1,1]}(\xi^{-1}\inp{g}(t))$ holds a.e. for $t\in (0,T)$, and from~\eqref{reg_tk_stability} it holds that
\[
\norm{\nabla\inp{u}(t)}_{L^2(\D)}^2\leq C\norm{\inp{g}(t)}^2_{H^1(\D)}.
\]  
Integrating the above inequality from $(0,T)$, and invoking~\eqref{eq:time_der},~\eqref{poincare_local}, and~\eqref{youngs_conv}, leads to
\begin{multline*}
\int_{0}^T\norm{\nabla\inp{u}(t)}_{L^2(\D)}^2\d t\leq C\int_{0}^T\norm{\inp{g}(t)}^2_{H^1(\D)}\d t\\
\leq 
C\left(\int_{0}^T\norms{\G(\partial_t\inpt{u}(t))}^2_{\Va}\d t +\int_{0}^T\norm{\inp{u}(t)}_{L^2(\DC)}^2\d t+\norm{\inp{\mu}}^2_{L^2(0,T;\R)}\right)\\
\leq C\left(\norm{\partial_t\inpt{u}}_{L^2(0,T;\Va^{\prime})}^2+\norm{\inp{u}}^2_{L^{\infty}(\QC)}+\norm{\inp{\mu}}^2_{L^2(0,T;\R)}\right)\leq C,
\end{multline*}
and, hence, $\inp{u}\in L^2(0,T;\Va)$. Since, $\xi\in W^{1,\infty}(\D)$~\eqref{eq:lipschitz_xi}, it follows that $\xi\inp{u}\in L^2(0,T;\Va)$, and, hence, from~\eqref{CH_VI_t} we immediately deduce that 
{
\[
\inp{\lambda}=-\G(\partial_t\inpt{u})-\xi\inp{u}+\kernel*\inp{u}-\inp{\mu}\in L^2(0,T;\Va).
\]}
Collecting the above estimates, we arrive at the following a priori energy bound
\[
\norm{\partial_t\inpt{u}}^2_{L^2(0,T;\Va^\prime)}+\norm{\inp{u}}^2_{L^2(0,T;\Va)}+\norm{\inp{\lambda}}^2_{L^2(0,T;\Va)}+\norm{\inp{\mu}}^2_{L^2(0,T;\R)}\leq C.
\]

\textbf{Case~$\xi=0$.} Following the same steps as above we obtain that $\inp{\lambda}\in L^2(\Q)$, $\inp{\mu}\in L^2(0,T;\R)$, and $\inp{w}\in L^2(0,T;\Va)$ with the corresponding uniform norm bounds. However, in contrast to the the previous case, here the projection formula~\eqref{proj_u} does not hold, and, in general, we can not expect a solution to have an improved regularity. Nevertheless, we still can derive an improved regularity for the Lagrange multiplier $\inp{\lambda}$. Indeed, using that $\kernel*\inp{u}\in L^2(0,T;\Va)$ for $\kernel\in W^{1,1}(\R^n)$, $\inp{u}\in L^{\infty}(\QC)$,  and $\G(\partial_t\inpt{u})\in L^2(0,T;\Va)$, $\inp{\mu}\in L^2(0,T;\R)$, we conclude from~\eqref{CH_VI_t1}, that $\inp{\lambda}\in L^2(0,T;\Va)$, and we have the following estimate:
\[
\norm{\partial_t\inpt{u}}^2_{L^2(0,T;\Va^\prime)}+\norm{\inp{u}}^2_{L^2(0,T;\Vb)}+\norm{\inp{\lambda}}^2_{L^2(0,T;\Va)}+\norm{\inp{\mu}}^2_{L^2(0,T;\R)}\leq C.
\]

\textbf{Limit.} From the a~priori estimates and the Banach-Alaouglu theorem, there exist functions $u$, $\lambda$ and $\mu$ such that
\begin{align*}
&(\xi\geq 0) &&\inp{u}\rightharpoonup u \quad &\text{{weakly-* in}}\quad  &L^{\infty}(\QC),\\
&(\xi >0) &&\inp{u}\rightharpoonup u \quad &\text{{weakly-* in}}\quad  &L^{\infty}(0,T;\Va),\\
& &&\partial_t\inpt{u}\rightharpoonup \partial_t u \quad &\text{{weakly in}}\quad  &L^2(0,T;\Va^\prime),\\
& &&\inp{\mu}\rightharpoonup \mu\quad &\text{{weakly in}}\quad  &L^2(0,T;\R),\\
& &&\inp{\lambda}\rightharpoonup \lambda \quad &\text{{weakly in}}\quad  &L^2(0,T;\Va),
\end{align*}
as $\tt\to 0$ (or equivalently, as $K\to\infty$), where by a slight abuse of notation we keep the same subscript for convergent sub-sequences. Next, we are going to pass to the limit $\tt\to 0$ in an equivalent time-integrated variant of~\eqref{CH_VI_t}. That is, we consider
\begin{equation}
\begin{aligned}
(\G(\partial_t\inpt{u})+\xi\inp{u}-\kernel*\inp{u}+\inp{\lambda}+\inp{\mu},\psi)_{L^2(\Q)}=0,\;\;\forall\psi\in L^2(0,T;\Vb),\\
(\eta-\inppm{\lambda},1\mp\inp{u})_{L^2(\Q)}\geq 0,\;\;\forall\eta\in L^2(0,T;\M),
\end{aligned}\label{CH_VI_T}
\end{equation}
where now, letting $\tt\to 0$ and using the linearity of the first equation in~\eqref{CH_VI_T}, we obtain that the limiting functions $u$, $\lambda$ and $\mu$ satisfy the first equation in~\eqref{CH_VIGt}. 
To be able to pass to the limit in the {variational} inequality constraints~\eqref{CH_VI_t2}, and to deal with the induced nonlinearity, we need to upgrade a weak convergence to a strong one. Using the Aubin-Lions compactness lemma we obtain that the embedding $L^2(0,T;L^2(\D))\cap H^1(0,T;\Va^\prime)\hookrightarrow L^2(0,T;\Va^\prime)$ is compact, and hence, we obtain that
\[
\inpt{u}\rightarrow u\quad\text{strongly in}\;\; L^2(0,T;\Va^\prime). 
\]
Next, we show we show that $\inp{u}\to u$ strongly in $L^2(0,T;\Va^\prime)$. First, we note that the following estimate holds true for piecewise linear and piecewise constant interpolants, see, e.g.,~\cite[Proposition~3.9]{coli2018}:
\[
\norm{\inpt{u}-\inp{u}}_{L^2(0,T;\Va^\prime)}\leq C\tt\norm{\partial_t\inpt{u}}_{L^2(0,T;\Va^\prime)}. 
\]
Then, using the above estimate and a triangle inequality we arrive at
\[
\norm{\inp{u}-u}_{L^2(0,T;\Va^\prime)}\leq\norm{\inp{u}-\inpt{u}}_{L^2(0,T;\Va^\prime)}+\norm{\inpt{u}-u}_{L^2(0,T;\Va^\prime)}\to 0,
\]
as $\tt\to 0$ (or equivalently, as $K\to\infty$). Similarly as in~\eqref{CH_VIG_complem} we can decompose the complementarity conditions~\eqref{CH_VI_T} as
\[
\int_{0}^T(\eta(t),1\mp\inp{u}(t))_{L^2(\D)}\d t\geq 0\quad\text{and}\quad\int_{0}^T
(\inppm{\lambda}(t),1\mp\inp{u}(t))_{L^2(\D)}\d t =0.\]
Now passing to the limit $\tt\to 0$ in the above expressions and using the fact that the inner product of strong and weak convergences converges, we obtain
\begin{align*}
0&\geq (\eta,\inp{u}\mp 1)_{L^2(\Q)}\to (\eta,{u}\mp 1)_{L^2(\Q)},\\
0&=\dual{\inppm{\lambda},1\mp \inp{u}}_{L^2(0,T;\Va)\times L^2(0,T;\Va^\prime)} \to \dual{{\lambda}_{\pm},1\mp {u}}_{L^2(0,T;\Va)\times L^2(0,T;\Va^\prime)},
\end{align*}
that leads that the limiting functions $\lambda=\lambda_{+}-\lambda_{-}$ and $u$ satisfy the complementarity conditions in~\eqref{CH_VIGt}, and since the problems~\eqref{CH_VIGt} and~\eqref{CH_VI} are equivalent, this concludes the proof. 
\end{proof}

\subsection{Uniqueness and continuous dependence result}
\begin{theorem}\label{thm:uniqueness_t}
Let $\kernel$ satisfy~\eqref{kernel_cond1}--\eqref{kernel_cond2}, and $\xi(\x):=\cker(\x)-\cpot>0$ for all $\x\in\overline{\D}$, then the solution $u\in W(0,T)\cap L^\infty(\QC)$ of the problem~\eqref{CH_VI} is unique. Moreover, if $u_1$ and $u_2$ are two solutions of~\eqref{CH_VI} with the initial conditions $u_1^0=u_1(0)$ and $u_2^0=u_2(0)$, respectively, then we have the following continuous dependence result 
\begin{equation}
\norm{u_1(t)-u_2(t)}^2_{\Va^\prime}\leq e^{C T}\norm{u_1^0-u_2^0}^2_{\Va^\prime},\quad\text{for a.e. }t\in(0,T),
\label{eq:cont_depend}
\end{equation}
where a constant $C>0$ is independent of the initial condition. 
\end{theorem}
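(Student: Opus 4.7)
Uniqueness follows at once from \eqref{eq:cont_depend} by setting $u_1^0=u_2^0$, so I concentrate on the continuous-dependence estimate. The argument mirrors the proof of Theorem~\ref{thm:uniqueness_semidiscrete}: the discrete Green's pairing $\tt^{-1}(\G\theta^k,\theta^k)_{L^2(\D)}$ is replaced by its time-continuous analogue, and property~\eqref{greens_prop2} converts it into a time derivative of $\norm{\theta}^2_{\Va^\prime}$. The regularity $u_i\in W(0,T)$ and $w_i\in L^2(0,T;\Va)$ granted by Theorem~\ref{thm:existence_t} in the case $\param>0$ justifies every manipulation. I treat the equal-initial-mass case in detail; the general case is recovered by splitting $\theta=\theta_0+\bar\theta$ into its mean-free part and its (time-constant) mean, running the argument on $\theta_0$, and using $\norm{\theta}^2_{\Va^\prime}=\norm{\theta_0}^2_{\Vao^\prime}+|\D|\bar\theta^2$ together with $\bar\theta(t)\equiv\bar\theta(0)$.

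Set $\theta:=u_1-u_2$ (mean-free under the above reduction) and $w:=w_1-w_2$. Inserting $\psi=u_2$ into the VI in~\eqref{CH_VI} for $u_1$ and $\psi=u_1$ into the one for $u_2$ and adding, using $F_0^\prime(u)=-\cpot u$ and bilinearity of $b$, yields
\begin{equation*}
(w(t),\theta(t))_{L^2(\D)}\geq b(\theta(t),\theta(t))-\cpot\norm{\theta(t)}^2_{L^2(\D)}\quad\text{a.e. }t\in(0,T).
\end{equation*}
Subtracting the first equation of~\eqref{CH_VI} for the two solutions and testing it with $\G\theta\in\Vao\subset\Va$ (admissible because $\theta$ is mean-free), then using~\eqref{greens_prop2}, produces
\begin{equation*}
\tfrac{1}{2}\tfrac{d}{dt}\norm{\theta(t)}^2_{\Va^\prime}+(w(t),\theta(t))_{L^2(\D)}=0.
\end{equation*}
Combining the two displays and invoking~\eqref{eq:split_b} to rewrite $b(\theta,\theta)-\cpot\norm{\theta}^2_{L^2(\D)}=(\param\theta,\theta)_{L^2(\D)}-(\kernel*\theta,\theta)_{L^2(\D)}$ gives
\begin{equation*}
\tfrac{1}{2}\tfrac{d}{dt}\norm{\theta(t)}^2_{\Va^\prime}+(\param\theta(t),\theta(t))_{L^2(\D)}\leq(\kernel*\theta(t),\theta(t))_{L^2(\D)}.
\end{equation*}

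The remaining convolution estimate is the only nontrivial step and duplicates the key estimate in the proof of Theorem~\ref{thm:uniqueness_semidiscrete}: approximate $\kernel$ by $\kernel_\eta\in W^{1,1}(\R^n)$ via Proposition~\ref{prop:kernel_prop}, split $(\kernel*\theta,\theta)=\dualar{\kernel_\eta*\theta,\theta}+((\kernel-\kernel_\eta)*\theta,\theta)_{L^2(\D)}$, bound the smooth part by duality and Young's convolution inequality as $|\dualar{\kernel_\eta*\theta,\theta}|\leq\norm{\nabla\kernel_\eta*\theta}_{L^2(\D)}\norm{\theta}_{\Va^\prime}\leq C_\eta\norm{\theta}_{L^2(\DC)}\norm{\theta}_{\Va^\prime}$, and control the remainder by $\eta\norm{\theta}^2_{L^2(\DC)}$. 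In \textit{Case~1} the exterior bound~\eqref{exterior_bdd} further reduces $\norm{\theta}_{L^2(\DC)}$ to $\norm{\theta}_{L^2(\D)}$. Young's numerical inequality then absorbs all $\norm{\theta}^2_{L^2(\D)}$ contributions into the coercive $(\param\theta,\theta)_{L^2(\D)}$ term (using $\param\geq\param_{\min}>0$) and leaves a residual $C\norm{\theta}^2_{\Va^\prime}$, so that
\begin{equation*}
\tfrac{d}{dt}\norm{\theta(t)}^2_{\Va^\prime}\leq C\norm{\theta(t)}^2_{\Va^\prime}.
\end{equation*}
Gronwall's lemma delivers~\eqref{eq:cont_depend}. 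The main obstacle is this kernel-smoothing estimate, which is however inherited directly from the semi-discrete analysis; beyond it, the only delicate point is the use of~\eqref{greens_prop2}, which is legitimate because $\theta\in H^1(0,T;\Va^\prime)\cap L^2(0,T;\Va)$ by Theorem~\ref{thm:existence_t}.
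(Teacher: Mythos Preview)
Your proof is correct and follows essentially the same route as the paper: test the difference of the first equation with $\G\theta$, use~\eqref{greens_prop2}, subtract the two variational inequalities to control $(w,\theta)_{L^2(\D)}$ from below by $(\param\theta,\theta)_{L^2(\D)}-(\kernel*\theta,\theta)_{L^2(\D)}$, estimate the convolution via duality and Young's inequalities, and conclude with Gronwall. Two minor simplifications are available: since the theorem assumes~\eqref{kernel_cond2}, $\kernel$ itself lies in $W^{1,1}(\R^n)$ and the approximation $\kernel_\eta$ from Proposition~\ref{prop:kernel_prop} is unnecessary---the paper uses $\ckerrgrad=\norm{\nabla\kernel}_{L^1(\R^n)}$ directly, i.e.\ $\eta=0$; and since~\eqref{CH_VI} is posed with a fixed initial mass $u_0\in\mathcal{K}_m$, both solutions automatically have mean $m$, so $\theta$ is mean-free without any further reduction.
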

\begin{proof}
Let $(u_1,w_1)$ and $(u_2,w_2)$ be two solutions of~\eqref{CH_VI} with the corresponding initial conditions $u^0_1=u_1(0)$ and $u_2^0=u_2(0)$. We define $u(t):=u_1(t)-u_2(t)$, $w(t):=w_1(t)-w_2(t)$ and $u(0)=u_1(0)-u_2(0)$. Next, we consider~\eqref{CH_VI} for solutions $(u_1,w_1)$ and $(u_2,w_2)$. Taking a difference and choosing the test function $\phi=\G u(t)$, noting that $u(t)\in \Vb\subset\Va^\prime$, and exploiting~\eqref{greens_prop2}, we arrive at
\begin{equation}
\frac{\d}{\d t}\norm{u(t)}^2_{\Va^\prime}+(w(t),u(t))_{L^2(\D)}=0,\quad\text{a.e. for } t\in(0,T).  \label{eq:uniq_proof1}
\end{equation}
We consider the second inequality in the problem~\eqref{CH_VI} for $(u_1,w_1)$ and $(u_2,w_2)$ tested with $\phi=u_2$ and $\phi=u_1$, respectively. Then, {taking the difference of the two} inequalities and accounting for~\eqref{eq:split_b} we obtain that
\begin{equation}
(\xi u(t),u(t))_{L^2(\D)}-(\kernel* u(t),u(t))_{L^2(\D)}-(w(t),u(t))_{L^2(\D)}\leq 0. \label{eq:uniq_proof2}
\end{equation}
Using similar techniques as in the proof of Theorem~\ref{thm:uniqueness_semidiscrete} we can estimate the second term in the above inequality:
\begin{multline}
\norms{(\kernel*u(t),u(t))_{L^2(\D)}}=\norms{\dualar{\kernel*u(t),u(t)}}\leq \norm{\nabla\kernel*u(t)}_{L^2(\D)}\norm{u(t)}_{\Va^\prime}\\
\leq \ckerrgrad\norm{u(t)}_{L^2(\DC)}\norm{u(t)}_{\Va^\prime}
\leq \ckerrgrad\left(\frac{\epsilon}{2}\norm{u(t)}^2_{L^2(\DC)}+\frac{1}{2\epsilon}\norm{u(t)}^2_{\Va^\prime}\right)\\
\leq\frac{\ckerrgrad\epsilon}{2}(1+C_P^4\ckerr^2)\norm{u(t)}^2_{L^2(\D)}+\frac{\ckerrgrad}{2\epsilon}\norm{u(t)}_{\Va^\prime}^2,\label{eq:uniq_proof3}
\end{multline}
with $\epsilon>0$. Since, $\xi(\x)\geq 0$ in $\overline{\D}$, we obtain that there exists $\xi_{\min}>0$, such that $\xi(\x)\geq\xi_{\min}>0$. Then, combining~\eqref{eq:uniq_proof2} and~\eqref{eq:uniq_proof3} we obtain
\[
\left(\xi_{\min}-\frac{\ckerrgrad\epsilon}{2}(1+C_P^4\ckerr^2)\right)\norm{u(t)}^2_{L^2(\D)}-\frac{\ckerrgrad}{2\epsilon}\norm{u(t)}^2_{\Va^\prime}-(w(t),u(t))_{L^2(\D)}\leq 0.
\]
Choosing $\epsilon=\xi_{\min}\ckerrgrad^{-1}(1+C_P^4\ckerr^2)^{-1}$ and 
adding the above inequality to~\eqref{eq:uniq_proof1} leads to
\[
\frac{\d}{\d t}\norm{u(t)}^2_{\Va\prime}+\frac{\xi_{\min}}{2}\norm{u(t)}^2_{L^2(\D)}\leq C\norm{u(t)}^2_{\Va^\prime}.
\]
Now, applying the Gronwall lemma we obtain the desired estimate and conclude the proof. 
\end{proof}

\subsection{Properties of the solution}
Similarly as in the time discrete case discussed in Section~\ref{sec:sec:regularity_tk}, for $\xi(\x)>0$, $\forall\x\in\overline{\D}$, we obtain the projection formula that holds a.e. in $(0,T)\times\D$:
\begin{align}
u(t)=P_{[-1,1]}\left(\frac{1}{\param}g(t)\right)=\begin{cases}
1,\quad&\text{if }\;\; g(t)\geq\param,\\
-1,\quad&\text{if }\;\; g(t)\leq -\param,\\
({1}/{\param})g(t),\quad&\text{if }\;\; g(t)\in(-\param,\param),
\end{cases}
\label{proj_ut}
\end{align}
where, we recall, $g(t):=w(t)+\kernel*u(t)$. Analogously, for $\xi(\x)=0$, $\x\in\D$, the solution can admit sharp interfaces.
\begin{theorem}[Sharp interfaces]\label{thm:sharp_interfaces_t}
Let $(u(t),w(t))$ be the solution pair of~\eqref{CH_VI}, and $\kernel$ satisfy~\eqref{kernel_cond1}. If $\param(\x):=\cker(\x)-\cpot=0$, $\forall\x\in\D$, then it holds a.e. in $(0,T)\times\D$:
\begin{equation}
u(t)\in\begin{cases}
\{1\},\quad&\text{if }\;\; g(t) > 0,\\
\{-1\},\quad&\text{if }\;\; g(t) < 0,\\
[-1,1],\quad&\text{if }\;\; g(t)= 0,
\end{cases}
\end{equation}
where $g(t)=w(t)+\kernel*u(t)$, and, if $|\{g(t)=0\}|=0$ a.e. $t\in(0,T)$, then $u(t)$ assumes only phases $-1$ and $1$ a.e. in $(0,T)\times\D$.
\end{theorem}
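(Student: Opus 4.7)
The plan is to mirror the argument of Theorem~\ref{thm:properties_tk} (the semi-discrete sharp interface result), replacing the time-indexed objects $u^k$, $w^k$, $\lambda^k$, $g^k$ by their continuous counterparts $u(t)$, $w(t)$, $\lambda(t)$, $g(t)$, and invoking the existence theorem (Theorem~\ref{thm:existence_t}) to justify the pointwise manipulations a.e.\ in $(0,T)\times\D$.

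First I would pass from the variational inequality formulation~\eqref{CH_VI} to the equivalent saddle point formulation~\eqref{CH_VIG} (or its time-integrated version~\eqref{CH_VIGt}) which introduces the Lagrange multipliers $\lambda(t) = \lambda_+(t) - \lambda_-(t)$ with $\lambda_\pm(t)\in\M$ and $\mu(t)\in\R$. Under the assumption that the existence result applies (which only requires $\xi\geq 0$, and in particular covers the case $\xi=0$ analyzed in Theorem~\ref{thm:existence_t}), one has $u\in L^\infty(\QC)$, $w\in L^2(0,T;\Va)$, $\lambda\in L^2(0,T;\Va)$, and $\mu\in L^2(0,T;\R)$, together with the complementarity conditions~\eqref{CH_VIG_complem}, all holding pointwise a.e.\ on $\Q$.

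Next I would extract the pointwise identity for $g(t)$. From the first equation in~\eqref{CH_VIG} and the identity~\eqref{eq:split_b}, combined with the fact that $\mu(t)\in\R$ appears through the reconstruction $w(t)=-\G(\partial_t u(t))-\mu(t)$, one obtains that a.e.\ in $(0,T)\times\D$
\begin{equation*}
w(t) = Bu(t) + F_0'(u(t)) + \lambda(t) = \xi u(t) - \kernel*u(t) + \lambda(t),
\end{equation*}
so $g(t) := w(t) + \kernel*u(t) = \xi u(t) + \lambda(t)$. Specialising to $\xi\equiv 0$ yields $g(t) = \lambda(t)$ a.e. Since~\eqref{CH_VIG_complem} enforces $\lambda_+(t)\geq 0$ with $\lambda_+(t)=0$ wherever $u(t)<1$, and $\lambda_-(t)\geq 0$ with $\lambda_-(t)=0$ wherever $u(t)>-1$, we recover the selection $\lambda(t)\in\partial I_{[-1,1]}(u(t))$ a.e. Reading this off componentwise gives the trichotomy in the statement: $g(t)>0$ forces $\lambda(t)>0$ and therefore $u(t)=1$; $g(t)<0$ forces $\lambda(t)<0$ and therefore $u(t)=-1$; and $g(t)=0$ leaves $u(t)\in[-1,1]$ unconstrained.

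The final claim is then a direct measure-theoretic consequence: if for a.e.\ $t\in(0,T)$ the set $\{g(t)=0\}$ has zero $n$-dimensional Lebesgue measure, then outside a null set $u(t)\in\{-1,+1\}$, i.e.\ $u$ attains only pure phases almost everywhere in $(0,T)\times\D$. I do not anticipate a genuine obstacle here; the whole argument is algebraic once the saddle point formulation with $\lambda\in L^2(0,T;\Va)$ and the complementarity conditions are in place. The only point requiring a little care is the justification that the identity $w = \xi u - \kernel*u + \lambda$ and the complementarity relations indeed hold pointwise a.e.\ on $\Q$ (rather than merely in an integrated sense), which is precisely what Theorem~\ref{thm:existence_t} guarantees via the regularity $\lambda,w\in L^2(0,T;\Va)$ and $\kernel*u\in L^\infty(\QC)$ obtained from~\eqref{kernel_cond2} and Proposition~\ref{prop:kernel_prop}.
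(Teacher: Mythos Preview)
Your proposal is correct and follows essentially the same route as the paper. The paper does not write out a separate proof for the continuous-time sharp interface theorem; it is understood to be the direct analogue of Theorem~\ref{thm:properties_tk}, carried over via the identity $g(t)=\xi u(t)+\lambda(t)$ and the subdifferential characterization $\lambda(t)\in\partial I_{[-1,1]}(u(t))$---exactly as you outline. One minor remark: the theorem hypothesis only asks for~\eqref{kernel_cond1}, and since a solution pair is \emph{assumed}, you do not actually need to invoke~\eqref{kernel_cond2} or the full regularity of Theorem~\ref{thm:existence_t}; the pointwise identity and complementarity relations follow from the equivalence of~\eqref{CH_VI} and~\eqref{CH_VIG} together with $\kernel*u\in C(\DC)$ from Proposition~\ref{prop:kernel_prop}(ii), which requires only~\eqref{kernel_cond1}.
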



\section{Numerical results}\label{sec:numerics}
In this section we present numerical experiments for one and two-dimensional cases for the nonlocal operator~\eqref{operatorB} defined for {\it Case~1} and {\it Case~2}.

\subsection{Discretization}
Let $\D\subset\R^n$, $d\geq 1$, be a polygonal domain. 
We partition $\DD$ into a shape regular quasi-uniform triangulation $\{\mathcal{T}_h\}_h$ such that there exists a proper triangulation $\{\mathcal{T}^\prime_h\}_h\subset\{\mathcal{T}_h\}_h$ of $\D$, that respects the boundary of $\D$. We denote by $h$ the maximum diameter of the elements $K\in\mathcal{T}_h$ and set $\overline{\D^h\cup\DI^h}=\cup_{K\in\mathcal{T}_h}\overline{K}$ and $\overline{\D^h}=\cup_{K\in\mathcal{T}^\prime_h}\overline{K}$.

We employ the implicit Euler time stepping scheme {\eqref{CH_VI_tk} together} with  piecewise-linear continuous finite elements for spatial discretization. We define
\begin{align*}
\Sho&=\{v_h\in C^0({\overline{\D}})\colon 
v_h|_{K}\in\mathcal{P}_1(K),\; \forall K\in\mathcal{T}_h\},\\
\Sh&=\{v_h\in C^0({\overline{\DC}})\colon 
v_h|_{K}\in\mathcal{P}_1(K)\quad \forall K\in\mathcal{T}_h\}.
\end{align*}
Let $\mathcal{J}_h^{\D}$ denote the set of nodes corresponding to the triangulation of $\D$ (including the boundary nodes), $\mathcal{J}_h^{I}$ the set of nodes in $\DD\setminus\overline{\D}$, and $\mathcal{J}_h^{\DC}$ the set of all nodes. Also, we set $p_j\in\mathcal{J}_h^*$, $*\in\{\D,I,\DC\}$ to be the coordinates of the corresponding nodes.
Then, we can represent $\Sho=\spann\{\phi_p, \;p\in\mathcal{J}_h^{\D}\}$ and $\Sh=\spann\{\phi_p, \;p\in\mathcal{J}_h^{\DC}\}$, where $\phi_i$ are the nodal Lagrange basis 
functions, $\phi_i(\x_j)=\delta_{i,j}$.

Similarly as in~\cite{blank2011} we employ a mass-lumping approach, where 
by $(\cdot,\cdot)_h$ and $(\cdot,\cdot)_{\widetilde{h}}$ we denote a mass-lumped $L^2(\D)$ and $L^2(\DC\setminus\D)$ inner products, respectively. To adapt the mass-lumping procedure for the nonlocal term, we employ the trapezoidal quadrature rule to assemble $b(\cdot,\cdot)$. In this way, the resulting discretized bilinear form preserves the mass-lumping property. Indeed, let $I^{\x}_h[\cdot]$ be the nodal interpolant {for the variable} $\x$ (respectively {for} $\xx$, $I^{\xx}_h$) and let $\cker^h(\x):=\int_{\DC}I_h^{\xx}[\kernel(\x,\xx)]\d\xx$. For any $\phi,\psi\in\Sh$ we introduce the discrete convolution
\begin{equation*}
(\kernel\circledast\phi)(\x):=\int_{\DC}I_h^{\xx}[\kernel(\x,\xx)\phi(\xx)]\d\xx=\sum_{k\in\mathcal{J}_h^{\DC}}\widetilde{m}_k\kernel(\x,\xx_k)\phi(\xx_k),\quad \widetilde{m}_k =\int_{\DC}\phi_k(\x)\d\x
\end{equation*}
and {obtain the mass-lumped} bilinear form
\begin{multline*}
b_h(\phi,\psi):=\ \frac{1}{2}\int_{\DC}\int_{\DC}I_h^{\x}I_{h}^{\xx}\left[(\phi(\x)-\phi(\xx))(\psi(\x)-\psi(\xx))\right]\d\x\d\xx\\
=\ \int_{\D}I_h^{\x}\left[\cker^h\phi(\x)\psi(\x)\right]\d\x-\int_{\D}I_h^{\x}\left[\psi(\x)(\kernel\circledast\phi)(\x)\right]\d\x+\int_{\DC\setminus\D}I_h^{\x}\left[\psi(\x)\mathcal{N}_h\phi(\x)\right]\d\x\\
=(\cker^h\phi,\psi)_h-(\kernel\circledast\phi,\psi)_h+(\mathcal{N}_h\phi,\psi)_{\widetilde{h}},
\end{multline*}
where
\[
\mathcal{N}_h\phi(\x):=\int_{\DC}I_h^{\xx}\left[(\phi(\x)-\phi(\xx))\kernel(\x,\xx)\right]\d\xx=\cker^h\phi(\x)-(\kernel\circledast\phi)(\x).
\]
From the above expression we see that mass-lumping for the nonlocal term holds on the discrete level. 
Now, letting $\psi_j\in\Sh$ to be the Lagrangian basis functions, we obtain that 
$b_h(\phi,\psi_j)=(\cker^h\phi,\psi_j)_h-(\kernel\circledast\phi,\psi_j)_h$ for all $j\in\mathcal{J}_h^{\D}$ and $b_h(\phi,\psi_j)=(\mathcal{N}_h\phi,\psi_j)_{\widetilde{h}}$ for all $j\in\mathcal{J}_h^I$.

Then, the fully discrete problem reads as follows: For a given $u_h^0\in\Sh$ we seek $(w_h^k,u_h^k,\lambda_{h}^k)\in\Sho\times\Sh\times\Sho$, $k=1,\dots,K$, such that the following holds
\begin{equation}
\begin{aligned}
\frac{1}{\tt}(u_h^k-u_h^{k-1},\phi)_h+(\nabla w_h^k,\nabla\phi)_{L^2(\D)}&=0,\;\;\forall \phi\in\Sho,\\
(w_h^k,\psi)_h- {b_h(u_h^k,\psi)}+\cpot(u_h^k,\psi)_h-(\lambda_h^k,\psi)_h&=0,\;\;\forall\psi\in{\Sh},\\
\lambda_h^k=\lambda^k_{h,+}-\lambda^k_{h,-},\quad \lambda^k_{h,+}\geq 0,\quad\lambda^k_{h,-}\geq 0,\quad |u_h^k|&\leq 1,\\
\lambda_{h,\pm}^k(p_j)(u_h^k(p_j)\mp1)&=0,\;\forall p_j\in\mathcal{J}_h^{\D}.
\end{aligned}\label{CH_VI_disc}
\end{equation}
Thanks to the mass-lumping property of $b_h(\cdot,\cdot)$, we can also obtain the projection formula~\eqref{proj_ut}, for the fully discrete solution $u_h^k\in\Sh$ {of~\eqref{CH_VI_disc}}:
\[
u_h^k(p_j)=P_{[-1,1]}\left(\frac{1}{\xi_h(p_j)}g_h^k(p_j)\right),\quad\quad p_j\in\mathcal{J}_h^\D,\;\; k=1,\dots,K,
\]
where $\xi_h(p_j)=\cker^h(p_j)-\cpot>0$ and $g_h^k:=w_h^k+\kernel_h\circledast u^k_h$. The availability of the above formula is useful, as it provides an insight into the stability properties of the discrete solution. 

We note that the discrete system~\eqref{CH_VI_disc} is computationally demanding due to the inclusion of the nonlocal term $b_h(\cdot,\cdot)$ on the left-hand side of the equation. This leads to inverting a large and dense matrix at each time step in~\eqref{CH_VI_disc}. 
To overcome it, one can consider instead an implicit-explicit time stepping scheme, where we discretize the local part of $b_h(\cdot,\cdot)$ implicitly, and the remaining nonlocal part explicitly, i.e., {we replace $b_h(u_h^k,\psi)$ in~\eqref{CH_VI_disc} with the approximation}
\begin{equation}
b_h(u_h^k,\psi)\approx(\cker^hu_h^k,\psi)_h-(\kernel^h \circledast u_h^{k-1},\psi)_h.\label{eq:explicit_bh}
\end{equation}
The discrete systems~\eqref{CH_VI_disc} can be solved by the primal-dual-active set strategy (PDAS), which has been already successfully employed for {the} local Cahn-Hilliard variational inequality problem; see, e.g.,~\cite{blank2011,hintermuller2011CH}. 
Under suitable conditions, PDAS is equivalent to the semismooth Newton method, which converges locally superlinearly; for more details, see~\cite{kunisch2002}.

{
We note that, while the present work focuses on solutions that may admit jump discontinuitites, an adoption of the continuous finite element method with the mass-lumping approach allows the avoidance of instabilities associated with the approximation of sharp interfaces. Alternatively, one could also consider a discontinuous Galerkin finite element method, which has been recently explored in nonlocal settings~\cite{ChenGunzburger2011,Tian2015DG,
Du2019DG,DuYin2019DG,Du2020DG,RenAskari2017}.}

{Next} we present several numerical examples which illustrate the behaviour of the nonlocal solution of~\eqref{CH_VI_disc} and a comparative study of the nonlocal vs. local solutions of the Cahn-Hilliard variational inequality. By ``local" we mean the solution of~\eqref{eq:main} with the constant mobility and {with the same obstacle potential $F$ as defined in}~\eqref{potential_obstacle}. 

\subsection{Numerical examples}
The domain $\D$ is set to be $\Omega = (0,1)^d$, $d\in\{1,2\}$ and {$\DC$
is discretized with a quasi-uniform mesh with a number of nodes $N$ that will be  specified in each instance.} For the choice of the kernel $\kernel$, we consider a scaled Gaussian kernel, similar to the one defined in~\cite{du2018CH}:
\begin{equation}
\kerneld=\frac{4\varepsilon^2}{\pi^{n/2}(\delta/3)^{n+2}}e^{\frac{-|\x-\xx|^2}{(\delta/3)^2}},\quad\x\in\R^n,\quad\delta>0.\label{kernel_gaussian}
\end{equation}
{For} $\delta\to 0$ the nonlocal operator $B$ recovers the Laplace operator $-\varepsilon^2\upDelta$, see, e.g.,~\cite{du2018CH,tian2020review}. {We additionally truncate the kernel at $|\x-\xx|>\delta$ to ensure that we have a finite extent of nonlocal interactions defined by $\delta$. Due to the exponential decay in the kernel~\eqref{kernel_gaussian}, this truncation introduces a negligible error in the corresponding solution. }
For {\it Case~1} the constant $\cker$ in~\eqref{cker} {is approximately equal} $\cker\approx36\varepsilon^2/\delta^2$. The parameters $\varepsilon,\cpot$ and $\delta$ are specified for each case separately. 

\subsection*{Example 1a}
Let $\D=(0,1)$ and we consider the ``Neumann'' type nonlocal operator $B$, defined as in \textit{Case~1}. We set $T=2$, $\tt=2\cdot 10^{-4}$, $\delta=0.25$, and $\xi=\cker-\cpot=0.008$, that corresponds to $\varepsilon^2= 0.00175$ and $\cpot=1$. The initial condition is chosen as $u_0=0.1(\sin(2\pi\x)+\sin(3\pi\x))$.
In Figure~\ref{fig:1a} we depict the nonlocal and local solutions of the Cahn-Hilliard variational inequality at different time instances.
\begin{figure}[ht!]
\begin{subfigure}{.45\textwidth}
  \centering
  \includegraphics[width=\textwidth]{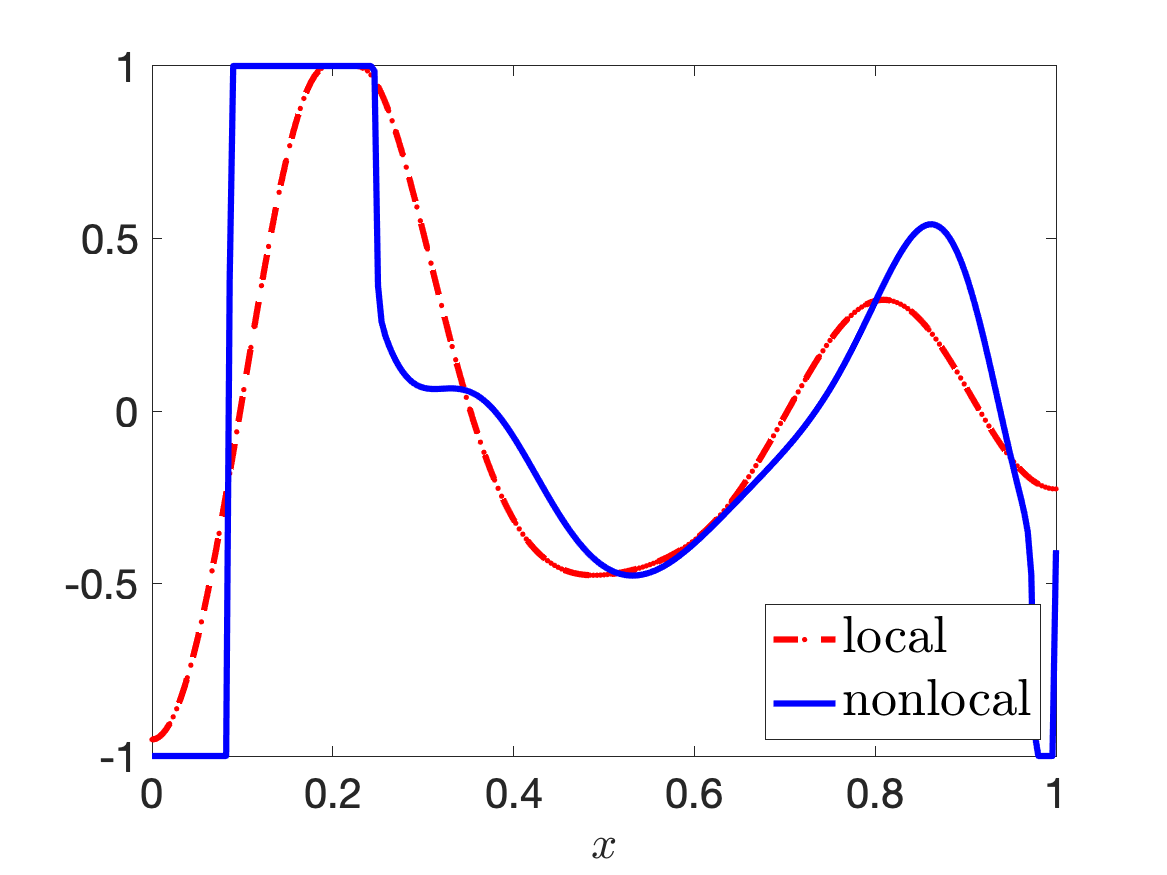}
  \caption*{$t=0.02$}
 \end{subfigure}
 \begin{subfigure}{.45\textwidth}
  \centering
  \includegraphics[width=\textwidth]{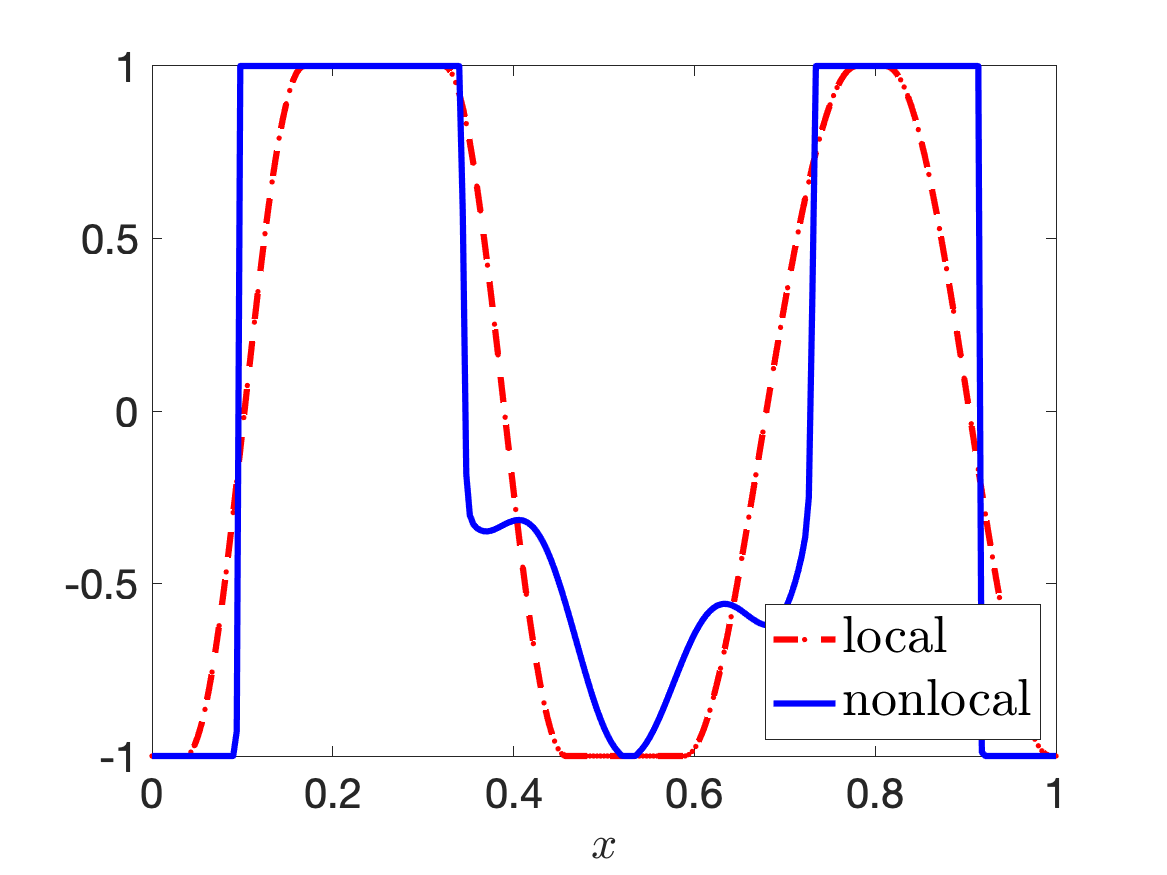}
  \caption*{$t=0.03$}
 \end{subfigure}\\
 \begin{subfigure}{.45\textwidth}
  \centering
  \includegraphics[width=\textwidth]{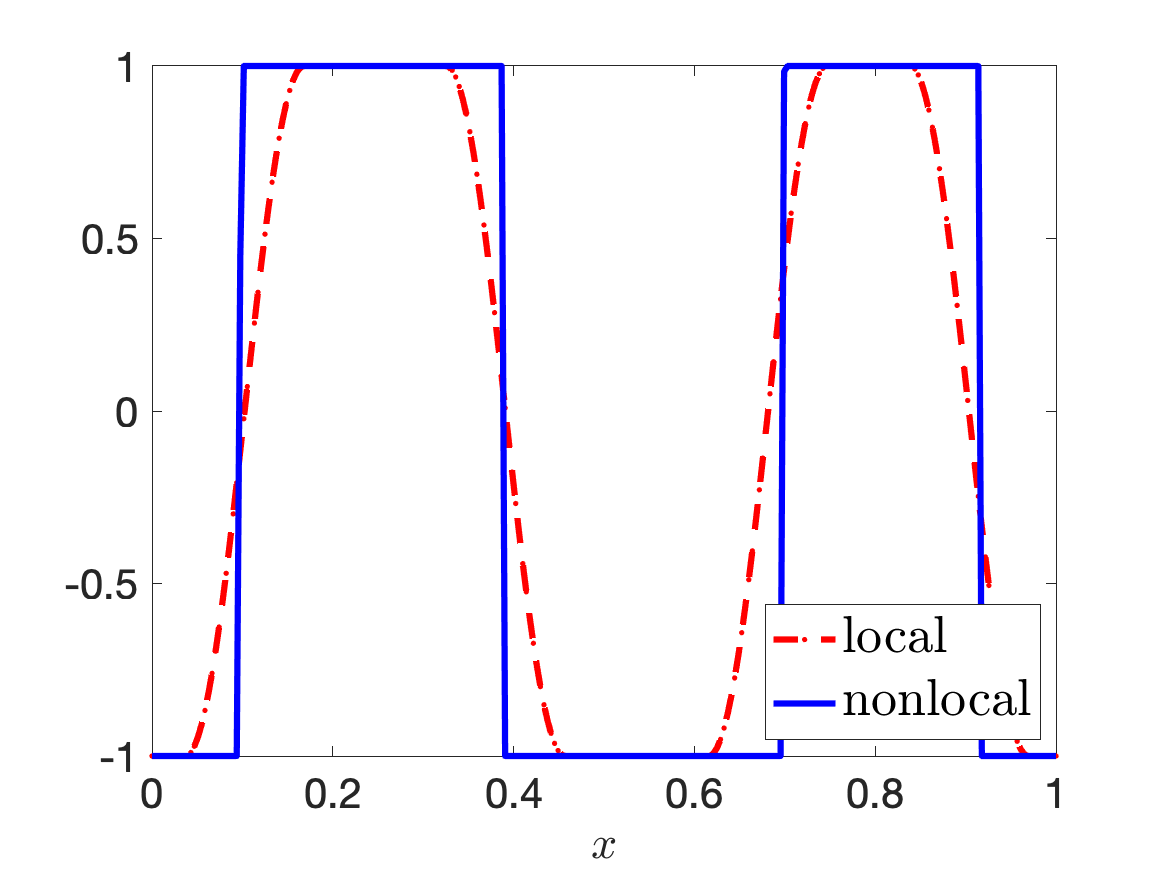}
  \caption*{$t=0.06$}
 \end{subfigure}
 \begin{subfigure}{.45\textwidth}
  \centering
  \includegraphics[width=\textwidth]{ex1/1dplot_ex_neumann_tk10000}
  \caption*{$t=2$}
 \end{subfigure}
 \caption{Evolution of the nonlocal ({\it Case 1}) and local solutions of the Cahn-Hilliard variational inequality at different time instances {for Example~1a}.}
 \label{fig:1a}
\end{figure}
We can clearly observe that the nonlocal solution has jump-discontinuities ({up to the resolution of} the employed computational grid) and admits mostly pure states, while the local solution also takes values in $(-1,1)$ even for large times. These observations are in agreement with our theoretical results, where we know that for $\xi=0$ the solution can admit pure phases throughout the domain. 

\subsection*{Example 1b}
Next, we are going to investigate the behaviour of the solution corresponding  to the ``regional'' type nonlocal operator, defined in \textit{Case~2}. We keep the same settings as in the previous example, apart from $\xi$, which is no longer constant in the present case. We set $\xi_{\min}=\min\{\xi(\x)\}=0.008$, {$\x\in\D$, which} corresponds to $\varepsilon^2=0.00175$ and $\cpot=0.4960$.
In Figure~\ref{fig:1b} we depict the evolution of the solution at different time instances. In contrast to the previous example, we are no longer observing sharp interfaces of the solution. This is explained by the fact that $\xi=\xi(x)$ is spatially dependent and does not vanish for all $\xi\in\D$. Furthermore, we observe {that the phase transitions that occur near the boundary of $\Omega$ have steeper gradients compared to the phase transitions appearing in the interior of the domain. This} is explained by the fact that $\xi(\x)$ is the smallest and equal to $\xi_{\min}$ in the boundary nodes {and according to the theory this leads to sharper interfaces in the solution}. 
\begin{figure}[ht!]
\begin{subfigure}{.45\textwidth}
  \centering
  \includegraphics[width=\textwidth]{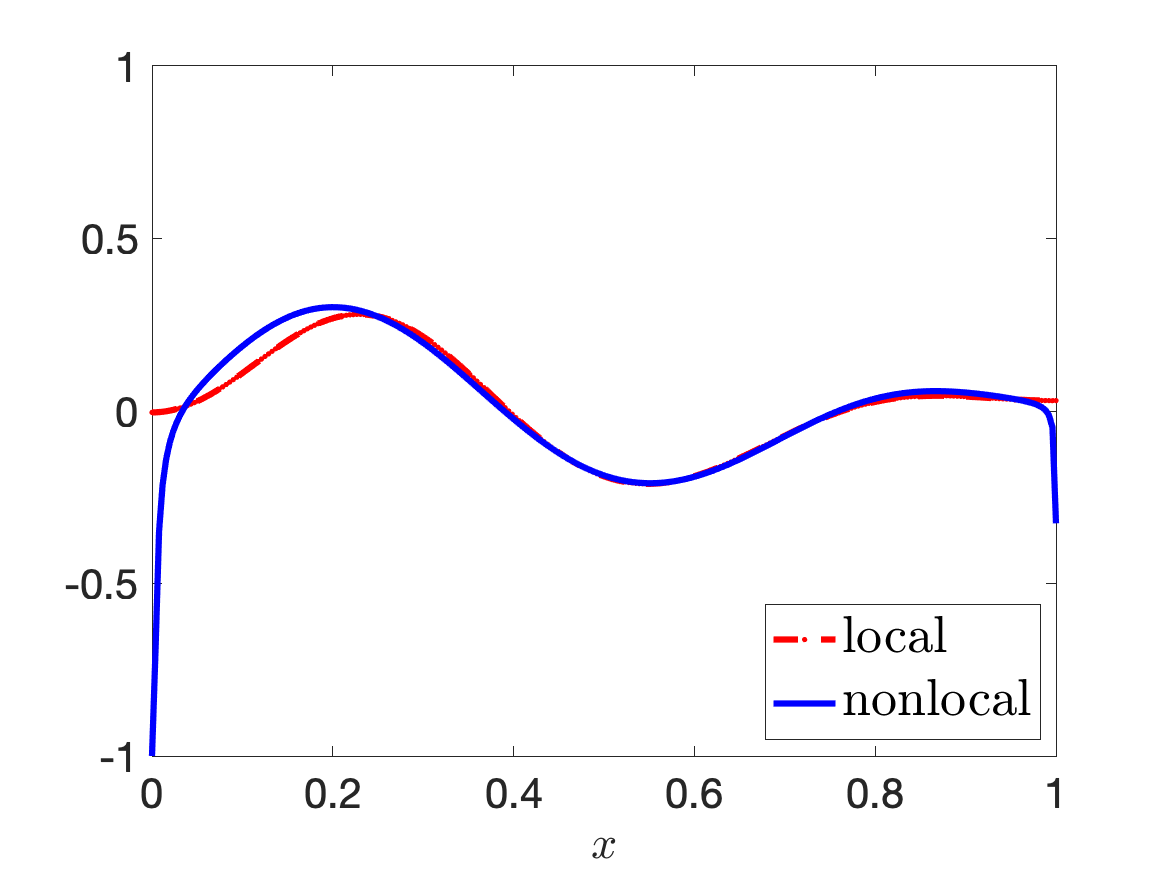}
  \caption*{$t=0.02$}
 \end{subfigure}
 \begin{subfigure}{.45\textwidth}
  \centering
  \includegraphics[width=\textwidth]{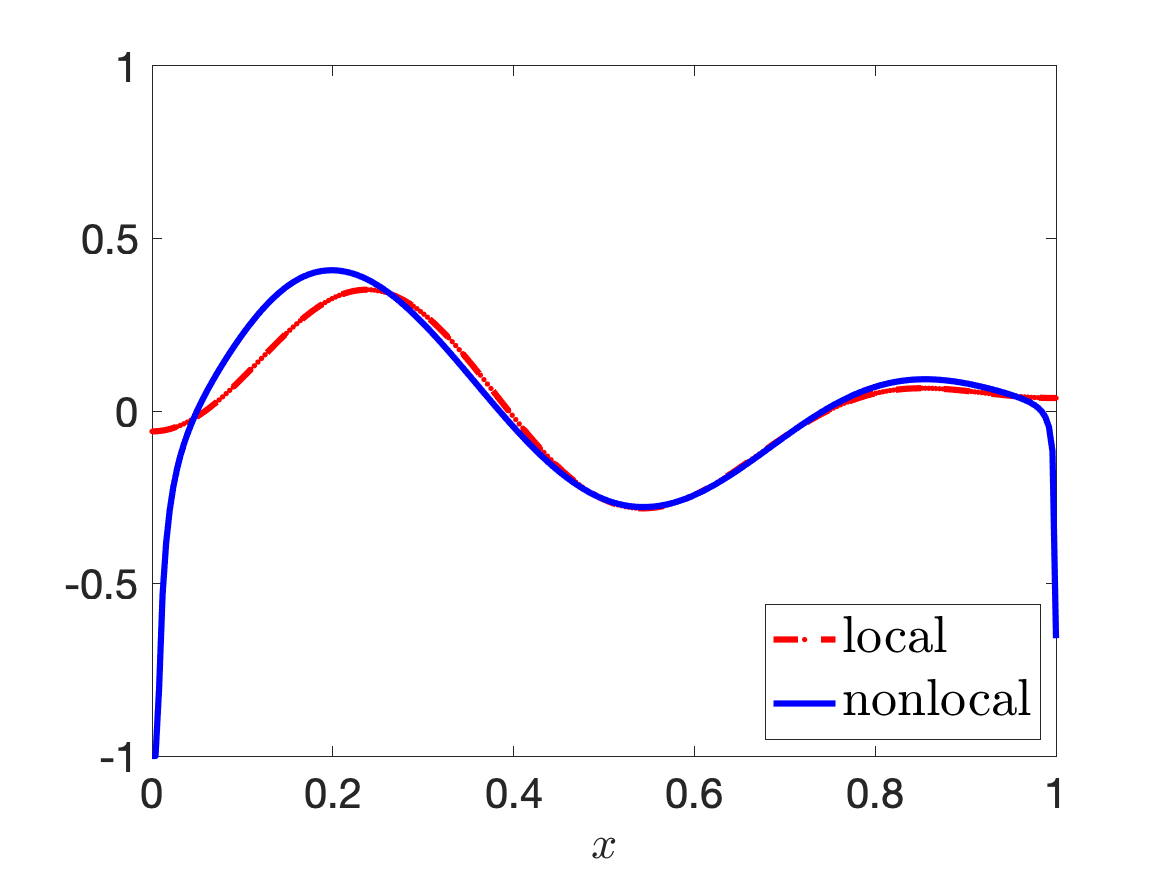}
  \caption*{$t=0.03$}
 \end{subfigure}\\
 \begin{subfigure}{.45\textwidth}
  \centering
  \includegraphics[width=\textwidth]{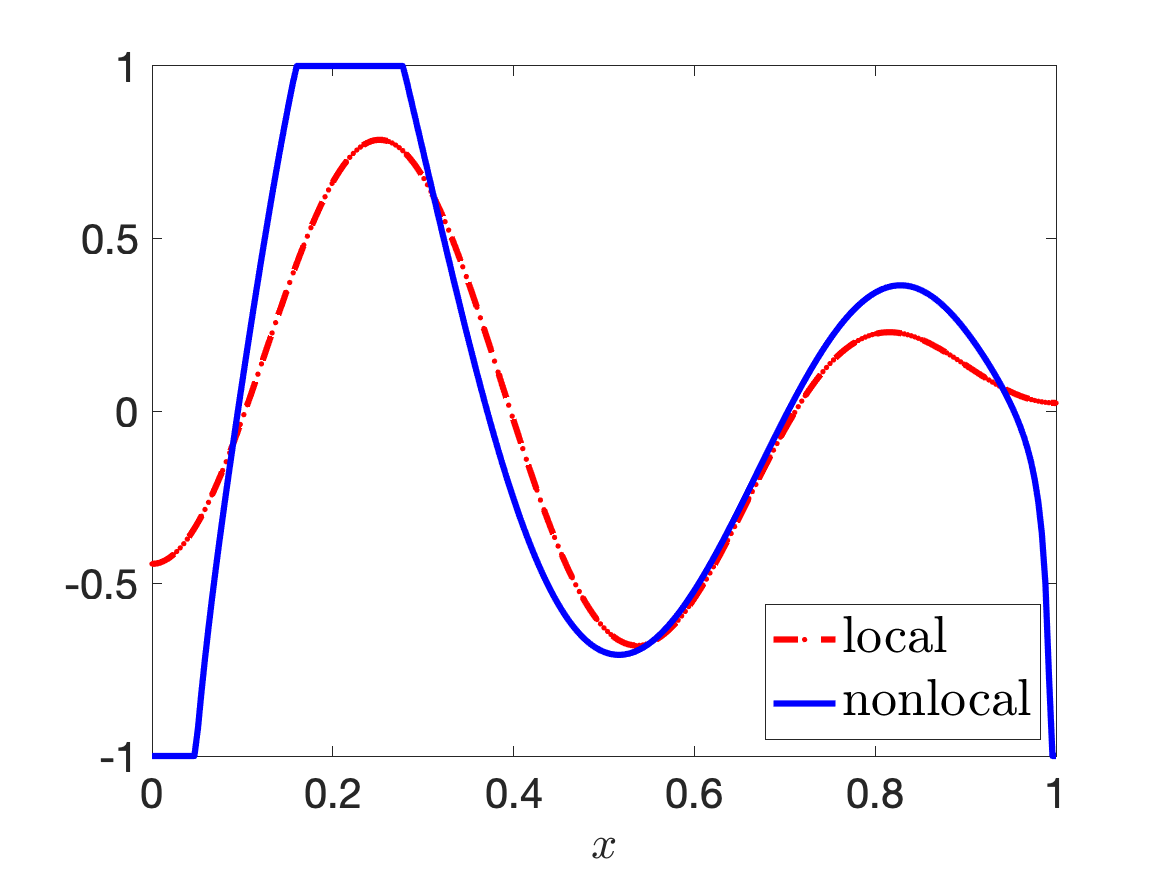}
  \caption*{$t=0.06$}
 \end{subfigure}
 \begin{subfigure}{.45\textwidth}
  \centering
  \includegraphics[width=\textwidth]{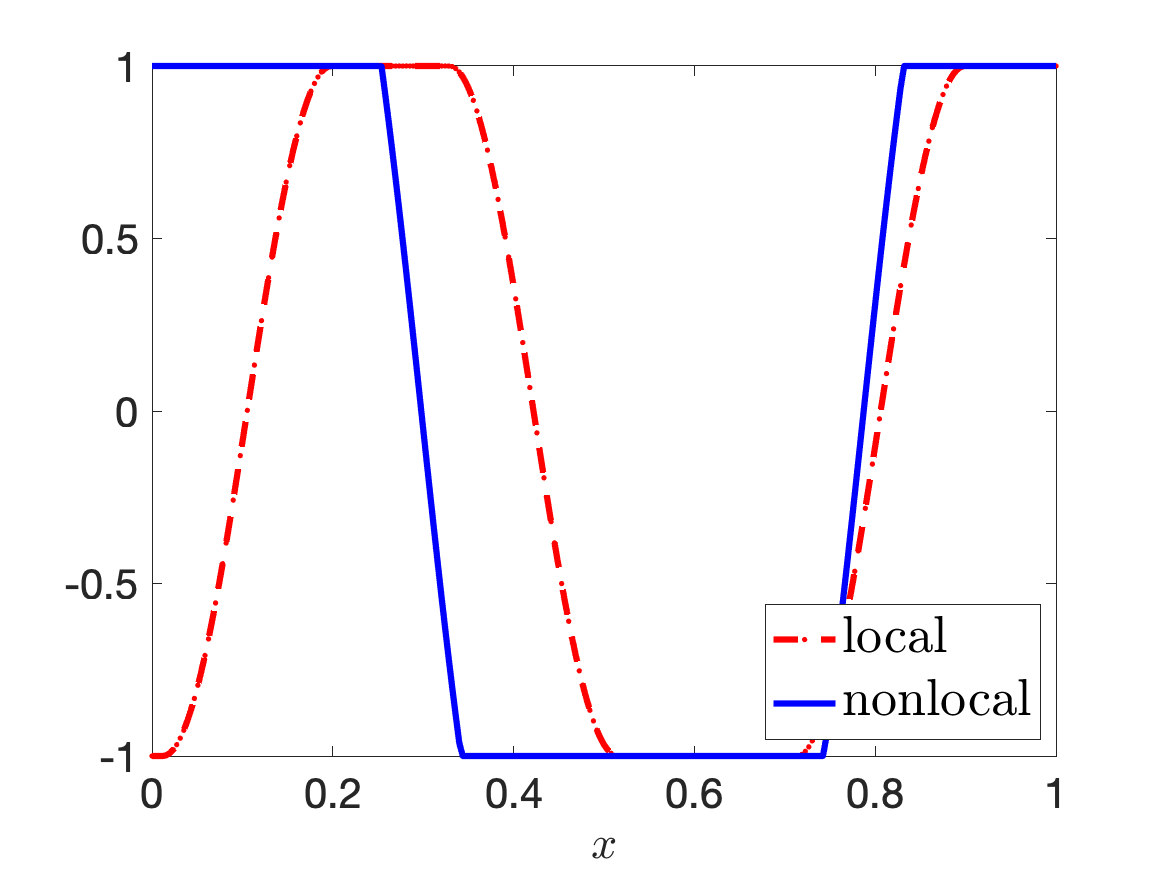}
  \caption*{$t=2$}
 \end{subfigure}
 \caption{Evolution of the nonlocal ({\it Case~2}) and local solutions of the Cahn-Hilliard variational inequality at different time instances {for Example~1b}.}
 \label{fig:1b}
\end{figure}
{
\subsection*{Example 1c}
Now, we again consider the ``regional'' type nonlocal operator, however set $\cpot=\cpot(\x)$ to be a spatially dependent coefficient such that $\xi(\x)=\cker(\x)-\cpot(\x)$ is constant throughout $\D$. In particular, taking $\varepsilon^2=0.00175$ and using the same settings as in Example~1a, we consider $\cpot(\x) = \cker(\x)-0.008$. This leads to $\xi=0.008$ being constant in $\D$. While such modification allows us to get constant values of $\xi$ uniformly in $\D$, this changes a potential, i.e., $F_0(u)=(c_f(x)/2)(1-u^2(\x))$, which, in turn, also leads to a modification of the underlying problem. 
In Figure~\ref{fig:1c} we depict the snapshots of the solution at different time instances. Here, in contrast to the previous example, we are able to get sharp interfaces in the solution similar as in the case of the ``Neumann'' type nonlocal operator. However, in contrast to the ``Neumann'' type problem, this comes with the need to use a non-typical double-well potential.
\begin{figure}[ht!]
\begin{subfigure}{.45\textwidth}
  \centering
  \includegraphics[width=\textwidth]{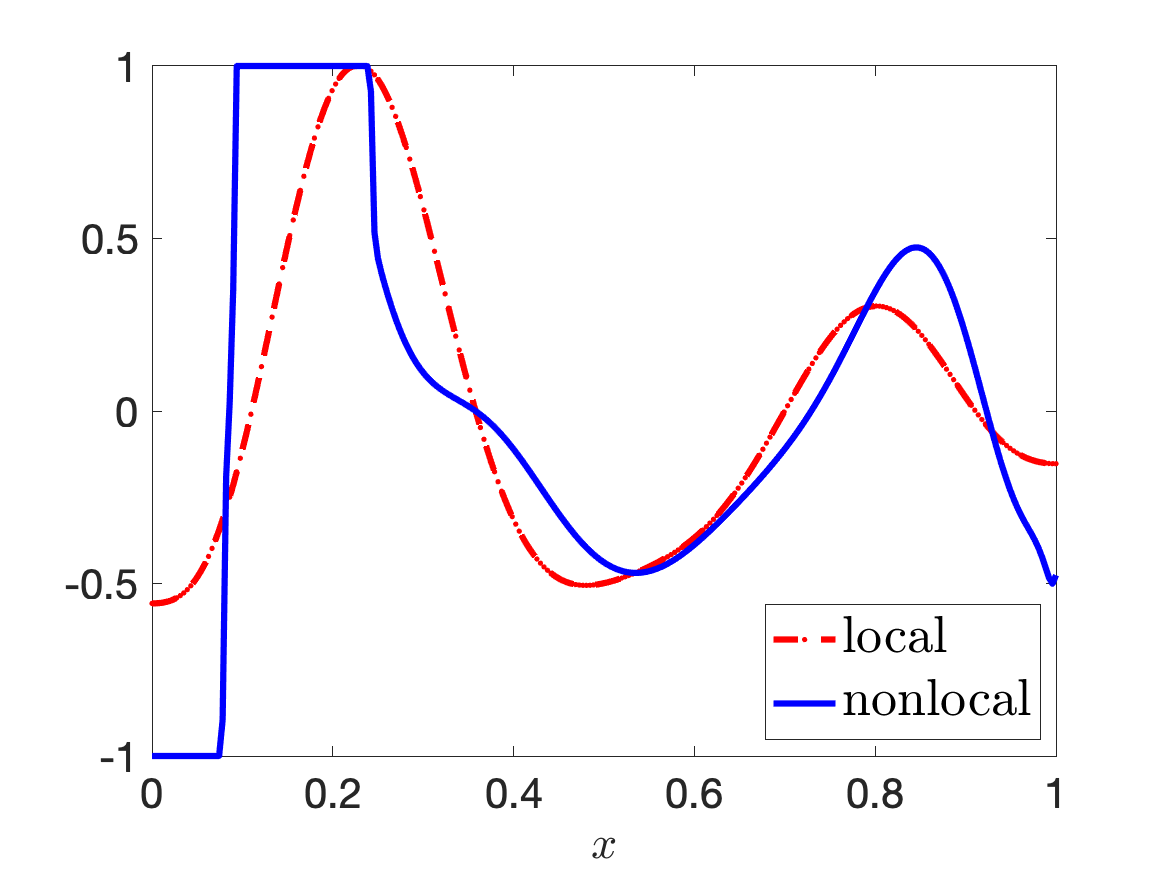}
  \caption*{$t=0.02$}
 \end{subfigure}
 \begin{subfigure}{.45\textwidth}
  \centering
  \includegraphics[width=\textwidth]{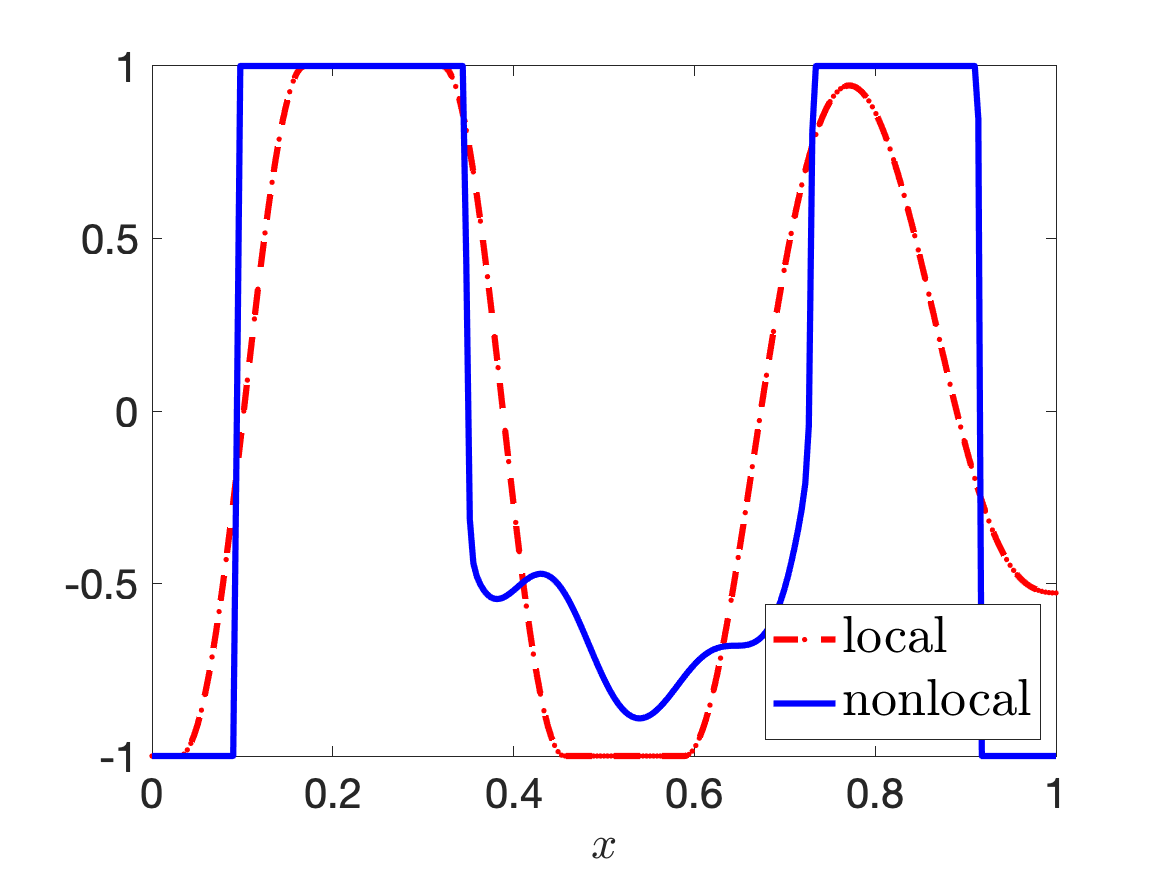}
  \caption*{$t=0.03$}
 \end{subfigure}\\
 \begin{subfigure}{.45\textwidth}
  \centering
  \includegraphics[width=\textwidth]{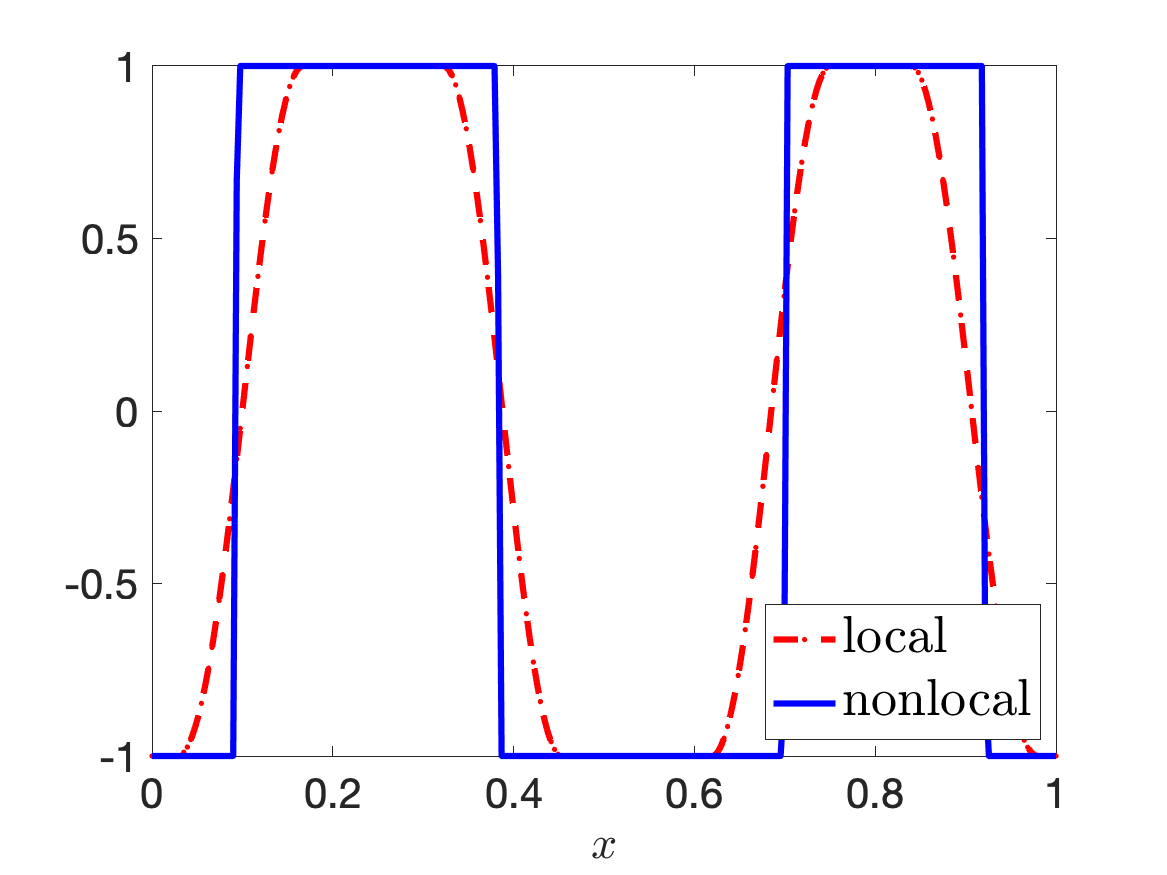}
  \caption*{$t=0.06$}
 \end{subfigure}
 \begin{subfigure}{.45\textwidth}
  \centering
  \includegraphics[width=\textwidth]{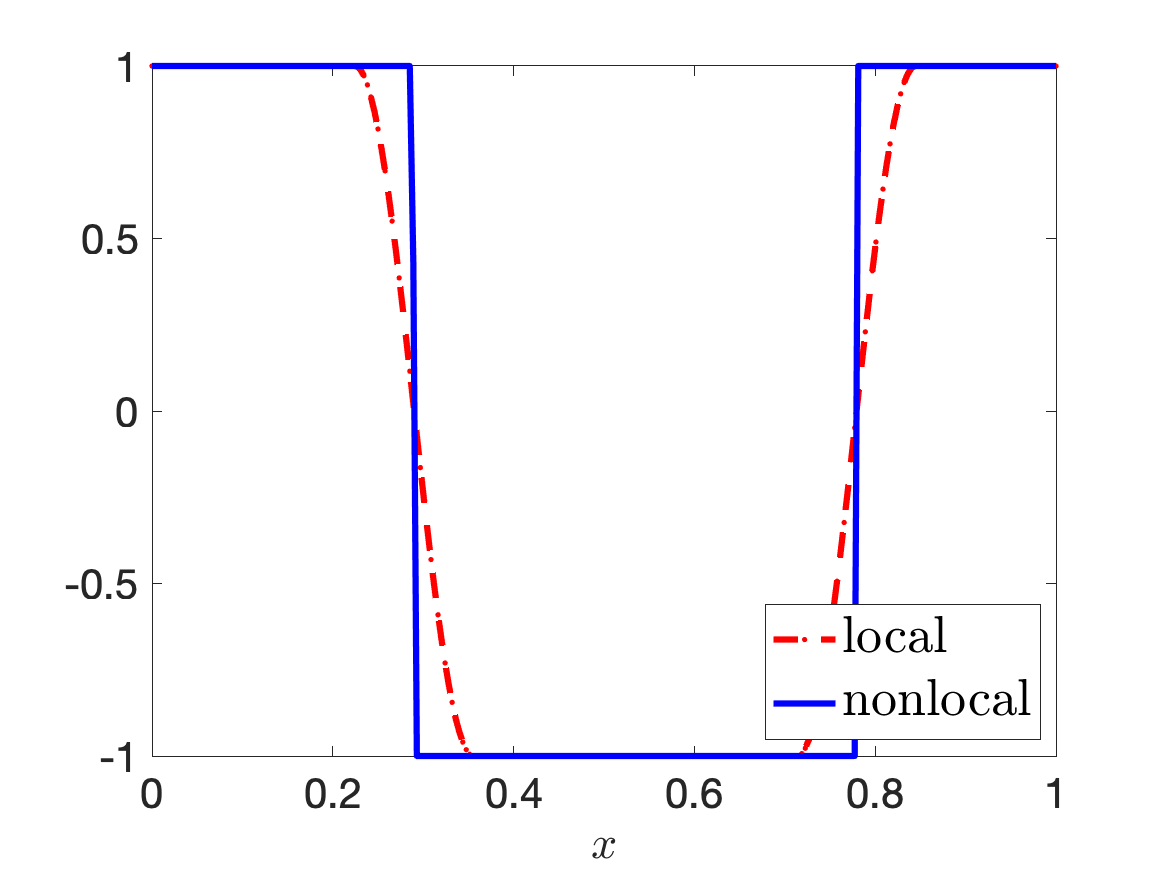}
  \caption*{$t=2$}
 \end{subfigure}
 \caption{Evolution of the nonlocal ({\it Case~2}) and local solutions of the Cahn-Hilliard variational inequality at different time instances and with $\cpot=\cpot(x)$ for Example~1c.}
 \label{fig:1c}
\end{figure}
}

{
\subsection*{Example 1d}
Now, using the``Neumann'' type nonlocal operator we investigate the variation of the solution with respect to the nonlocal interface parameter $\xi$. We keep the same settings as in Example~1a, and to obtain different values of $\xi$ we vary either the coefficient $\cpot$ or a nonlocal interaction radius $\delta$. The corresponding snapshots are depicted in Figure~\ref{fig:1d}. Here, we can observe that the width of the interface is changing with respect to $\xi$. In particular,  for $\xi=0$ that corresponds either to $\cpot=1.56$ or $\delta=0.25$, we can see that the solution admits sharp interfaces, which is also in agreement with the previous examples, and for a larger $\xi$ the interface becomes more diffuse and the nonlocal solution conforms {more closely} to the corresponding local solution. We also observe that for a smaller extent of nonlocal interactions $\delta$ the nonlocal solutions are close to the local one and the interface becomes more diffuse, which is an expected behavior here. Similar results have been also reported in the Cahn-Hilliard case with the regular potential~\cite{du2018CH}.}

{
Overall, we could see that the parameter $\xi$ plays a role of an interface parameter in the nonlocal model, similar to the interface parameter $\varepsilon$ in the local setting. Changing the support of the kernel and the scaling of the double-well potential have a great affect on $\xi$ and, hence, on the width of the interface.
\begin{figure}[ht!]
\center{
\begin{subfigure}{.45\textwidth}
  \centering
  \includegraphics[width=1.2\textwidth]{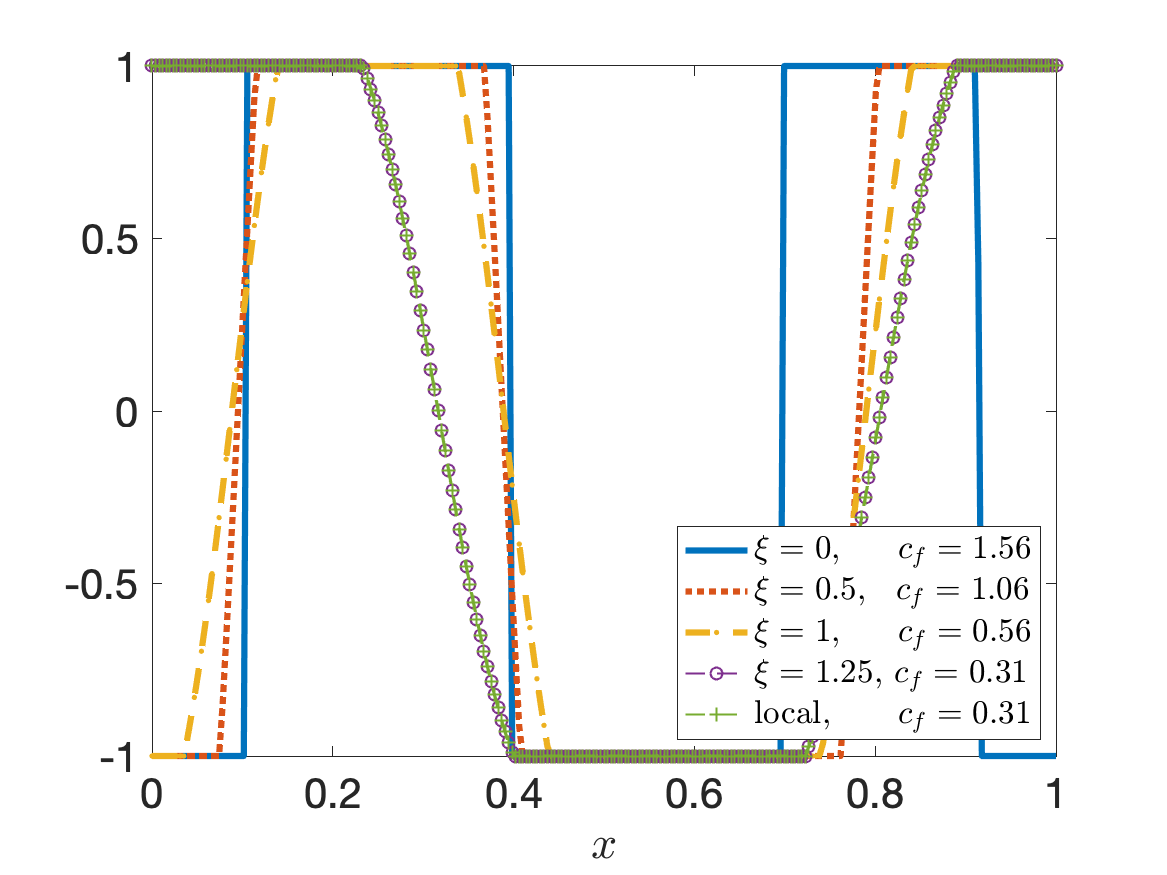}
 \end{subfigure}\hspace{2em}
 \begin{subfigure}{.45\textwidth}
  \centering
  \includegraphics[width=1.2\textwidth]{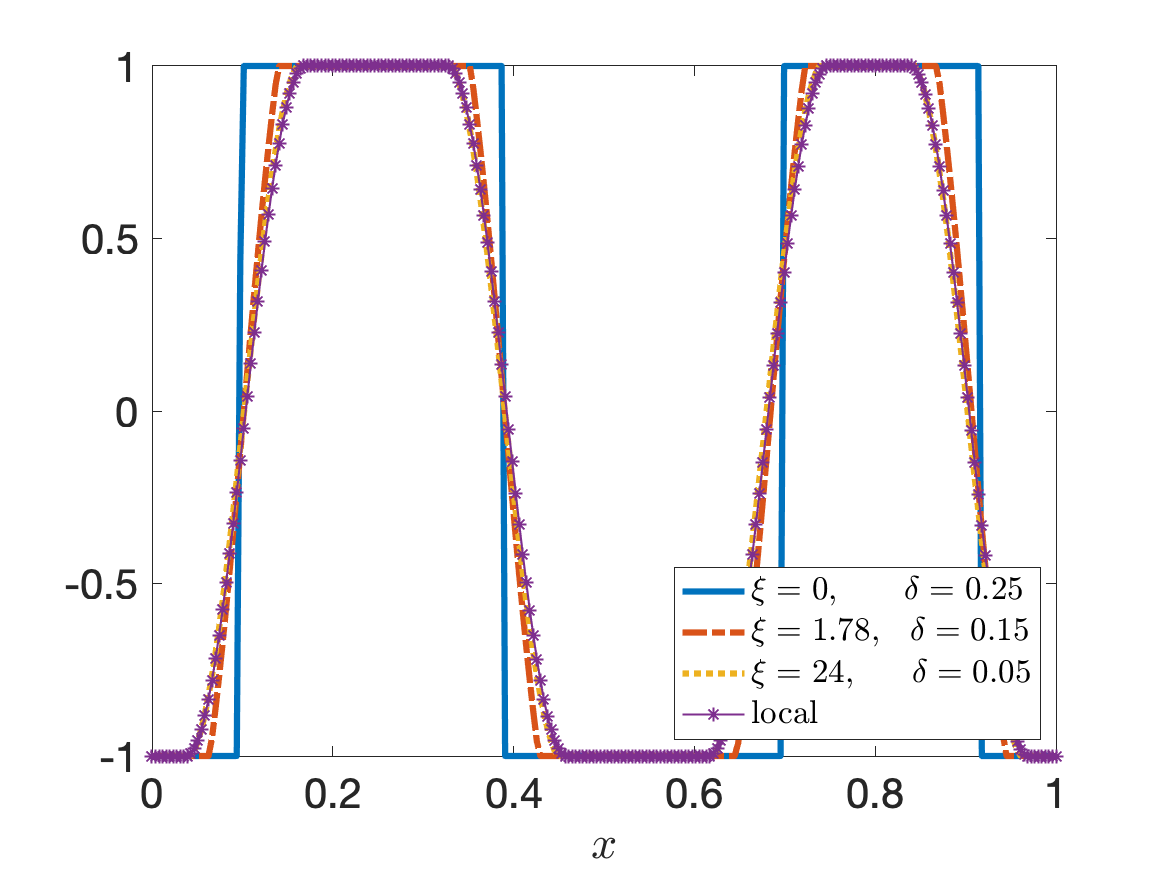}
 \end{subfigure}}
 \caption{Snapshots of the nonlocal ({\it Case~2}) and local solutions of the Cahn-Hilliard variational inequality for different values of $\cpot$ (left) and $\delta$ (right).}
 \label{fig:1d}
\end{figure}
}

Next, we conduct a comparative study for two-dimensional examples. 
\subsection*{Example 2a}
{Now, let $\D=(0,1)^2$ and consider the ``Neumann'' type nonlocal operator $B$, defined as in \textit{Case~1}, where we set $T=1$, $K=2000$, $N=39009$, 
$\delta=0.25$, $\xi=0$, $\cpot=1$, $\varepsilon^2=0.0017$. The initial condition $u_0$ is chosen as 
\[
u_0(\x)=2\left(e^{-(6\x-2.1)^2-(6\xx-3)^2}+e^{-(7\x-4.9)^2-(7\xx-3.5)^2}\right)-1.
\]
In Figure~\ref{fig:2a} we plot the snapshots of the local and nonlocal solutions at different time steps. We can observe that local and nonlocal solutions look alike quantitatively. {However, whereas the nonlocal model can describe perfectly sharp interfaces up to the resolution of the discretization mesh, the interfaces for local solution are diffuse.}
\begin{figure}[ht!]
\begin{subfigure}{.22\textwidth}
  \centering
  \includegraphics[width=1.2\textwidth]{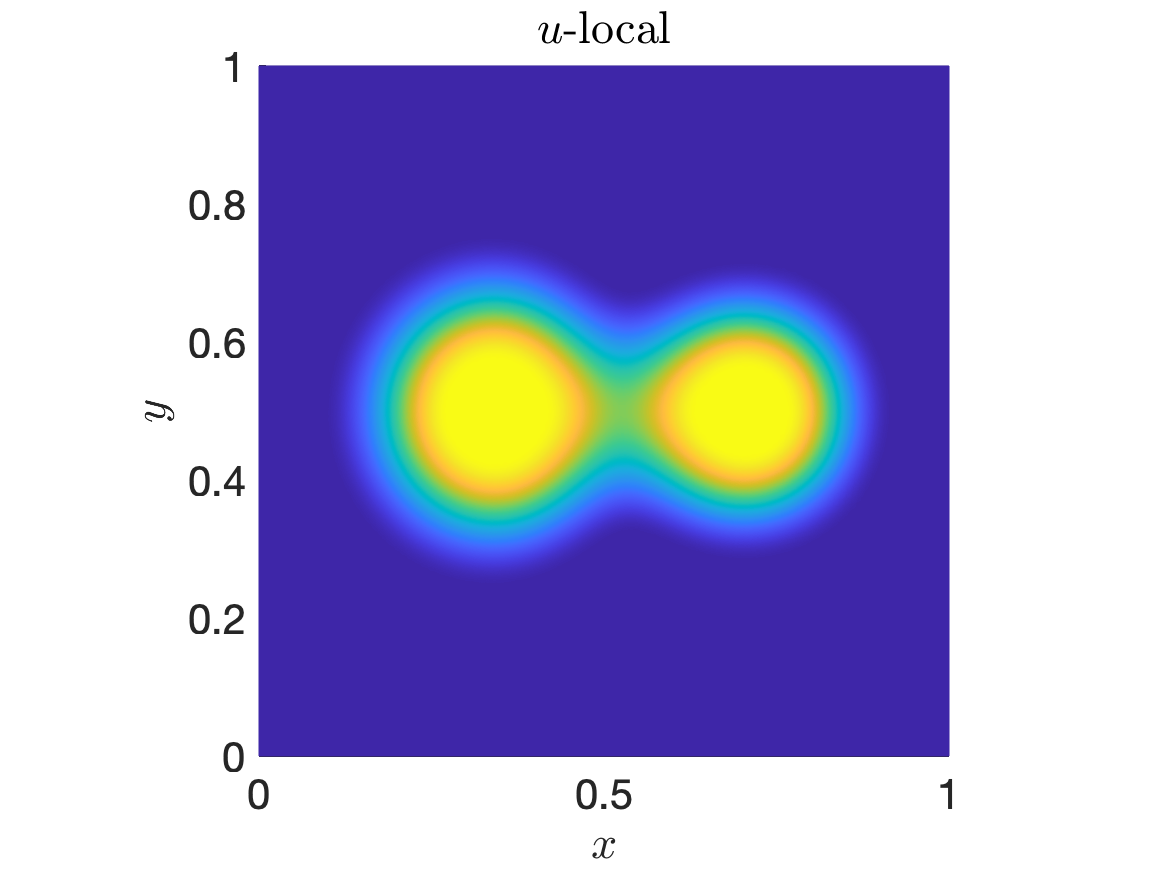}
 \end{subfigure}
 \begin{subfigure}{.22\textwidth}
  \centering
\includegraphics[width=1.2\textwidth]{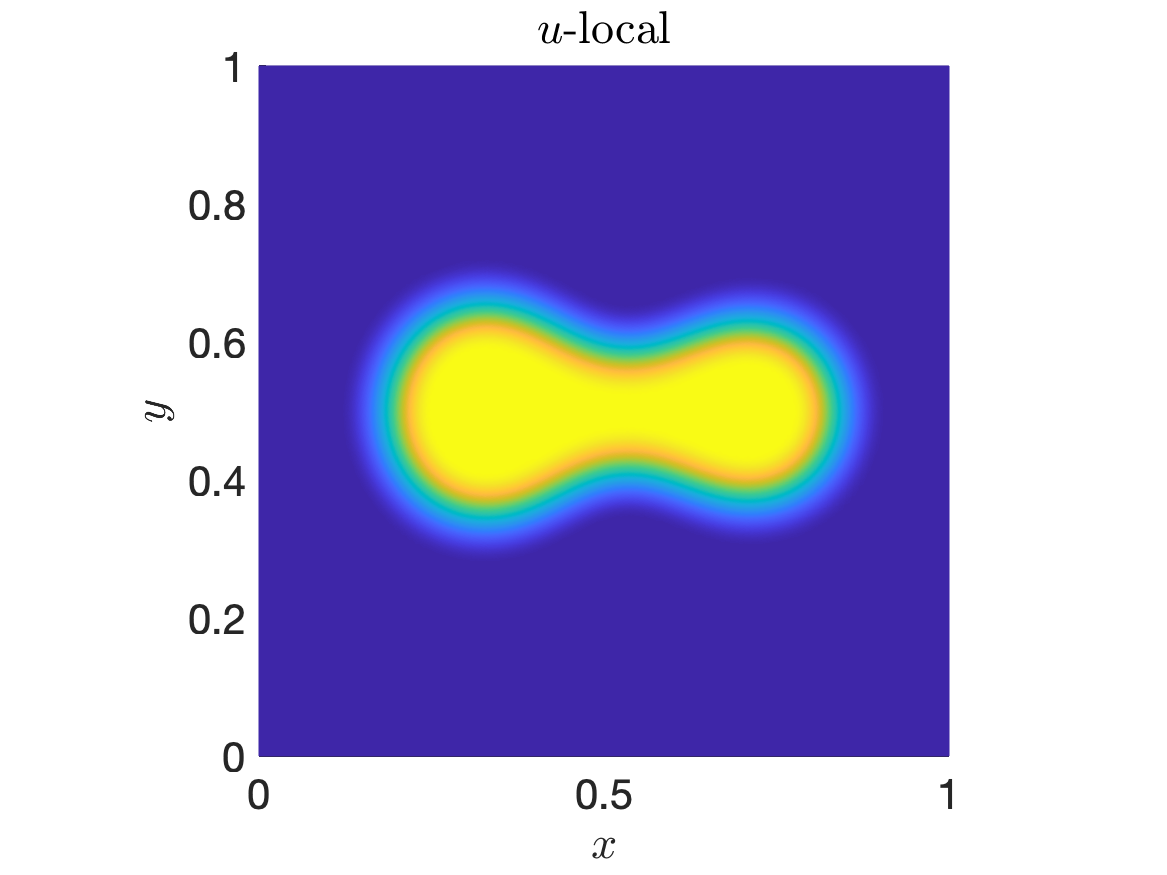}
 \end{subfigure}
 \begin{subfigure}{.22\textwidth}
  \centering
\includegraphics[width=1.2\textwidth]{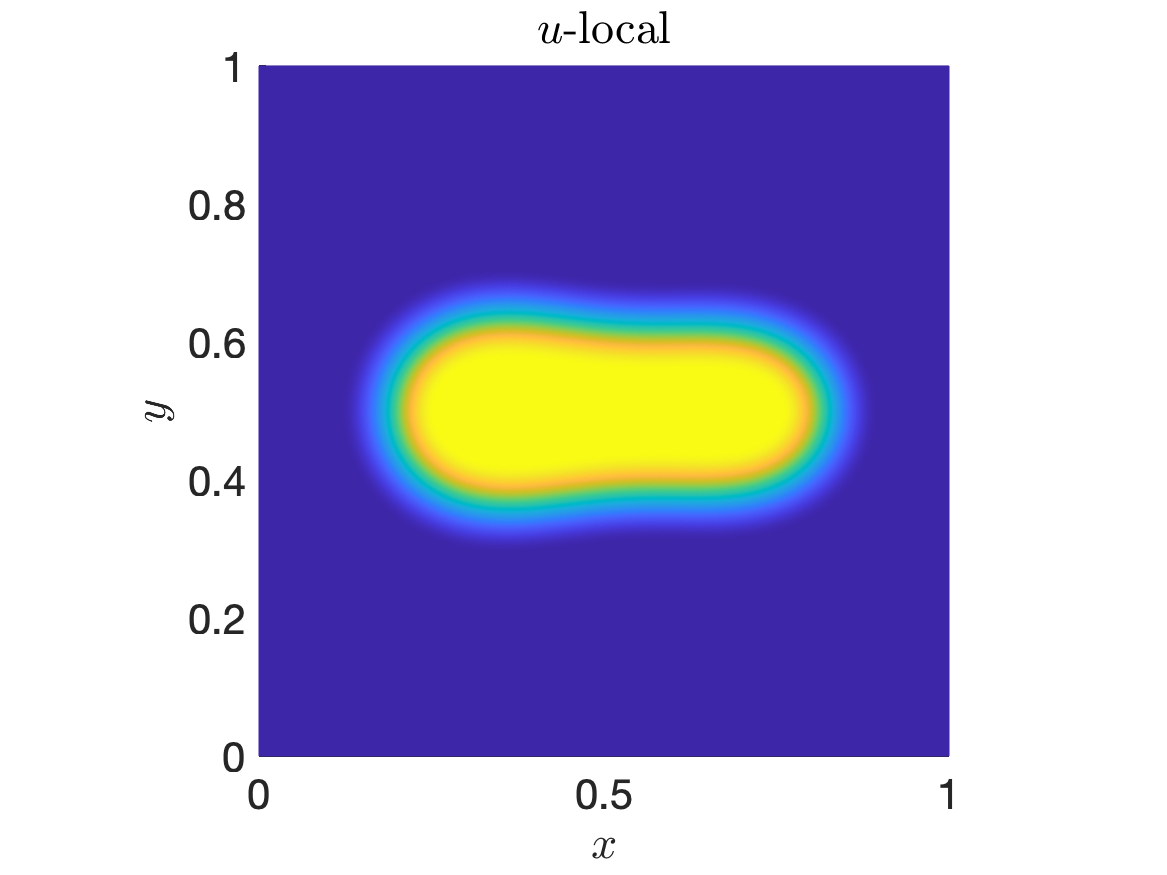}
 \end{subfigure}
 \begin{subfigure}{.22\textwidth}
  \centering
  \includegraphics[width=1.2\textwidth]{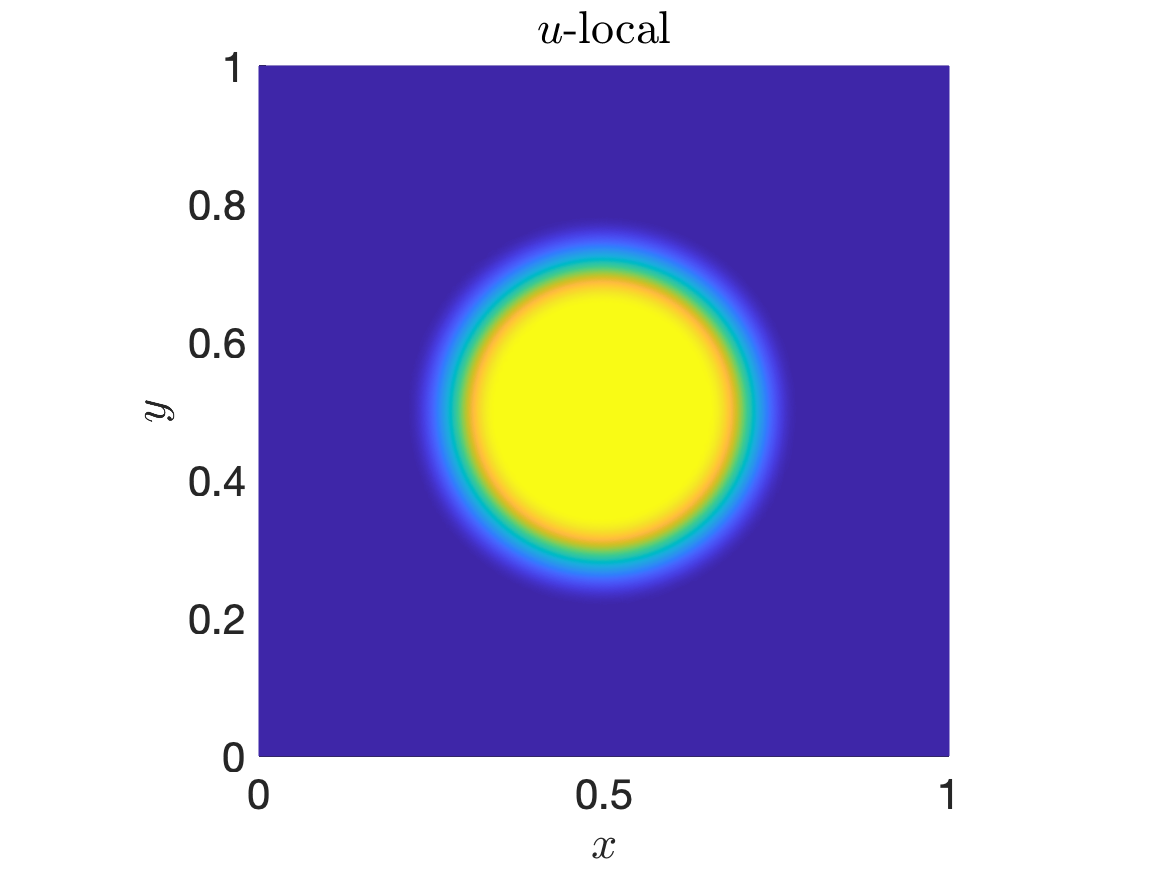}
 \end{subfigure}\\
 \begin{subfigure}{.22\textwidth}
  \centering
 \includegraphics[width=1.2\textwidth]{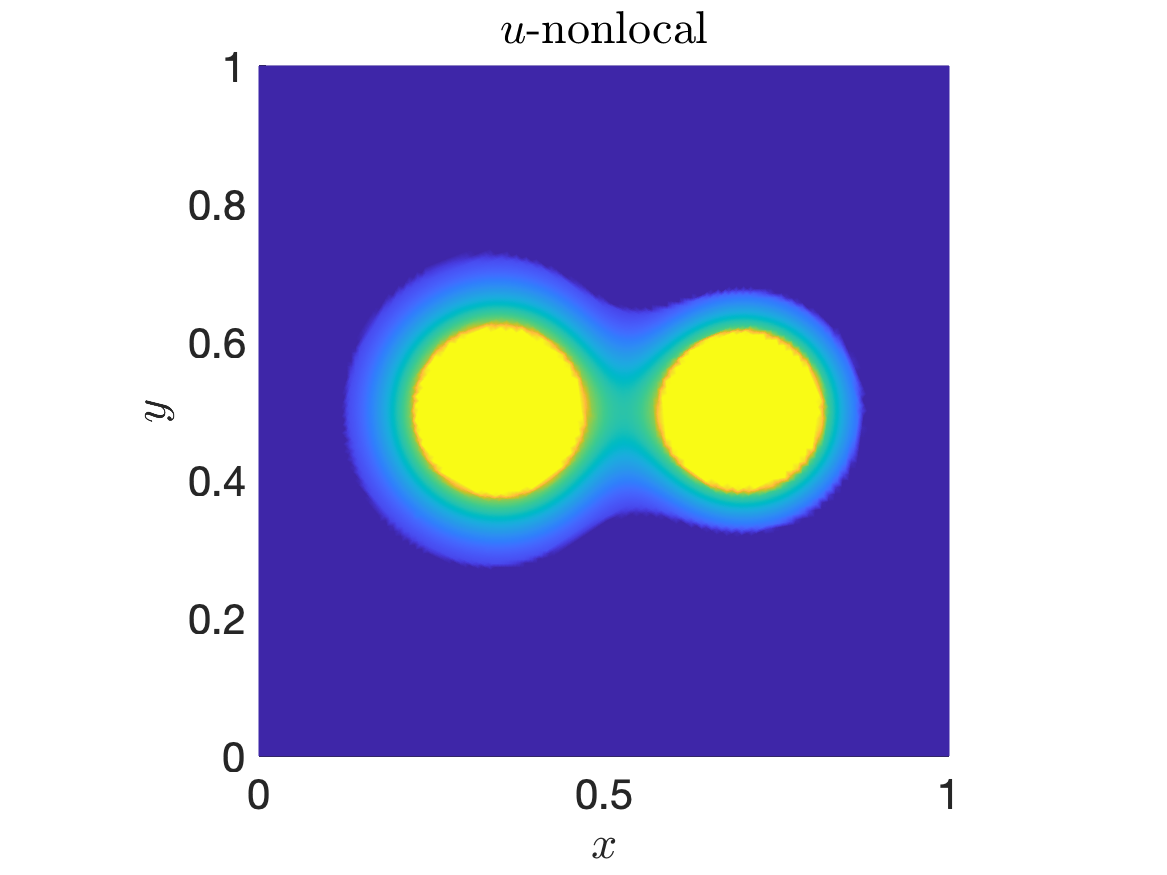}
 \end{subfigure}
 \begin{subfigure}{.22\textwidth}
  \centering
 \includegraphics[width=1.2\textwidth]{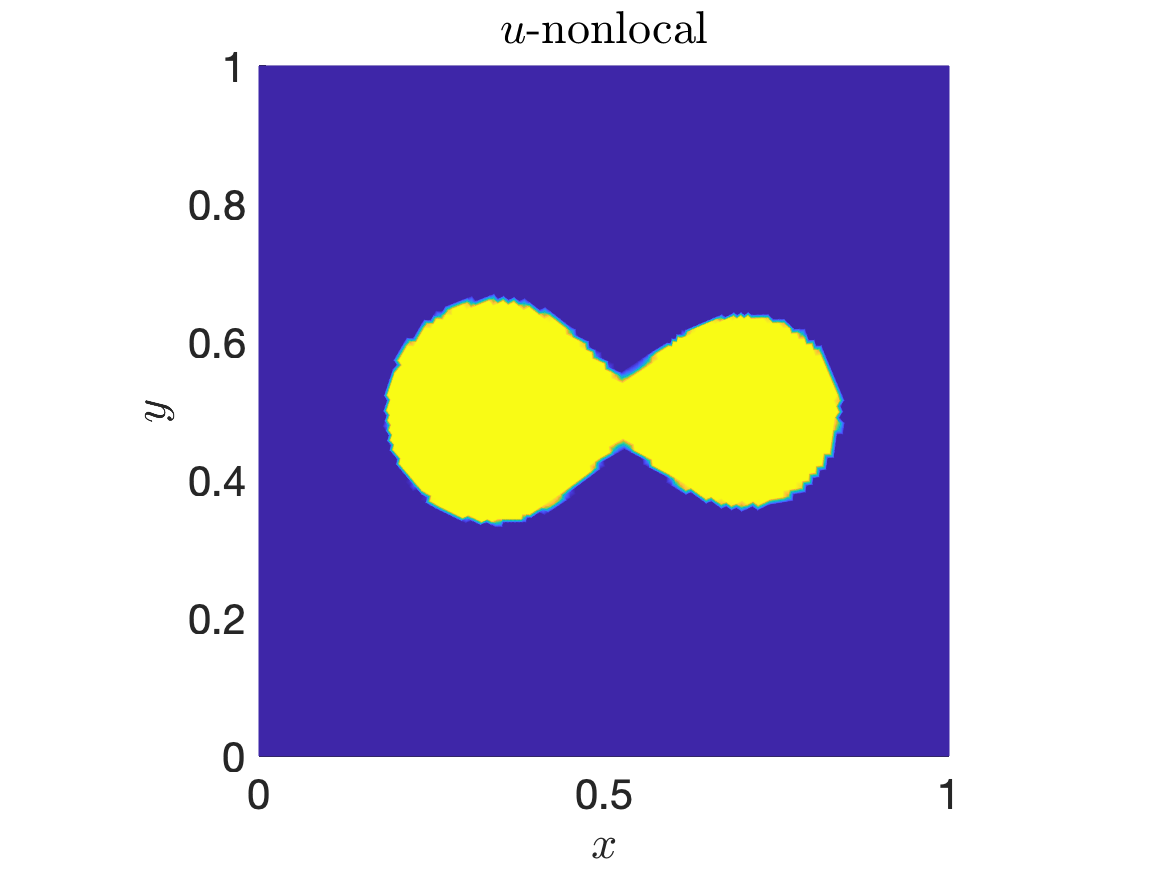}
 \end{subfigure}
 \begin{subfigure}{.22\textwidth}
  \centering
 \includegraphics[width=1.2\textwidth]{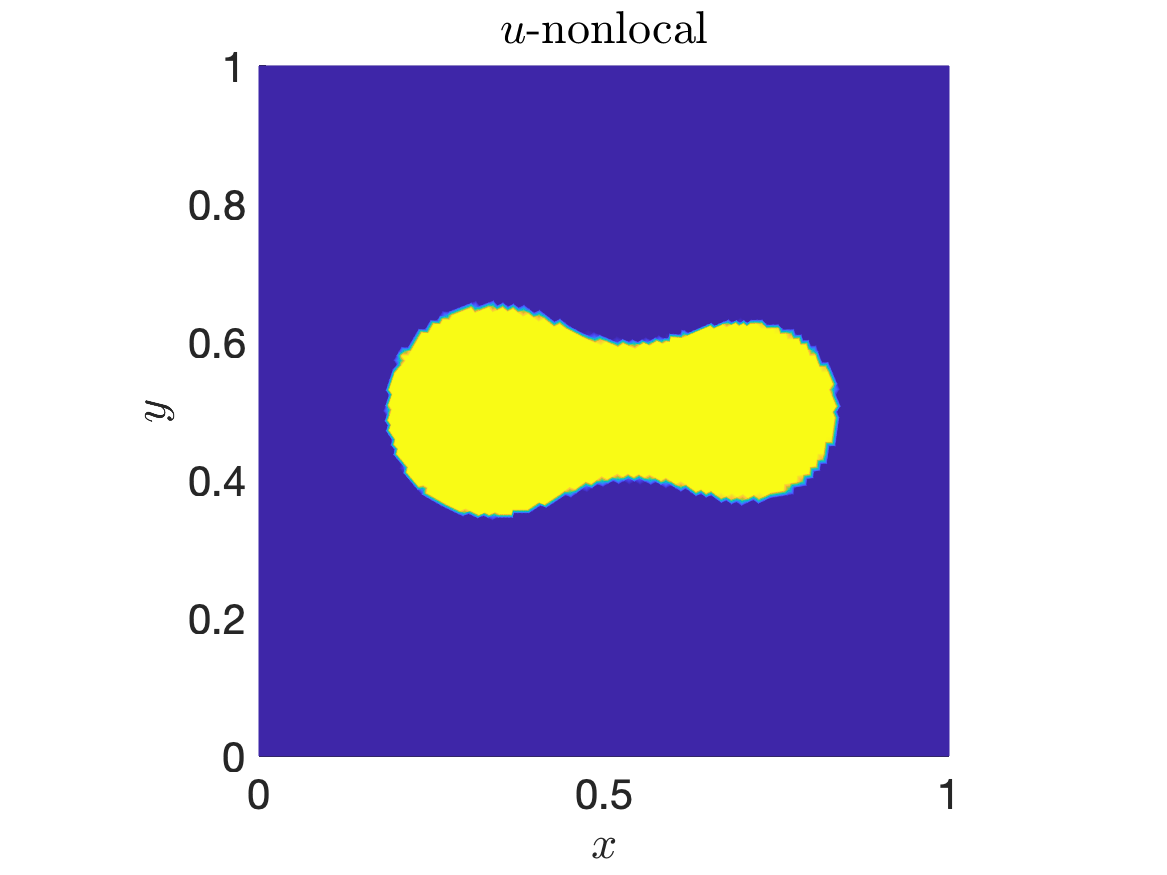}
 \end{subfigure}
  \begin{subfigure}{.22\textwidth}
  \centering
 \includegraphics[width=1.2\textwidth]{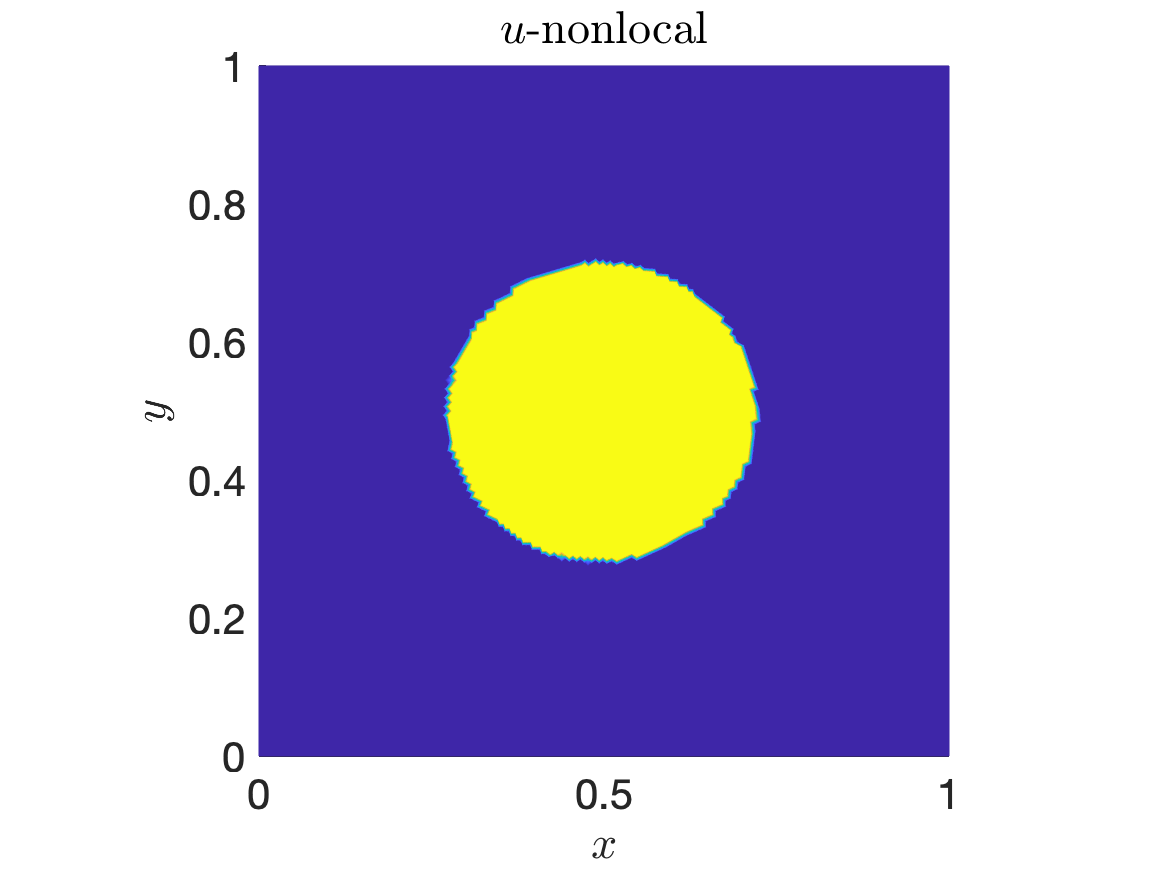}
 \end{subfigure}
 \caption{Evolution of the nonlocal ({\it Case 1}) (bottom) and local (top) solutions of the Cahn-Hilliard variational inequality at different time instances and for Example~2a. From left to right: $t=0.004, 0.009, 0.0225, 1$.}
 \label{fig:2a}
\end{figure}
}

\subsection*{Example 2b}
Now, let $\D=(0,1)^2$ and consider the ``Neumann'' type nonlocal operator $B$, defined as in \textit{Case~1}, where we set $T=2$, $K=1000$, $N=43073$, $\delta=0.1$, $\xi=0.07$, $\cpot=1$, $\varepsilon^2=0.0003$. The initial condition $u_0$ is chosen as $u_0(\x)=\tau(\x)$, where $\tau(\x)$ is drawn from a uniform random distribution on $[-1,1]$ at each grid point. 
In Figure~\ref{fig:2} we plot the snapshots of the local and nonlocal solutions at different time-steps. 
\begin{figure}[ht!]
\begin{subfigure}{.3\textwidth}
  \centering
  \includegraphics[width=\textwidth]{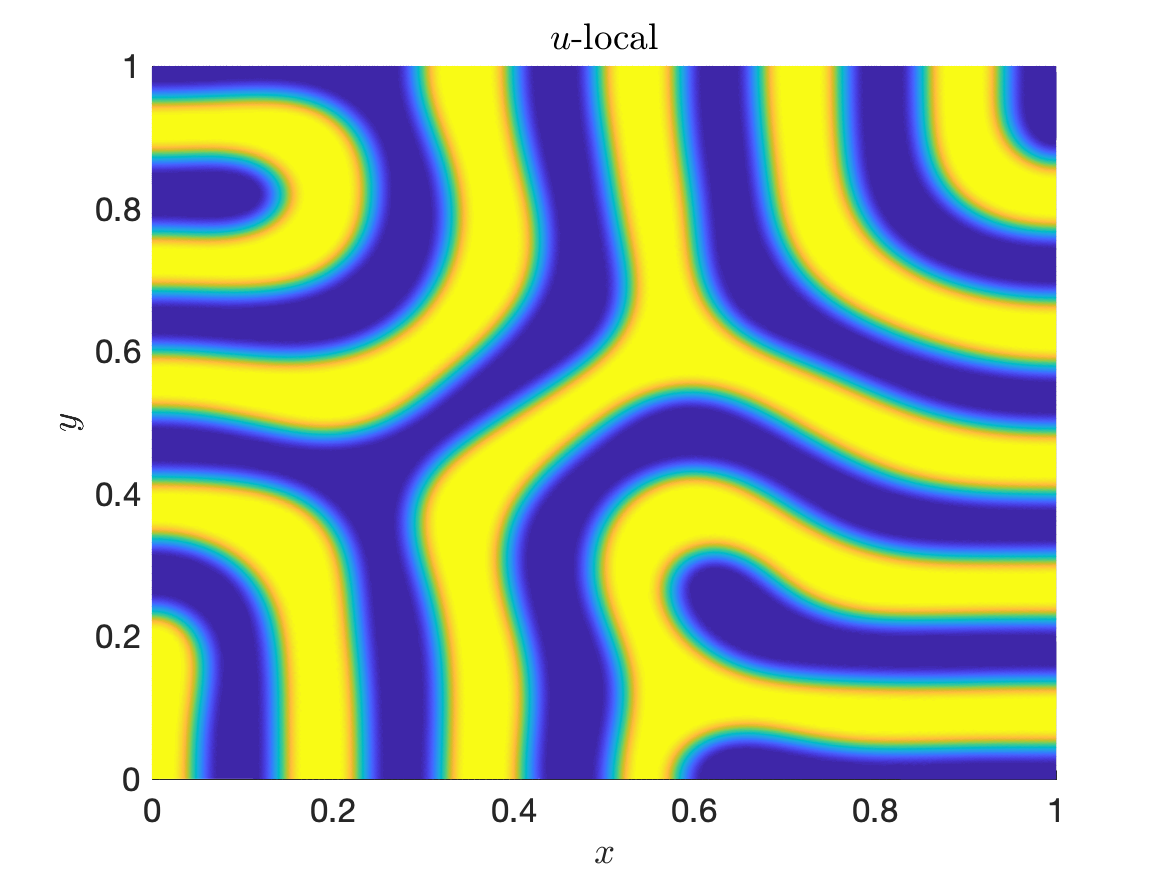}
  \caption*{$t=0.01$}
 \end{subfigure}
 \begin{subfigure}{.3\textwidth}
  \centering
\includegraphics[width=\textwidth]{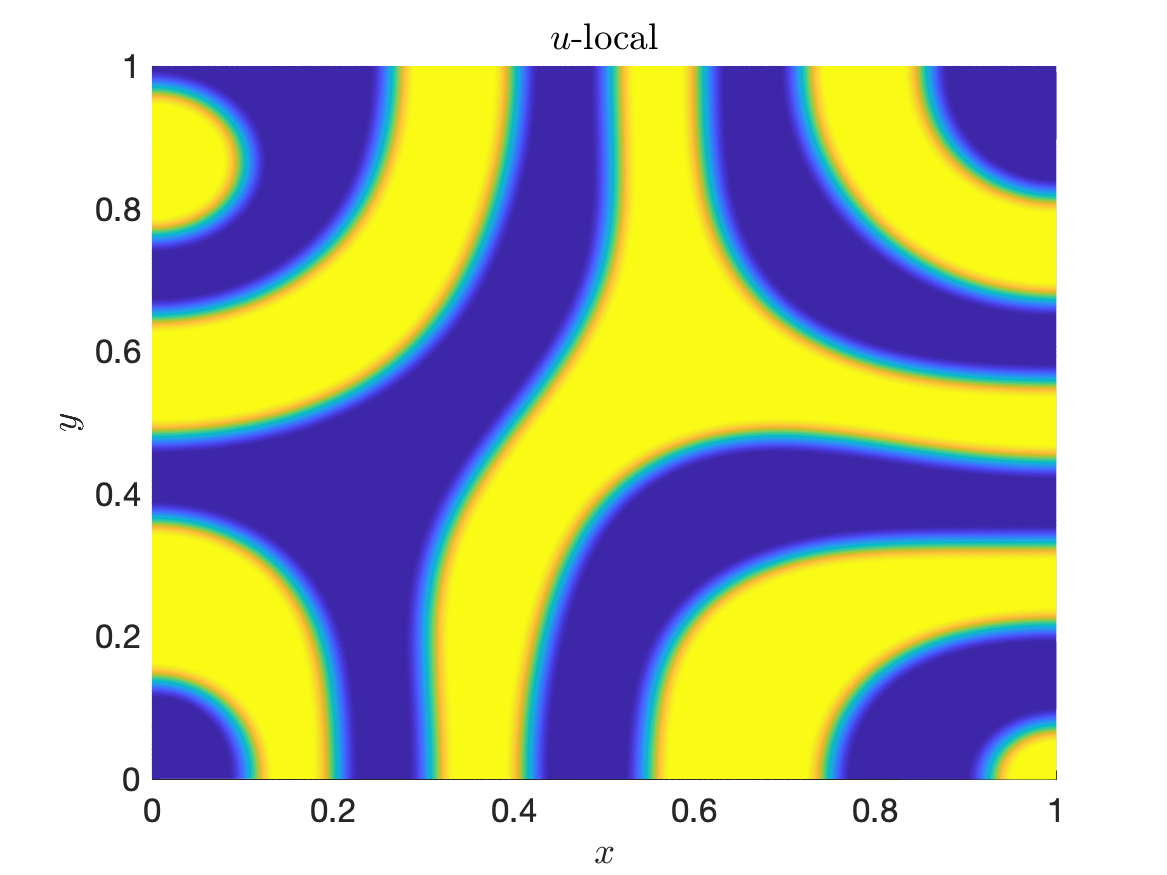}
  \caption*{$t=0.1$}
 \end{subfigure}
 \begin{subfigure}{.3\textwidth}
  \centering
\includegraphics[width=\textwidth]{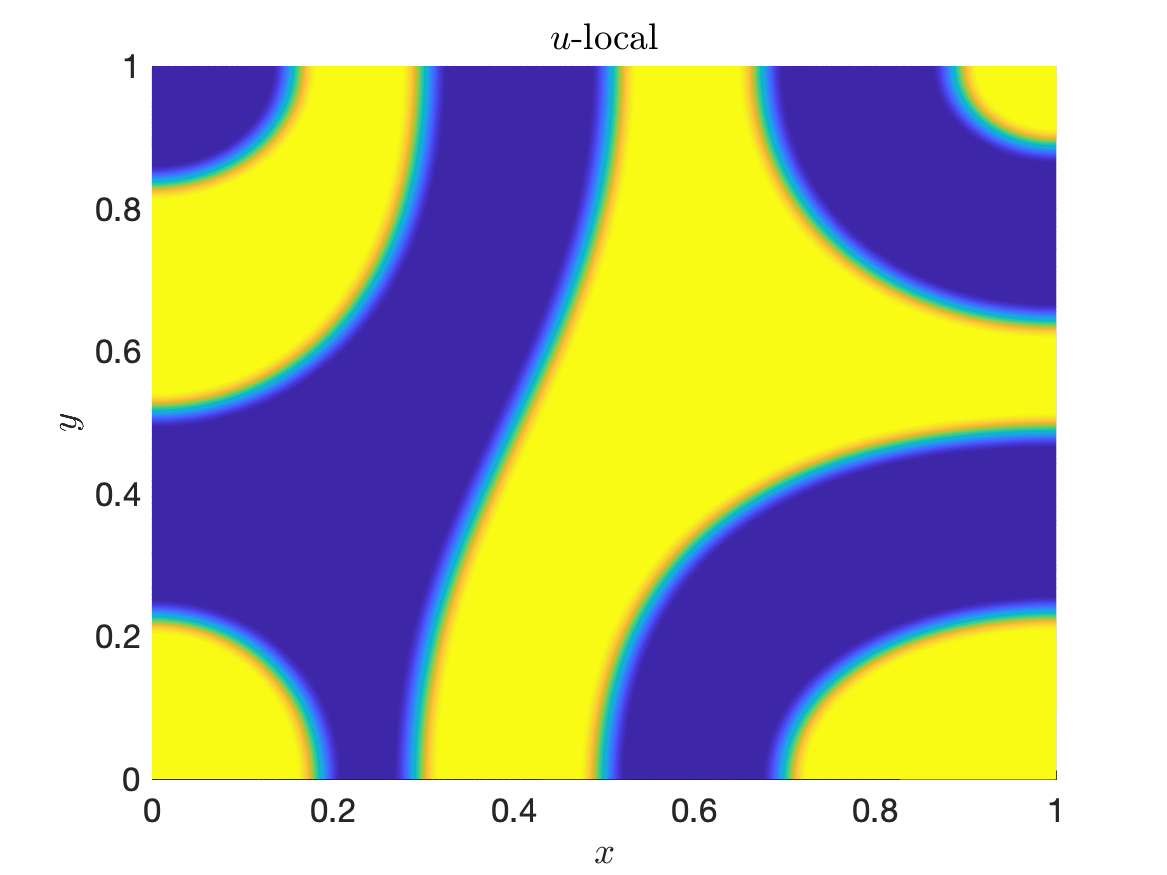}
  \caption*{$t=0.4$}
 \end{subfigure}\\
 \begin{subfigure}{.3\textwidth}
  \centering
 \includegraphics[width=\textwidth]{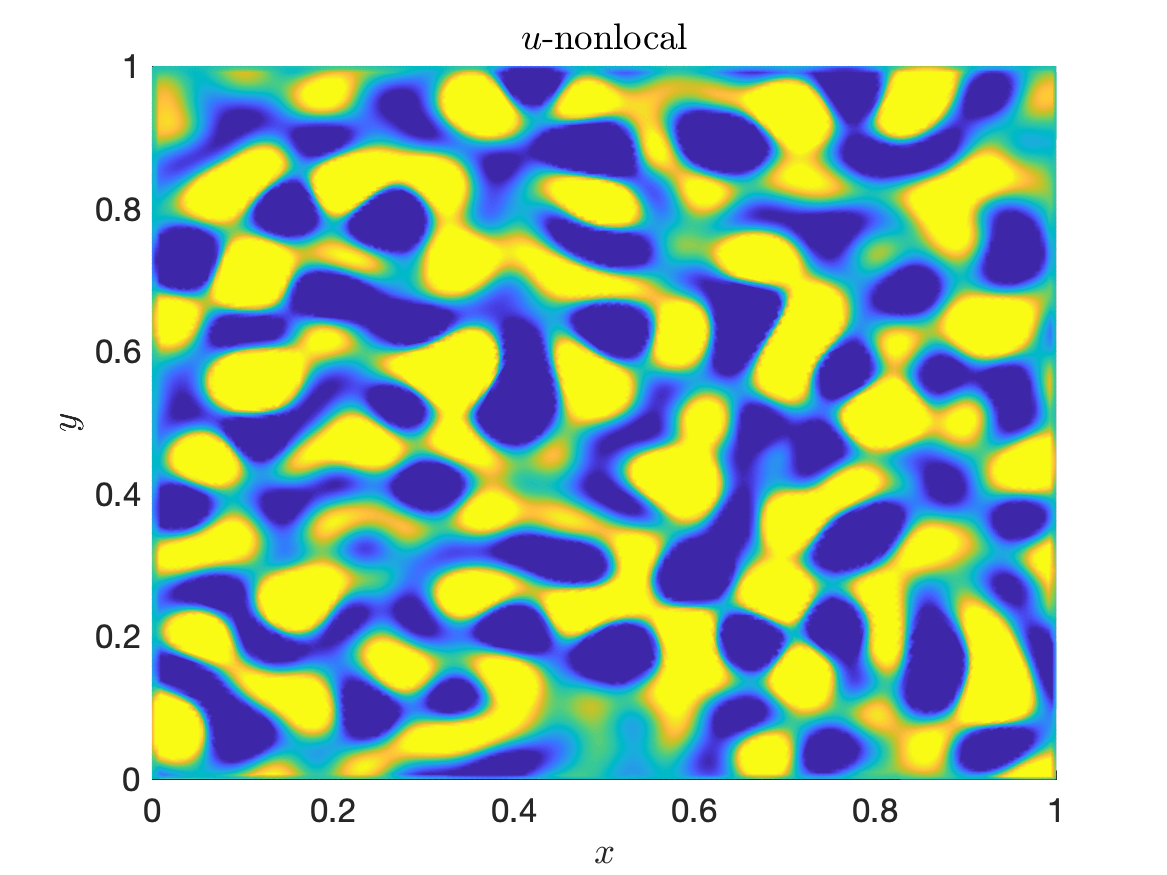}
  \caption*{$t=0.01$}
 \end{subfigure}
 \begin{subfigure}{.3\textwidth}
  \centering
 \includegraphics[width=\textwidth]{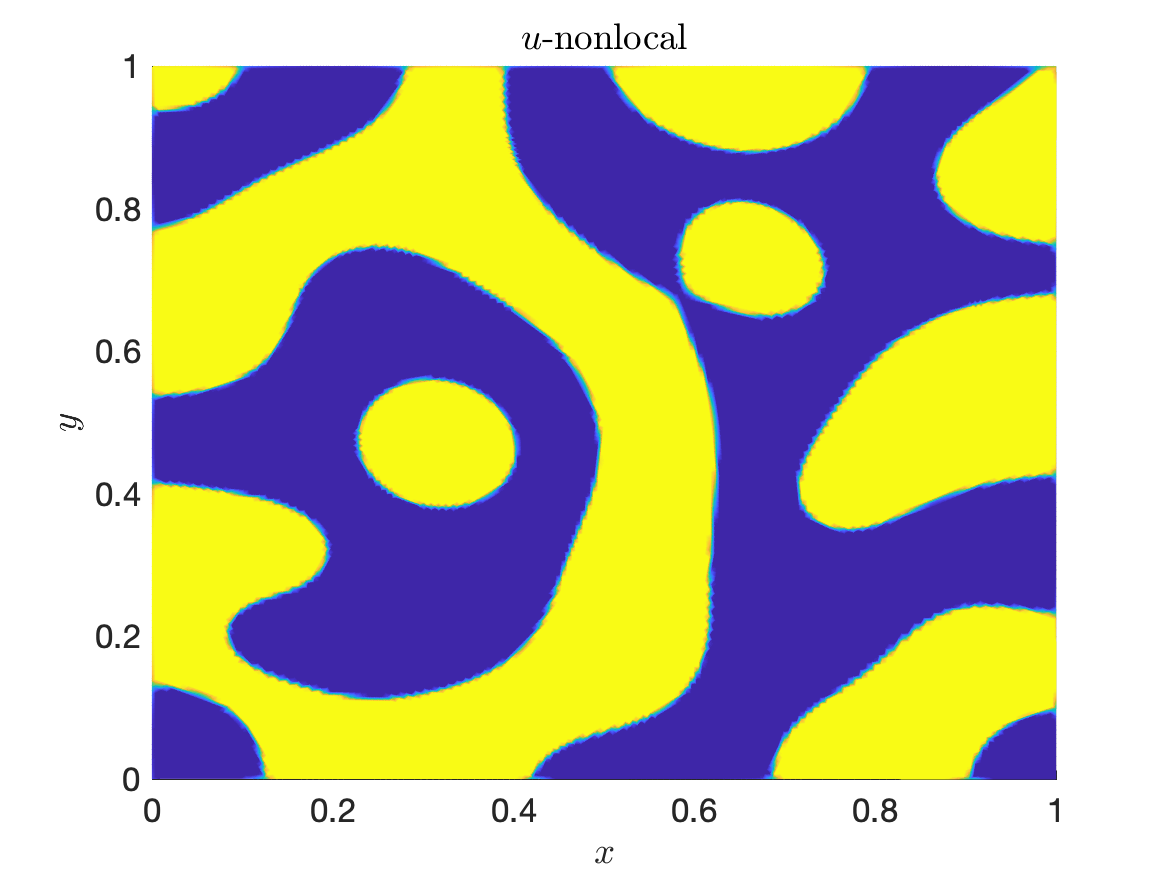}
  \caption*{$t=0.1$}
 \end{subfigure}
 \begin{subfigure}{.3\textwidth}
  \centering
 \includegraphics[width=\textwidth]{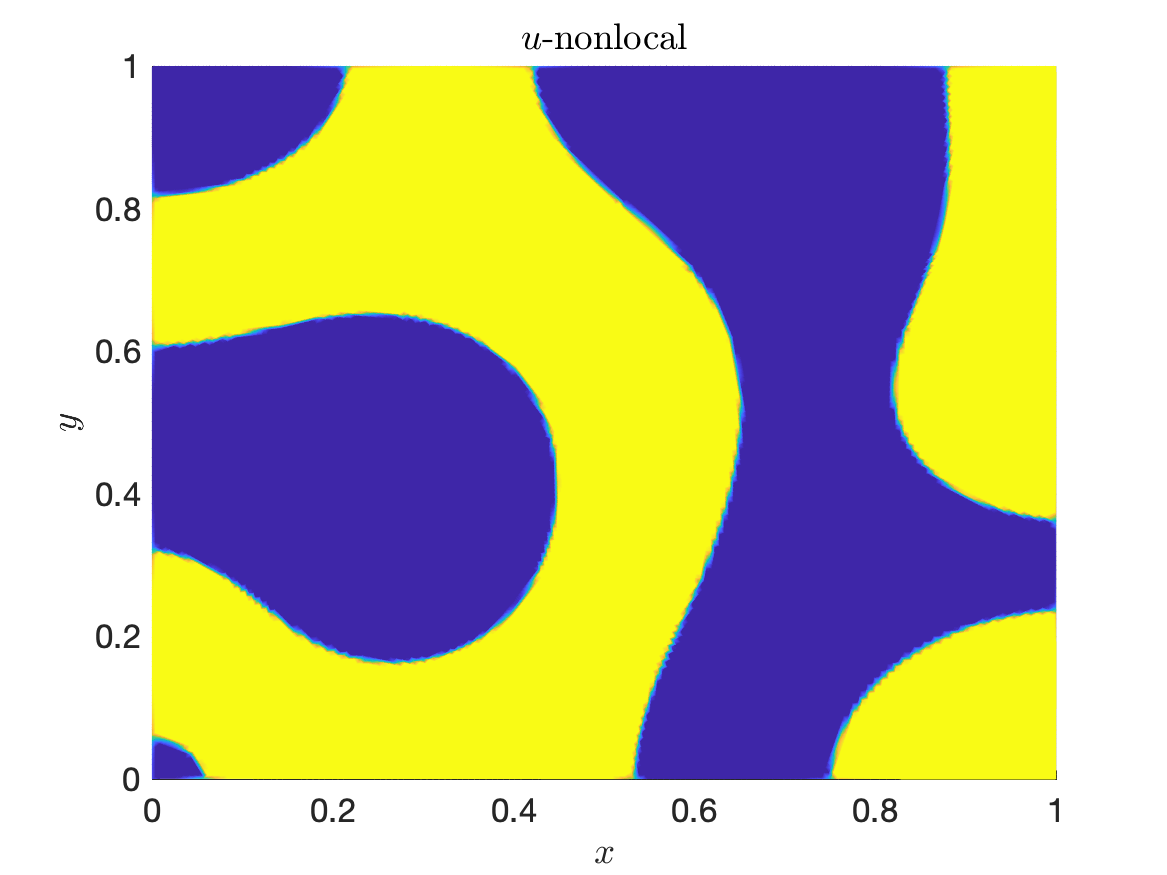}
  \caption*{$t=0.4$}
 \end{subfigure}
 \caption{Evolution of the nonlocal ({\it Case 1}) (bottom) and local (top) solutions of the Cahn-Hilliard variational inequality at different time instances for Example~2. }
 \label{fig:2}
\end{figure}
{From these plots, we observe that similarly as in the previous example, the proposed nonlocal model for \textit{Case~1} delivers sharp interfaces in the solution {whereas the local model results in diffuse interfaces}. We also notice that due the random initial condition, the patterns in the local and nonlocal solutions in the intermediate time steps differ significantly in contrast to Example~2a for which a deterministic initial condition has been used. This is due to the random and non-smooth initial condition, which has the effect that changes in the model can lead to large changes in the final solution. } 

\subsection*{Example 3}
{Next, similarly as in Example~1c, we investigate the case of the ``regional'' nonolocal operator, given as in \textit{Case~2}. }Again, we consider $\cpot=\cpot(\x)$ is a spatially dependent coefficient, such that $\xi(\x)=\cker(\x)-\cpot(x)$ is close to zero throughout a whole domain $\D$. In particular, we chose $\cpot(x):=0.9\cker(\x)$ and obtain $\param(\x)=0.1\cker(\x)$, which is positive but close to zero in $\D$. 

We set $\D=(0,1)^2$, $N=4225$, $T=1$, $K=100$, $\delta=0.3$, and $\varepsilon^2=0.004$. The initial condition $u_0$ is chosen as $u_0(\x)=\tau(\x)$, where $\tau(\x)$ is drawn from a uniform random distribution on $[-1,1]$ at each grid point.
In Figure~\ref{fig:4} we plot the nonlocal and local solutions at different time instances.  
{In this case, similarly as before, by means of modifying the double-well potential we could achieve sharper interfaces in the nonlocal solution for the ``regional'' nonlocal operator compared to the local case. This corresponds to the fact that $\param(\x)$ is very small, much smaller than $\cker(\x)$.  }
\begin{figure}[ht!]
\begin{subfigure}{.3\textwidth}
  \centering
  \includegraphics[width=\textwidth]{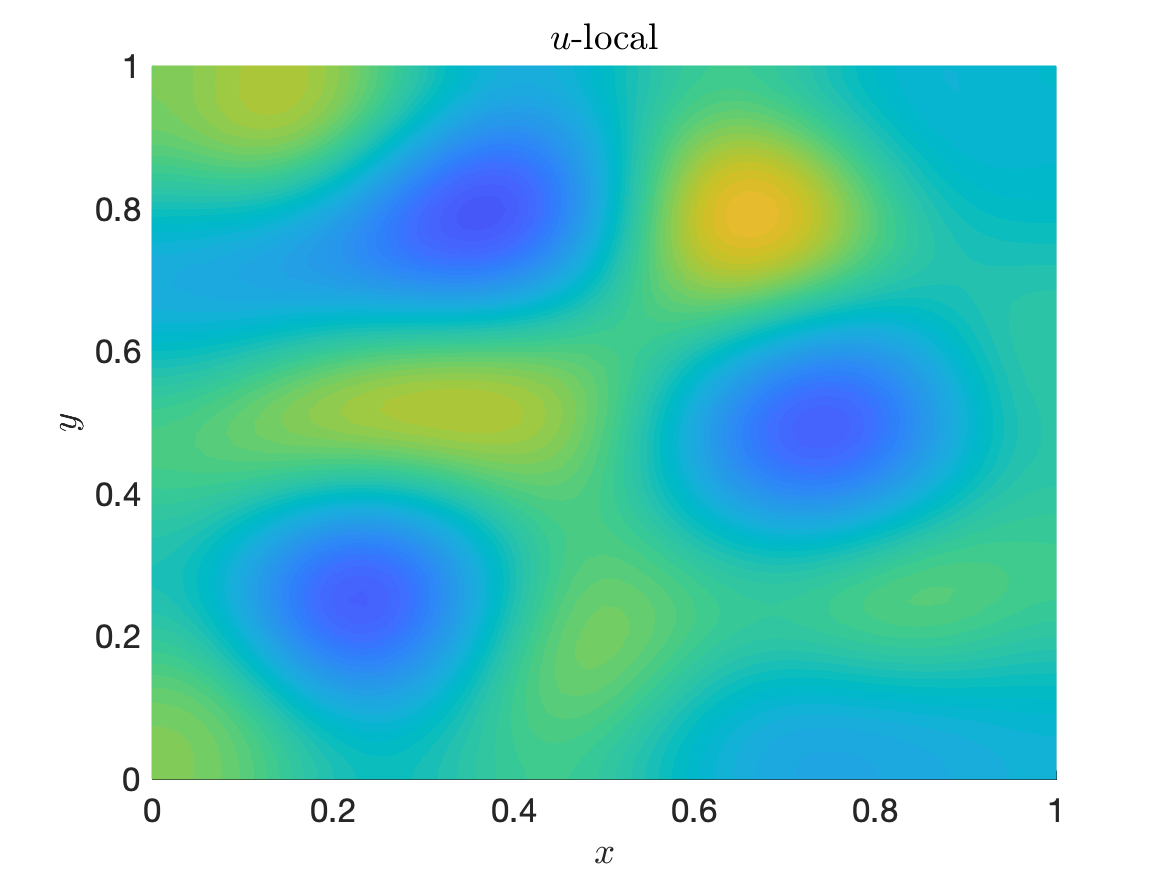}
  \caption*{$t=0.02$}
 \end{subfigure}
 \begin{subfigure}{.3\textwidth}
  \centering
\includegraphics[width=\textwidth]{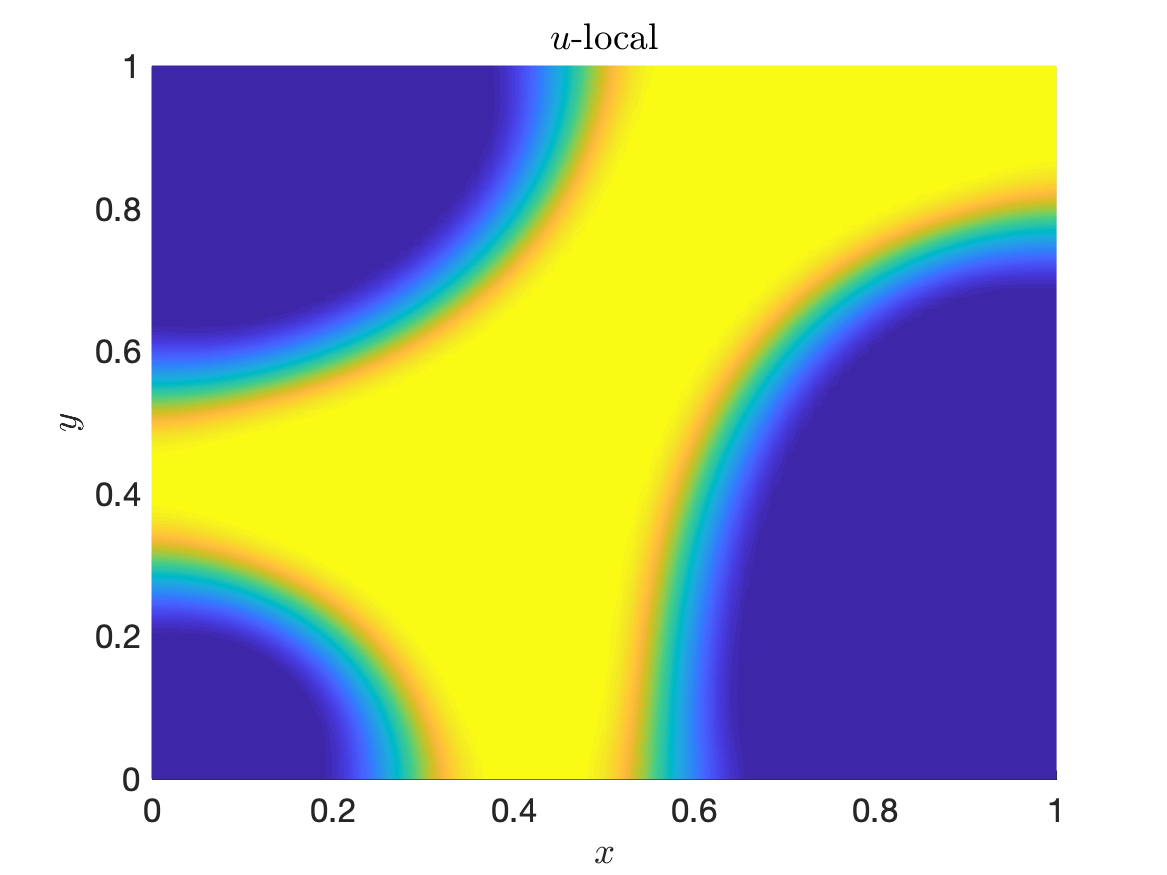}
  \caption*{$t=0.5$}
 \end{subfigure}
 \begin{subfigure}{.3\textwidth}
  \centering
\includegraphics[width=\textwidth]{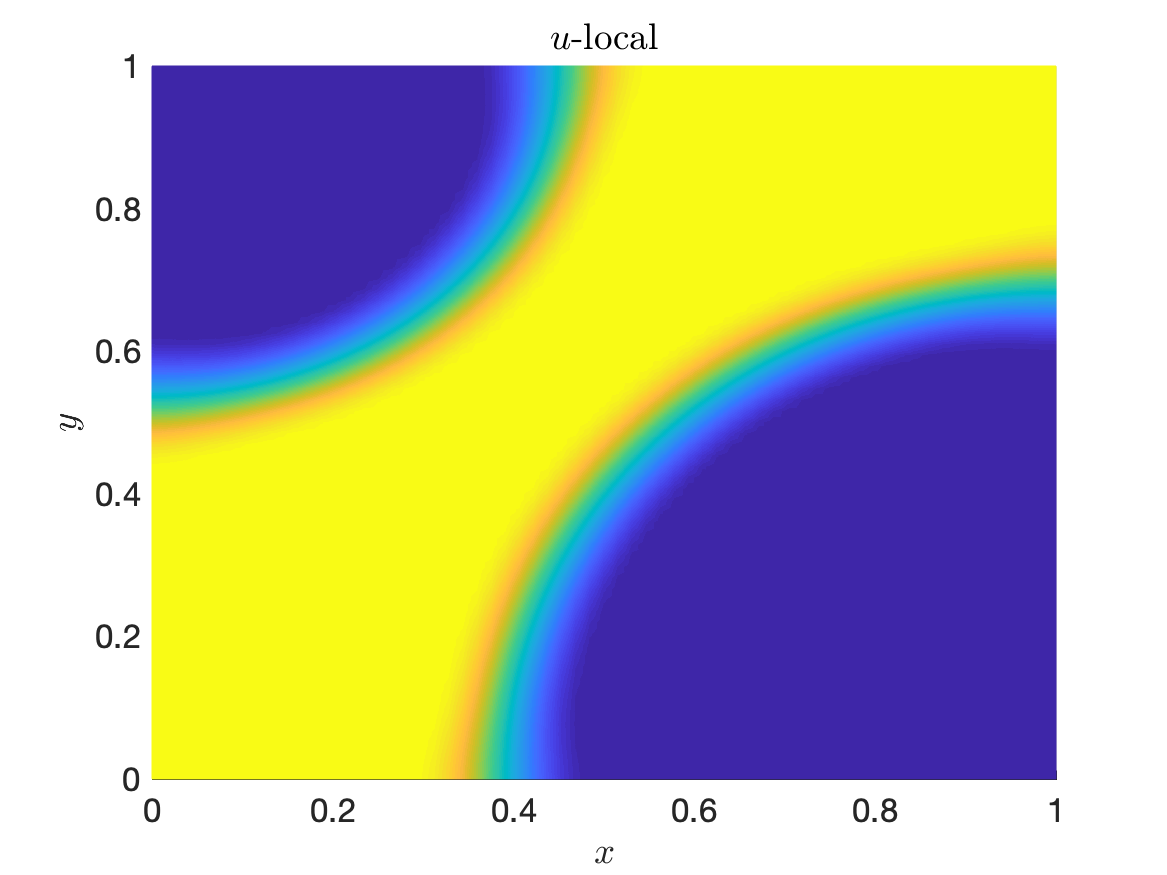}
  \caption*{$t=1$}
 \end{subfigure}\\
 \begin{subfigure}{.3\textwidth}
  \centering
 \includegraphics[width=\textwidth]{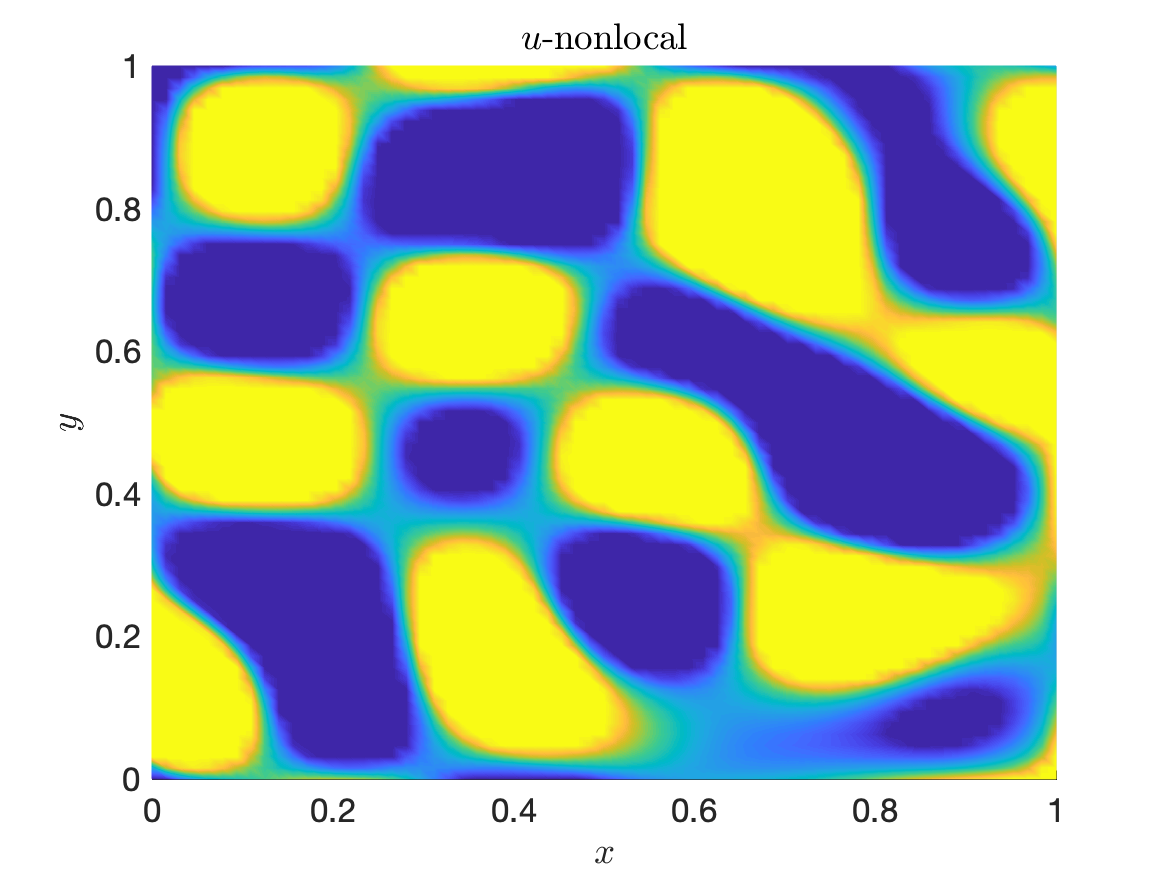}
  \caption*{$t=0.02$}
 \end{subfigure}
 \begin{subfigure}{.3\textwidth}
  \centering
 \includegraphics[width=\textwidth]{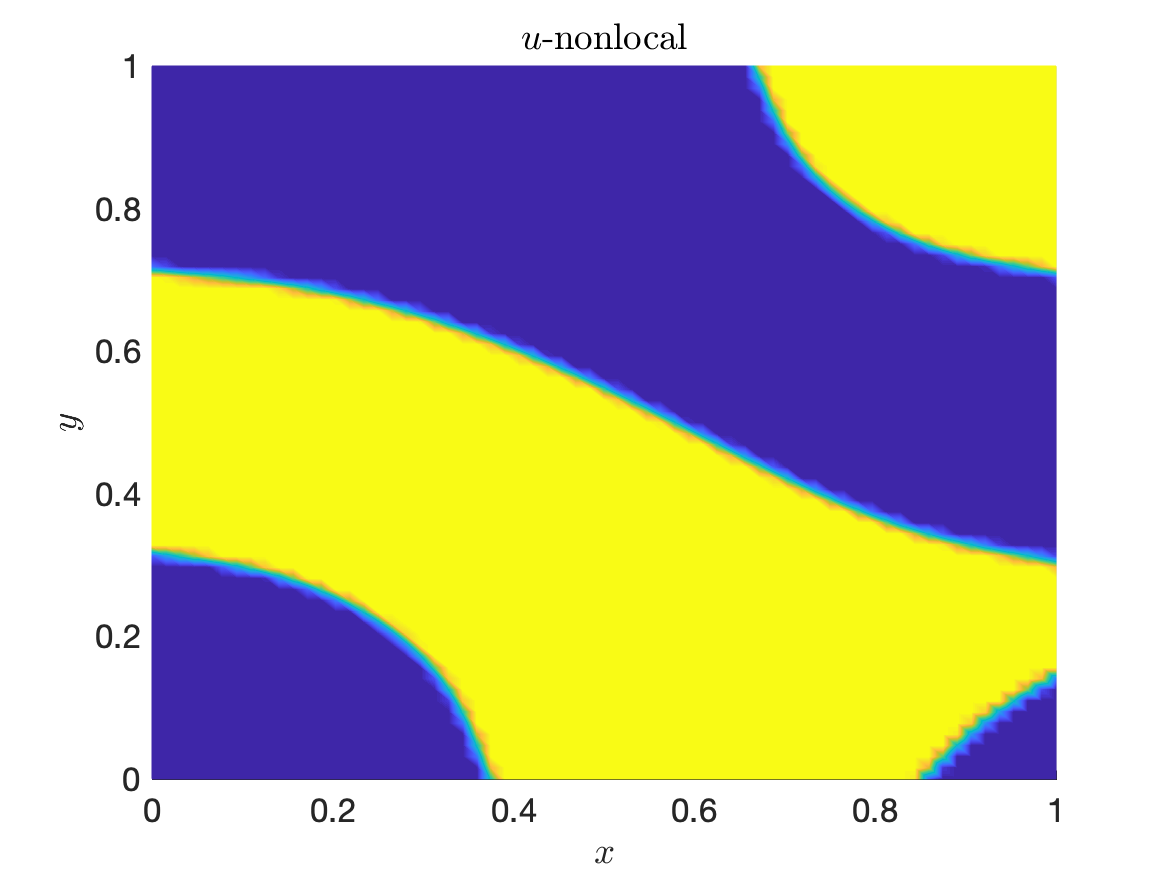}
  \caption*{$t=0.5$}
 \end{subfigure}
 \begin{subfigure}{.3\textwidth}
  \centering
 \includegraphics[width=\textwidth]{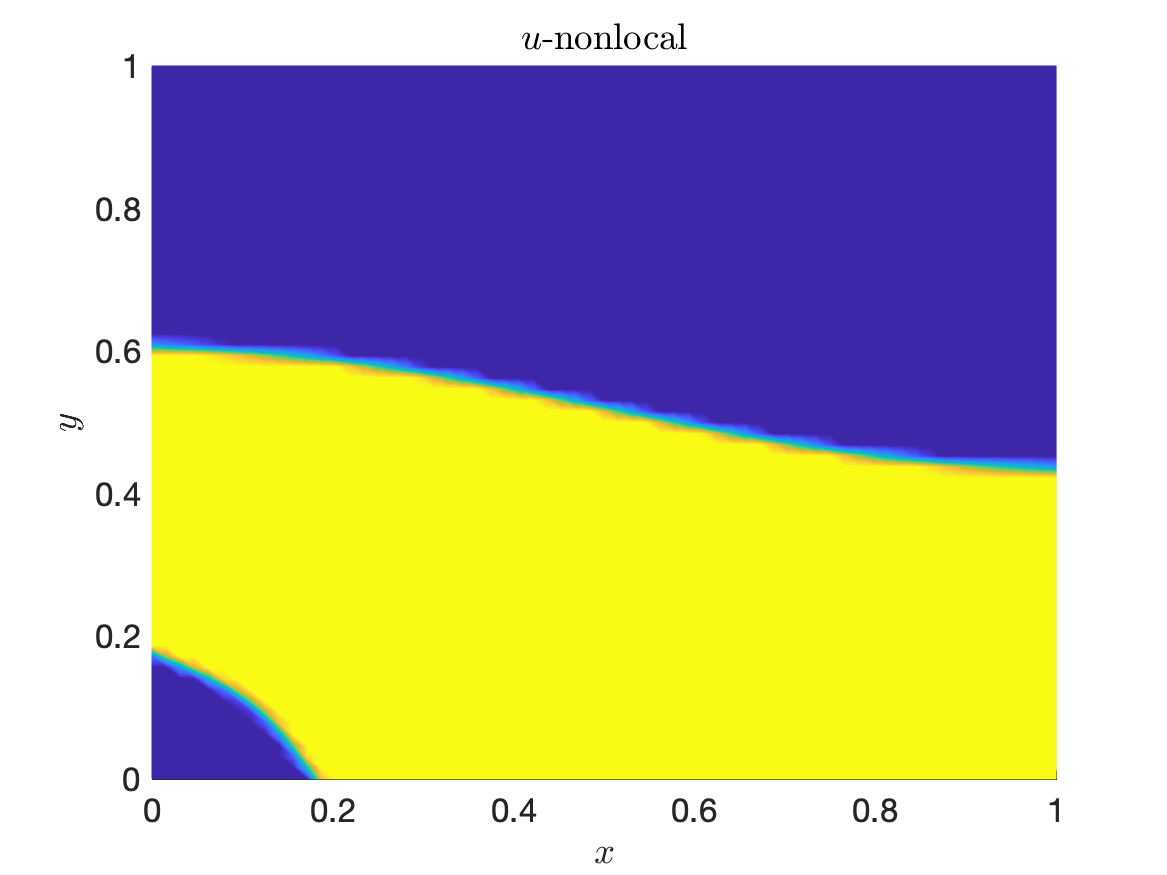}
  \caption*{$t=1$}
 \end{subfigure}
 \caption{Evolution of the nonlocal ({\it Case 2}) (bottom) and local (top) solutions of the Cahn-Hilliard variational inequality at different time instances and with $\cpot(\x)=0.9\varepsilon^2\cker(\x)$ for Example~3.}
 \label{fig:4}
\end{figure}

{In summary, we can conclude that the numerical results reported are in agreement with the theoretical developments. Furthermore, the numerical investigations also reveals how the choice of the boundary conditions, model, and kernel parameters affect the sharpness of the interface in the solution. Moreover, we have also demonstrated that the nonlocal solution with sharp interfaces can be stably simulated on a given mesh for arbitrary interface parameter $\xi\geq 0$ without tying  the discretization parameters to the interface width (including the case $\xi=0$, which corresponds to a sharp interface on an analytic level).   }

\section{Concluding remarks}\label{sec:conclusion}

In this work we have presented a nonlocal Cahn-Hilliard model that permits solutions to achieve only pure phases. We have performed a detailed analyses of the well-posedness of the problem as well as for the regularity of solutions. We have also provided an efficient discretization scheme, based on finite elements and implicit/explicit time-stepping schemes, that are used in several numerical experiments that illustrate the theoretical results. 

Further study of efficient discretization and approximation techniques for the model based on, e.g., explicit-implicit, cf.~\eqref{eq:explicit_bh}, or higher-order time-marching schemes, or even model order reduction approaches, are interesting questions {for future investigation}.

{We also note that local phase-field models are often used as a substitute of the sharp-interface models in applications such as, e.g., solidification. The obvious advantage of using a phase-field model is that it avoids explicit tracking of the interface. However, the interface width in the corresponding local discrete models is limited by the mesh discretization, which can lead to requiring excessively fine meshes in implementations and thus to a high computational cost. The nonlocal interface model, when compared to the local one in this context, could offer much greater flexibility and a promising alternative. Here, we do not need to compromise on the sharpness of the interface, which can be chosen independently of the mesh width, and allow for higher fidelity simulations on coarser grids. 
}

\end{document}